\newtheorem{thm}{Theorem}[section]
\newtheorem{defn}[thm]{Definition}
\newtheorem{prop}[thm]{Proposition}
\newtheorem{lemma}[thm]{Lemma}
\newtheorem{cor}[thm]{Corollary}
\newtheorem{assm}[thm]{Assumption}
\newtheoremstyle{boldremark}
    {\dimexpr\topsep/2\relax} 
    {\dimexpr\topsep/2\relax} 
  {}          
    {}          
    {\bfseries} 
    {.}         
    {.5em}      
    {}          
\theoremstyle{boldremark}
\newtheorem{remark}[thm]{Remark}
\newcommand{\N}{{\ensuremath{\mathbb{N}}}}
\newcommand{\R}{{\ensuremath{\mathbb{R}}}}
\newcommand{\longsquiggly}{\xymatrix@C=1.5em{{}\ar@{~>}[r]&{}}}
\newcommand{\spt}{{\ensuremath{\textrm{spt}}}}
\newcommand{\wassspaceonM}{{\ensuremath{\mathscr{P}(M)}}}
\newcommand{\wassspaceonGammaK}{{\ensuremath{\mathscr{P}(\Gamma^K)}}}
\newcommand{\wassspaceonOmega}{{\ensuremath{\mathscr{P}(\bar{\Omega})}}}
\newcommand{\wassspaceonGammamKzero}{{\ensuremath{\mathscr{P}_{m_0}(\Gamma^K)}}}
\newcommand{\eps}{\varepsilon}
\newcommand{\s}[1]{{\mathcal #1}}
\newcommand{\bb}[1]{{\mathbb #1}}
\DeclarePairedDelimiter{\ceil}{\lceil}{\rceil}
\newcounter{step}
\newcommand{\firststep}{\setcounter{step}{1}\textbf{Step \arabic{step}:} }
\newcommand{\nextstep}{\stepcounter{step}\textbf{Step \arabic{step}:} }
\newcounter{case}
\newcommand{\firstcase}{\setcounter{case}{1}\textbf{\textit{Case \arabic{case}:}} }
\newcommand{\nextcase}{\stepcounter{case}\textbf{\textit{Case \arabic{case}:}} }
\title{A note on deterministic mean field games of controls with state constraints: existence of mild solutions}
\author{Jameson Graber\footnote{Department of Mathematics, Baylor University. \href{mailto:Jameson_Graber@baylor.edu}{Jameson\_Graber@baylor.edu}} 
\and Sergio Mayorga\footnote{Euler International Mathematics Institute, Saint Petersburg. \href{mailto:mayorga@eulerinstitute.ru}{mayorga@eulerinstitute.ru}}}
\begin{document}

\maketitle 

\begin{abstract}
We show the existence of \emph{mild solutions} for a first-order mean field game of controls under the state constraint that trajectories be confined in a closed and bounded set in euclidean space. This extends the results of \cite{cannarsacapuani1} to the case of a mean field game of controls. Our controls are velocities and we find that the existence of an equilibrium is complicated by the requirement that they should have enough regularity. We solve this by imposing a small Lipschitz constant on the dependence of the Lagrangian on the joint measure of states and controls, and showing that regular paths can be approximated within the same class of functions despite the constraint.
\end{abstract}

{\small \textbf{MSC:} 28B20, 49J15, 49N70, 35Q91, 91A13, 91A23.}

{\small \textbf{Keywords:} mean field games, differential games, optimal control, calculus of variations, fixed point, Nash equilibrium.} 


\section{Introduction}

Mean field games have been introduced to study large-scale interactions between rational agents \cite{lionsjapanese,huang2006large}. 
In a mean field game, one assumes a population of anonymous identical players.
A representative player solves an optimal control problem depending on a flow of measures $(\nu_t)_{0 \leq t \leq T}$, e.g.
\begin{equation}\label{eq:optimal control}
	\inf_{x(\cdot)} \cbr{\int_0^T l(t,x(t),\dot x(t),\nu_t)\dif t + l_T(x(T),\nu_T)}
\end{equation}
for a given running cost $l$ and final cost $l_T$.
Here and throughout this article the game is deterministic, i.e.~the players' dynamics are not subject to noise.
In most of the literature on mean field games, $\nu_t = m_t$ is a measure only on the state space.
In this case a \emph{Nash equilibrium} occurs when $m_t$ is precisely the distribution of states given that every player follows optimal trajectories of \eqref{eq:optimal control}.
One then derives the following system of partial differential equations for the equilibrium:
\begin{equation}\label{eq:classic MFG}
	\begin{cases}
		-\partial_t u + h(t,x,\nabla u,m_t) = 0, & u(x,T) = l_T(x,m_T),\\
		\partial_t m_t - \nabla \cdot (m_t \nabla_p h(t,x,\nabla u,m_t)) = 0, & m_t|_{t = 0} = m_0,
	\end{cases}
\end{equation} 
where $h(t,x,p,m)$ is the Legendre transform of $l(t,x,v,m)$ with respect to $v$.
The first equation in System \eqref{eq:classic MFG} is a backward-in-time Hamilton-Jacobi equation with terminal condition $l_T$ that depends on $\mu_T$, and the second is a forward-in-time continuity equation with given initial condition $m_0$.
System \eqref{eq:classic MFG} is now commonplace in mean field game theory, especially in the case where $h$ can be written in the form $h(t,x,p,m) = H(x,p) - F(x,m)$.
See \cite{cardaliaguetnotes,cardaliaguetgraber,cardaliaguet2015weak,grabermeszaros} for results on the well-posedness of such systems.

A natural extension is to consider $\nu_t$ a measure on the Cartesian product of the state space with the space of controls.
In this article we will take controls to be velocities.
Then a Nash equilibrium occurs when $\nu_t$ is precisely the distribution of all possible values $(x(t),\dot x(t))$ for optimal trajectories of \eqref{eq:optimal control}.
In this case we have a \emph{mean field game of controls} \cite{cardaliaguettradecrowding} (also called \emph{extended mean field game} \cite{gomes2014extended,gomesvoskanyanextended}).
The corresponding system of partial differential equations now reads
\begin{equation*} 
\label{eq:MFGC}
	\begin{cases}
		-\partial_t u + h(t,x,\nabla u,\nu_t) = 0, & u(x,T) = l_T(x,\nu_T),\\
		\partial_t m_t - \nabla \cdot (m_t \nabla_p h(t,x,\nabla u,\nu_t)) = 0, & m_t|_{t = 0} = m_0,\\
		\del{I,-\nabla_p h(t,\cdot,\nabla u,\nu_t)}_{\#} m_t = \nu_t,
	\end{cases}
\end{equation*} 
where $I$ is the identity mapping and $\#$ is the push-forward operator; thus the third equation in \eqref{eq:MFGC} means that $\nu_t$ is the push-forward of $m_t$ through the map $x \mapsto (x,-\nabla_p h(t,x,\nabla u(x,t),\nu_t))$.
Cardaliaguet and Lehalle give an abstract result on the well-posedness of System \eqref{eq:MFGC} over $\bb{R}^d \times [0,T]$ in \cite{cardaliaguettradecrowding} (see also \cite{gomes2014extended,gomesvoskanyanextended,grabermullenixweak}). 

In this paper we consider mean field games of controls with state constraints, i.e.~where the trajectory $x(t)$ must remain within some given closed subset of $\bb{R}^d$. 
For the more common case where $\nu_t = m_t$ and $h(t,x,p,m) = H(x,p) - F(x,m)$, a full analysis of first-order mean field games with state constraints has been completed by Cannarsa, Capuani, and Cardaliaguet \cite{cannarsacapuani1,cannarsaconstcalc,cannarsacapuani2}.
Their program has three steps.
The first is to prove the existence of ``mild solutions'' to the mean field game with state constraints, which is motivated in the following way.
For sufficiently smooth Hamiltonians and terminal conditions, viscosity solutions of the Hamilton-Jacobi equation without state constraints automatically possess $\s{C}^{1,1}$ regularity.
This means that the optimal feedback control $-\nabla_p H(x,\nabla u)$ is defined on a sufficiently ``large'' set so that the continuity equation is well-posed.
In the presence of state constraints, it is known that such regularity can fail for Hamilton-Jacobi equations, e.g.~\cite[Section 1.1]{cardaliaguet2006regularity}.
Thus, a different approach to the existence of Nash equilibria is required.
Start with the Lagrangian formulation of the problem, which is taken from \cite{benamousantambrogio}, rewriting the optimal control problem \eqref{eq:optimal control} as depending on a measure $\eta$ on the space of possible paths. By definition, a \emph{mild solution} occurs when $\eta$-a.e.~trajectory is a minimizing trajectory; such an $\eta$ is called a \emph{constrained equilibrium}. A multivalued mapping $\eta\mapsto E(\eta)$ is associated with this definition, to which Cannarsa and Capuani apply the Kakutani fixed point theorem \cite{cannarsacapuani1}, thereby showing the existence of mild solutions.
Once this first step is completed, the next step is to study regularity properties of optimizers to control problems with state constraints \cite{cannarsaconstcalc} and apply the results to the coupled system \cite{cannarsacapuani2}.

Our goal is to follow this same program for mean field games of controls with state constraints.
For the present study we will address only the first step, namely the existence of appropriately defined mild solutions.
In doing so, we encounter some difficulties that did not appear in \cite{cannarsacapuani1}.
We need to come up with an appropriate definition of constrained equilibrium that involves both states and controls, and show that the analogue of the mapping $E$ can be defined from a compact set into itself, and that it is closed.
For compactness, since we are considering measures on both states and velocities, we require a second-order estimate on optimal trajectories; this is in contrast to \cite{cannarsacapuani1}, in which only first-order estimates were needed in the first step.
Fortunately, we are able to apply results from \cite{cannarsaconstcalc} to obtain the desired compactness.
To show that $E$ is closed, we need a result on the approximation of constrained trajectories, which has a completely different flavor from the corresponding result in \cite[Section 3.1]{cannarsacapuani1}.
Our argument, which may have independent interest, has a more geometric flavor; see Section \ref{section:approximationofpaths}.

In \cite{bonnansgianatti}, the authors have also studied a class of mean field games of controls with mixed control-state constraints, using a Lagrangian approach as we do.
In their model, the distributions of controls enters into the cost to players exclusively via a price function, which depends on the average of controls.
They assume a qualification assumption (Hypothesis (H5)) on the controls that allows them to prove the regularity of minimizers to the control problem, followed by well-posedness of the price function as a fixed point.
By contrast, we will assume no restriction on the controls, but only on the states.
Regularity of optimizers will come from the results of \cite{cannarsaconstcalc} on Euler-Lagrange equations for constrained optimal control.
Moreover, we assume a more abstract dependence on the distribution of controls, where the Lagrangian is Lipschitz in the variable $\nu$ with respect to a Wasserstein metric; the trade-off is that we make a smallness assumption on the Lipschitz constant (see Section \ref{section:smalldata}).

The motivation for our study is far from purely mathematical.
Indeed, many applications of mean field game theory in economics require mean field games of controls with state constraints \cite{achdouetalmacro,achdou2017income,chan2017fracking}.
We consider the present article an important step toward rigorously establishing the well-posedness of these models. Let us now present a brief overview of the main points in this article.

In section \ref{section:preliminaries} we fix some terminology and borrow a result of \cite{cannarsaconstcalc} on the regularity of the minimizers of the constrained problem for a cost functional with time-dependent Lagrangian $l$ (Theorem
\ref{thm:cardcannarsa}). All the trajectories are to remain in a closed, bounded smooth domain $\bar{\Omega},$ 
the space of states.  Let $\Gamma=\{(\gamma(\cdot) = (x(\cdot),u(\cdot)) \ | \ x(\cdot)\in\Gamma_1, u(\cdot)\in \Gamma_2\}$ be a
set of ``pairs'' of trajectories and controls, on the time interval $[0,T]$; see (\ref{eq:AGamma}) or (\ref{eq:GammaK}) below 
for full definiteness. Suppose $\eta$ is a probability measure on the set $\Gamma,$ and for $\gamma\in \Gamma,$
$e_t(\gamma)=\gamma(t)=(x(t),u(t)),$ with the additional property that the first marginal of $(e_0)_{\#}\eta$ equals a 
measure $m_0$ on $\bar{\Omega}$ fixed beforehand. For any given $x_0\in\bar{\Omega}$, consider the functional
\begin{align*}
I^{x_0}[u(\cdot);\eta] := \int_0^T l (t,x_{x_0}^u(t),u(t),(e_t)_{\#}\eta) dt + l_T(x(T),(e_T)_{\#}\eta)
\end{align*}
where $x_{x_0}^u(\cdot)$ is the trajectory starting at $x_0$ with velocity $u(\cdot).$ This functional can be viewed as acting on $\Gamma_2$, the set of second components of $\Gamma.$  Roughly speaking, for a hopeful definition of equilibrium, we would like the sets of minimizers of this functional in $\Gamma$ to be in the same class as the (second components of the) paths $\gamma$ that ``make up'' the input measure $\eta$ on $\Gamma.$ This
is achieved by Corollary \ref{cor:ifmonesmallenough} below. It comes at the expense of requiring the Lipschitz constant of $l$
and $D_v l$ with respect to its last variable (the measure variable) \emph{sufficiently small}, so at this moment we warn the reader
that the result in this paper is valid under this ``small data'' condition in regards of the measure variable; this restriction
does not appear in the case with no state constraints \cite{cardaliaguettradecrowding}. Having guaranteed that data $\eta$ in the space of probability measures on the suitable set $\Gamma^K$ produces minimizers with second components in $\Gamma^K_2,$ we proceed to declare $\eta$ a \textit{constrained equilibrium of mean field game of controls} (``cemfgcs'', Definition \ref{defn:cemfgcs} below, inspired directly by \cite{cannarsacapuani1}), if $\eta$-a.e.~$\gamma=(x(\cdot),u(\cdot))\in\Gamma^K$ has the property that $u(\cdot)$ is an optimal control for $I^{x(0)}[ \cdot ; \eta].$ A set-valued mapping $E:\mathscr{P}_{m_0}(\Gamma^K)\leadsto \mathscr{P}_{m_0}(\Gamma^K)$ is associated with this definition (see Remark \ref{remark:verifyequivalence}), and the main task becomes to show that $E$ has a fixed point.Now, to prove that $E$ does not take on empty values requires some work: the multi-set mapping $(\eta,x_0)\mapsto\Gamma^{\eta}[x_0]$, with $\Gamma^{\eta}[x_0]$ the set of optimal controls in $\Gamma^2_K$ for the functional $I^{x_0}[\cdot ; \eta]$ should be closed, which is shown in Lemma \ref{lemma:Gammacetaisclosed}, and it is there where we use Proposition \ref{prop:approximatingpathsinside}, for the following reason, in somewhat vague terms: one wishes to show that a control $u(\cdot)$ with initial state $x_0$ is optimal, provided there is a sequence $x_n\to x_0$ with a corresponding sequence of optimal $u_n(\cdot)$ with initial states $x_n$, and such taht the controls $u_n(\cdot)$ converge to $u(\cdot).$ To compare the cost of the control $u(\cdot)$ with that of some other, arbitrary control $\tilde{u}(\cdot)$ (corresponding to the same initial state $x_0$), one would like to approximate $\tilde{u}(\cdot)$ by a sequence $\tilde{u}^n(\cdot)$ with initial states $x_n,$ use the fact that the cost of $u_n(\cdot)$ is less than the cost of $\tilde{u}^n(\cdot),$ and then apply the continuity of the functional to pass to the limit and conclude that the cost of $u(\cdot)$ is less than the cost of $\tilde{u}(\cdot).$ The consturction of the approximating sequence $\{u_n(\cdot)\}_{1}^{\infty}$ is considerably more involved in our case than in \cite{cannarsacapuani1}, where the velocities paths are allowed to have discontinuities. Thus, we have given Proposition \ref{prop:approximatingpathsinside} its own section. 

The existence of a cemfgcs then follows by the Kakutani-Fan-Glicksberg theorem; see Lemmas \ref{lemma:closedandconvexvalues} and \ref{lemma:existenceofcemfgcs}. This justifies the notion of \emph{mild solution} (Definition \ref{defn:mildsolution} and Theorem \ref{thm:mildsolution}), essentially just saying that we have a value function $V(t,x)$ given as the solution of an optimal control problem that is ``in equilibrium'' with its data $\eta$: the distribution of the set of its optimal pairs $(x(\cdot),u(\cdot))$ is precisely the support of $\eta,$ the distribution of all trajectories and control histories.

\section{Preliminaries}\label{section:preliminaries}
In this section we will fix our notation and the main hypotheses of our result and import a regularity result in calculus of variations \cite{cannarsaconstcalc}.
\subsection{Notation and hypotheses}\label{subsection:notationandhypotheses}
Whenever $\mathcal{S}$ is a complete, separable space, with metric $d_{\mathcal{S}},$ if $\mathscr{B}(\mathcal{S})$ denotes the set of Borel probability measures on $\mathcal{S}$ and $s_0\in \mathcal{S}$ is an arbitrary element, the subset 
\begin{align*}
\mathscr{P}(\mathcal{S}) = \{ \eta \in \mathscr{B}(\mathcal{S}) \ \big| \ \int_{\mathcal{S}} d_{\mathcal{S}}(s,s_0) \eta(s) < \infty \}
\end{align*}
is endowed with the 1-Wasserstein distance
\begin{align*}
\mathbf{d}_{\mathcal{S}}(\eta^1,\eta^2) = \inf\limits_{\boldsymbol\Gamma(\eta^1,\eta^2)} \int_{\mathcal{S}\times\mathcal{S}} d_{\mathcal{S}}(s^1,s^2) \boldsymbol\gamma(ds^1,ds^2),
\end{align*}
where 
\begin{align*}
\boldsymbol\Gamma(\eta^1,\eta^2) = \{ \boldsymbol\gamma\in \mathscr{B}(\mathcal{S}\times\mathcal{S}) \ \big| \ \pi^1_{\#}\boldsymbol\gamma = \eta^1, \ \pi^2_{\#}\boldsymbol\gamma = \eta^2  \}.
\end{align*}
It is known that, if $\mathcal{S}$ happens to be compact, then $\mathscr{P}(\mathcal{S})$ is a compact metric space with the distance $\mathbf{d}_{\mathcal{S}}$; see, for instance, Remark 5.1.5 and Proposition 7.1.5 in \cite{gradientflows}.

If $\eta$ is a measure, $\spt(\eta)$ will denote the support of the measure; every Borel measure on a separable metric space has support \cite{bogachev}.

For the full extent of this paper we fix a $\s{C}^3$ bounded domain in $\R^d,$ i.e.~a bounded, connected open set $\Omega$ in $\R^d$ whose boundary $\partial \Omega$ is a surface of class $\s{C}^3,$ and $X\subset \R^d$ will be a fixed open bounded set including $\bar{\Omega}.$ Let 
\begin{align*}
    d_{\Omega}(x) = \inf\{|x-y| \ \big| \ y\in \Omega\}, \quad d_{\Omega^c}(x) = \inf\{|x-y| \ \big| \ y\notin \Omega\}
\end{align*}
and 
\begin{align*}
    b_{\Omega}(x) = d_{\Omega}(x) - d_{\Omega^c}(x),
\end{align*}
$x\in X;$ $b_{\Omega}$ is called the oriented boundary distance. It can be shown that there exists a positive number $\rho_0,$ such that, if we denote the neighbourhood of $\partial\Omega$ of radius $\rho_0$ by $\Sigma_{\rho_0}$, then $b_{\Omega}$ is of class $\mathcal{C}^2_b$ on $\Sigma_{\rho},$ that is, twice continuously differentiable with all derivatives up to second order bounded.

We fix a number $T>0,$ and a \emph{continuous} function
\begin{align*}
l: [0,T]\times X\times\R^d \times \mathscr{P}(M) & \ \longrightarrow \R
\\
(t,x,v,\nu) & \ \longmapsto l(t,x,v,\nu),
\end{align*}
where 
$$
M := \bar{\Omega}\times\R^d,
$$  
with the following properties:
\begin{enumerate}[(L-i)]
\item \label{L-i} (\textit{bound on $l$ and $Dl$ at $v=0$}) the function $l$ is $C^1$ in $(x,v)$ and there is a constant $n_1>0$ such that 
\begin{align*}
|l(t,x,0,\nu)| + |D_x l(t,x,0,\nu)| + |D_v l(t,x,0,\nu)| \leq n_1
\end{align*}
for all $(t,x,\nu)\in [0,T]\times X\times\wassspaceonM$.
\item \label{L-ii} (a) (\textit{structural bounds on the gradient of $D_v l$}) the function $D_v l$ is $C^1$ in $(t,x,v)$, and there is a constant $c > 1$ such that
\begin{align*}
\frac{1}{c}I_d \leq D^2_{vv} l (t,x,v,\nu) \leq c I_d,
\end{align*} 
where $I_d$ is the $d\times d$ identity matrix, and a constant $k_1$ such that
 \begin{align*}
| D^2_{xv}  l(t,x,v,\nu) | \leq k_1 (1+|v|),
\end{align*}
for all $(t,x,v,\nu)\in [0,T]\times X \times \R^d \times \wassspaceonM$;

(b) (\textit{Lipschitz continuity of $l$ and $D_v l$ in the measure}) the function $D_v l$ is Lipschitz continuous with respect to the variable $\nu\in \mathscr{P}(M)$: there is a constant $c_1$ such that
\begin{align*}
|l(t,x,v,\nu_1) - l(t,x,v,\nu_2)| \leq & \  c_1 \mathbf{d}_M(\nu_1,\nu_2), \quad \nu_1,\nu_2 \in \mathscr{P}(M)
\\
|D_v l(t,x,v,\nu_1) - D_v l(t,x,v,\nu_2)| \leq & \ c_1 \mathbf{d}_M(\nu_1,\nu_2), \quad \nu_1,\nu_2 \in \mathscr{P}(M)
\end{align*}
for all $(t,x,v)\in [0,T]\times X \times \R^d$; 
\item \label{L-iii} (\textit{$|v|$-dependent Lipschitz continuity in time of $l$ and $D_vl$}) there is a constant $\kappa_1$ such that
\begin{align*}
|l(t,x,v,\nu) - l(s,x,v,\nu)| \leq &\  \kappa_1 (1+|v|^2)|t-s|,
\\
|D_v l(t,x,v,\nu) - D_v l(s,x,v,\nu) \leq &\ \kappa_1 (1+|v|)|t-s|
\end{align*}
for all $t,s\in [0,T],$ $(x,v,\nu)\in X \times \R^d \times \wassspaceonM$. 
\end{enumerate}
We also fix a function
\begin{align*}
l_T : X \times \mathscr{P}(M) \longrightarrow &\ \R
\\
(x,\nu) \longmapsto &\ l_T(x,\nu),
\end{align*}
with the following properties:
\begin{enumerate}[(T-i)]
\item \label{T-i} Both $l_T$ and its derivative in $x,$ $Dl_T,$ are bounded on $X \times \mathscr{P}(M).$ 
\item \label{T-ii} Both $l_T$ and $Dl_T$ are Lipschitz continuous in $\nu$ with Lipschitz constant $c_1$ (with respect to the 1-Wasserstein distance $\mathbf{d}_{M}$ on the space $\mathscr{P}(M)$.
\end{enumerate}
This implies, in particular, that $l_T$ is bounded over subsets $X\times \mathcal{K},$ where $\mathcal{K}$ is any compact subset of $\mathscr{P}(M).$ 

For every $(t,x,\nu)\in[0,T]\times X\times\mathscr{P}(\bar{\Omega}\times\R^d),$ let $h(t,x,\cdot,\nu)$ be the Legendre transform of $l(t,x,\cdot,\nu),$ i.e.
\begin{align*}
h(t,x,p,\nu) = l^{\ast}(t,x,p,\nu) := \sup_{v\in\R^d} \{ p\cdot v - l(t,x,v,\nu)\}.
\end{align*} 
\begin{lemma}
\label{lemma:sameforH} 
(\textit{The analogous properties of $h$}) It follows that:
\begin{enumerate}[(H-i)]
\item (\textit{bound on $h$ and $Dh$ at $p=0$}) the function $h$ is $C^1$ in $(x,p)$ and there is a constant $n_2>0$ which depends only on $n_1,$ such that 
\begin{align*}
|h(t,x,0,\nu)| + |D_x h(t,x,0,\nu)| + |D_p h(t,x,0,\nu)| \leq n_2
\end{align*}
for all $(t,x,\nu)\in [0,T]\times X\times\wassspaceonM$.
\item \label{H-ii} (a) (\textit{structural bounds on the gradient of $D_p h$}) the function $D_p h$ is $C^1$ in $(t,x,p)$, and
\begin{align*}
\frac{1}{c}I_d \leq D^2_{pp} h (t,x,p,\nu) \leq c I_d,
\end{align*} 
and there is a constant $k_2=k_2(n_2,c)$ (i.e.~$k_2$ depends only on $n_2$ and $c$) such that
 \begin{align*}
| D^2_{xp}  h(t,x,p,\nu) | \leq k_2 (1+|p|),
\end{align*}
for all $(t,x,p,\nu)\in [0,T]\times X \times \R^d \times \wassspaceonM$;

(b)  (\textit{Lipschitz continuity of $h$ and $D_p h$ in the measure}) the function $D_p h$ is Lipschitz continuous with respect to the variable $\nu\in \mathscr{P}(M)$ and
\begin{align*}
|D_p h(t,x,p,\nu_1) - D_p h(t,x,p,\nu_2)| \leq  \ c c_1 \mathbf{d}_M(\nu_1,\nu_2), \quad \nu_1,\nu_2 \in \mathscr{P}(M)
\end{align*}
for all $(t,x,p)\in [0,T]\times X \times \R^d$; 
\item \label{H-iii} (\textit{$|p|$-dependent Lipschitz continuity in time of $D_p h$}) there is a constant 
$\kappa_2 = \kappa_2(n_2,c,\kappa_1)$ such that
\begin{align*}
|h(t,x,v,\nu) - h(s,x,v,\nu)| \leq &\  \kappa_2 (1+|p|^2)|t-s|,
\\
|D_p h(t,x,v,\nu) - D_p h(s,x,v,\nu) \leq &\ \kappa_2 (1+|p|)|t-s|
\end{align*}
for all $t,s\in [0,T],$ $(x,p,\nu)\in X \times \R^d \times \wassspaceonM$. 
\end{enumerate}
\end{lemma}
\begin{proof}
All these properties are commonly found in the literature; we only show the proof of (H-\ref{H-ii})(b), since it involves the measure variable, though this does not make the problem harder. Fix $p\in\R^d,$ let $v_1$ and $v_2$ be the (unique, in this case) maximizers of the functions $v \mapsto p\cdot v - l(t,x,v,\nu_1)$ and $v\mapsto p\cdot v -l(t,x,v,\nu_2)$ respectively. This implies that
\begin{align*}
p = D_v l(t,x,v_1,\nu_1) = D_v l(t,x,v_2,\nu_2).
\end{align*}
It can be shown (e.g.~see \cite[p.~138]{dacorognadirect}) that 
$$
v_1 = D_p h(t,x,p,\nu_1), \quad v_2 = D_p h(t,x,p,\nu_2).
$$ 
By the lower bound on second derivative $D^2_{vv}l$ above, we have
\begin{align*}
\frac{1}{c} |v_2-v_1|^2 \leq (D_v l(t,x,v_2,\nu) - D_v l(t,x,v_1,\nu)) \cdot (v_2 - v_1),
\end{align*} 
for any $t,$ $x,$ $\nu$. Therefore 
\begin{align*}
\frac{1}{c}|v_2-v_1|^2 \leq & \ (D_v l(t,x,v_2,\nu_1) - D_v l (t,x,v_2,\nu_2))\cdot (v_2-v_1) 
\\
& \ + (D_v l(t,x,v_2,\nu_2) - D_v l(t,x,v_1,\nu_1)) \cdot (v_2 - v_1)  
\\
\leq & \ c_1 \mathbf{d}_M(\nu_1,\nu_2)|v_2-v_1| + (p - p)\cdot (v_2 - v_1)  = c_1 \mathbf{d}_M(\nu_1,\nu_2)|v_2-v_1|,
\end{align*}
from which the desired inequality follows. 
\end{proof}

Whenever $K_1,$ $K_2$ are positive numbers, we denote $K=(K_1,K_2)$. Let
\begin{align*}
\Gamma_1^{K} := \{x:[0,T] \to \bar{\Omega} \ \big| \ x(\cdot) \textrm{ is continuously differentiable}, \ |\dot{x}(t)| \leq K_1, 0\leq t\leq T\}.
\end{align*}
To define the set of \emph{regular} controls, for every $x_0\in \bar{\Omega}$ we let
\begin{align*}
 \Gamma_2^{K,x_0} := \{u:[0,T] \to \bar{B}_{K_1}(0) \ \big| \ & \ u(\cdot) \textrm{ is Lipschitz}, 
\\
& \ |\dot{u}(t)|\leq K_2 \textrm{ a.e.}, \ x_0 + \int_0^t u(\tau) d\tau \in \bar{\Omega} \textrm{ for } t\in[0,T]\}
\end{align*}
for $x_0\in\bar{\Omega}$, and
\begin{align*}
\Gamma_2^{K} := \bigcup\limits_{x_0\in\bar{\Omega}} \Gamma_2^{K,x_0}.
\end{align*}
 Both $\Gamma_1^K$ and $\Gamma_2^K$ are compact with respect to the uniform metrics $d_1$ and $d_2$:
\begin{align*}
d_1(x_1,x_2) := \max\limits_{0\leq t\leq T}|x_1(t) - x_2(t)|, \quad d_2(u_1,u_2) := \max\limits_{0\leq t\leq T}|u_1(t) - u_2(t)|.
\end{align*}
Let 
\begin{align}
\Gamma^{K} = \{ \gamma=(x,u): [0,T]\to \R^d\times\R^d \ | \ x(\cdot) \in \Gamma_1^K, \ u(\cdot) \in \Gamma_2^K \}. \label{eq:GammaK}
\end{align}
By definition, the paths $\gamma\in \Gamma^K$ are bounded. On $\Gamma^K$ we put the metric
\begin{align*}
d_{\Gamma}((x_1,u_1),(x_2,u_2)) := d_1(x_1,x_2) + d_2(u_1,u_2),
\end{align*}
so that $\Gamma^K$ is compact with respect to this metric. We choose not to specify $K$ in the notation $d_{\Gamma}$ because the formula is the same regardless of the values of $K_1,$ $K_2.$

\subsection{Regularity of constrained minimizers}

In order to clarify the discussion that follows, we introduce the following sets:
\begin{align}
A\Gamma_1 := & \  AC(0,T;\bar\Omega),  \notag \\
\notag \\
A\Gamma_2 := & \ \{u:[0,T] \to \R^d \ \big| \ \exists x(\cdot)\in A\Gamma_1 \ | \ u(t)=\dot{x}(t) \textrm{ a.e.}\} \subset L^1([0,T];\R^d), \notag \\
\notag \\
A\Gamma& \  = \{ \gamma=(x,u): [0,T]\to \bar{\Omega}\times\R^d \ | \ x(\cdot) \in A\Gamma_1, \ u(\cdot) \in A\Gamma_2 \}. \label{eq:AGamma}
\end{align}
We note that $(x,u)\in A\Gamma$ does not necessarily mean that $\dot{x}=u;$ $A\Gamma_2$ are merely functions in $L^1([0,T]:\R^d)$ that are derivatives of paths 
in $AC(0,T;\bar{\Omega}).$ 
Clearly, $\Gamma_1^K \subset A\Gamma_1$ and $\Gamma_2^K \subset A\Gamma_2$ for every $K$. The set $\Gamma_2^K$ consists of Lipschitz paths (with Lipschitz constant $K_2$) that can be used as controls for the dynamics (the corresponding integrated paths remain in $\bar{\Omega}$). Regardless of how we turn $A\Gamma$ into a measurable space, to each probability measure $\eta$ on $A\Gamma$  
and  
$x(\cdot)\in A\Gamma_1$ we may associate the cost
\begin{align*}
\int_0^T l (t,x(t),\dot{x}(t),(e_t)_{\#}\eta) dt + l_T(x(T),(e_T)_{\#}\eta),
\end{align*}
where $e_t:\Gamma^K\to \R^d\times\R^d$, $0\leq t\leq T$ are the evaluation mappings: 
\begin{align*}
e_t(\gamma) = e_t((x(\cdot),u(\cdot))) := (x(t),u(t)).
\end{align*}
This cost can be considered as a functional acting on the set of paths $A\Gamma_1$ or its subset $\Gamma_1^K;$ alternatively, it may be considered as a functional on the set of controls $A\Gamma_2$ or its subset $\Gamma_2^K$. 
Given a path\footnote{The dot as subindex is meant to distinguish the path $t\mapsto \nu_t$ from any single measure value of this path.} $\nu_{\cdot}$ in $\wassspaceonM$, it is known that hypotheses (L-\ref{L-i}) through (L-\ref{L-iii}) above are more than enough to ensure that if one considers the functional
\begin{equation}\label{eq:ACprob}
J[x(\cdot);\nu_{\cdot}] := \int_0^T l (t,x(t),\dot{x}(t),\nu_t) dt + l_T(x(T),\nu_T)
\end{equation}
to be defined on the set $A\Gamma_1 = AC(0,T;\bar{\Omega})$, then there exists a minimizer of this functional in $AC^2(0,T;\bar{\Omega})$ where the minimization is taken with respect to the paths $x(\cdot)$ with a common initial point $x(0)=x_0;$ see, e.g., \cite{semiconcavehjb}. 

In the present manuscript we wish to have minimizers with better regularity, which is why we introduced the sets $\Gamma^K$. For our purposes we will heavily rely on \cite[Theorem 3.1]{cannarsaconstcalc} in section \ref{section:smalldata} below.
\begin{thm}\label{thm:cardcannarsa} [Borrowed \& adapted from \cite{cannarsaconstcalc}] 
Let $l$ be subject to the conditions above, $h=l^*,$ let $\nu_t$ be a \emph{Lipschitz} path in $\wassspaceonM,$ and consider the functional $J[ \ \cdot \ ;\nu_t]$ defined above, on the set of $AC(0,T;\bar{\Omega})$ paths $x(\cdot)$ with a common initial point $x(0)=x_0,$ $x_0\in\bar{\Omega}.$ Let $x(\cdot)$ be a minimizer. Then $x(\cdot)$ has the following properties:
\begin{enumerate}[(i)]
\item $x(\cdot) \in C^{1,1}([0,T];\bar{\Omega})$, i.e.~it is continuously differentiable with Lipschitz derivative,
\item a Lipschitz function $p:[0,T]\to \R^d$ exists such that
\begin{align*}
\dot{x}(t) = &\ -D_p h(t,x(t),p(t),\nu_t),  \quad t \in [0,T],
\\
\dot{p}(t) = &\ D_x h(t,x(t),p(t),\nu_t) - \Lambda(t,x(t),p(t),\nu_t)\mathbf{1}_{\partial\Omega}(x(t))Db_{\Omega}(x(t)), \quad \textrm{a.e. } t\in [0,T],
\\
p(T) = &\ Dl_T(x(T),\nu_T) + \beta Db_{\Omega}(x(T))\mathbf{1}_{\partial\Omega}(x(T)).
\end{align*}
\end{enumerate}
\end{thm}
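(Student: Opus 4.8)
The plan is to reduce the statement to \cite[Theorem~3.1]{cannarsaconstcalc} by freezing the measure variable, and then to recall the mechanism of that theorem while tracking how its constants depend on the Lipschitz path $\nu_\cdot$. Concretely, set $\tilde l(t,x,v):=l(t,x,v,\nu_t)$ and $\tilde l_T(x):=l_T(x,\pi^1_{\#}\nu_T)$. Existence of a minimizer $x(\cdot)\in AC^2(0,T;\bar{\Omega})$ with $x(0)=x_0$ is classical by Tonelli's direct method, since (L-\ref{L-i}) together with the lower bound $D^2_{vv}l\geq\frac1c I_d$ forces $\tilde l$ to be convex and superlinear in $v$ (see \cite{semiconcavehjb}). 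The first point to verify is that $\tilde l$ and $\tilde l_T$ meet the structural hypotheses used in \cite{cannarsaconstcalc}: by (L-\ref{L-i})--(L-\ref{L-ii})(a), $\tilde l$ is $C^1$ in $(x,v)$, $D_v\tilde l$ is $C^1$ in $(x,v)$, $\frac1c I_d\leq D^2_{vv}\tilde l\leq cI_d$ and $|D^2_{xv}\tilde l|\leq k_1(1+|v|)$; by (L-\ref{L-ii})(b), (L-\ref{L-iii}) and the Lipschitz continuity of $t\mapsto\nu_t$, the maps $t\mapsto\tilde l(t,x,v)$ and $t\mapsto D_v\tilde l(t,x,v)$ are Lipschitz with the $|v|$-weighted constants $\kappa_1(1+|v|^2)+c_1\,\mathrm{Lip}(\nu_\cdot)$ and $\kappa_1(1+|v|)+c_1\,\mathrm{Lip}(\nu_\cdot)$; and by (T-\ref{T-i}), $\tilde l_T$ is $C^1$, bounded, with bounded gradient. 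These are precisely the conditions under which the cited theorem applies, so it remains to recall why it holds.

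I would organize that recollection in three steps. \emph{Step 1: a priori Lipschitz bound.} One shows $\|\dot x\|_\infty\leq K_1$ with $K_1$ depending only on $T$, $\Omega$ and the structural constants; since the control is the velocity and $\Omega$ is bounded with $\partial\Omega\in C^3$, at any boundary point the trajectory can be steered strictly inward, so the constraint never forces large speeds, and a reparametrization/comparison argument gives the bound. \emph{Step 2: the constrained Euler--Lagrange system.} Penalizing the state constraint by $\varepsilon^{-1}\,\mathrm{dist}(x(\cdot),\bar{\Omega})^2$ (or invoking a maximum principle for state-constrained problems directly), one obtains a function $p\colon[0,T]\to\R^d$ of bounded variation and a nonnegative measure $d\mu$ supported on the contact set $\{t:x(t)\in\partial\Omega\}$ with $\dot x=-D_p h(t,x,p,\nu_t)$, $dp=D_x h(t,x,p,\nu_t)\,dt-Db_{\Omega}(x)\,d\mu$, and $p(T)=D\tilde l_T(x(T))+\beta\,Db_{\Omega}(x(T))\mathbf{1}_{\partial\Omega}(x(T))$. \emph{Step 3: bootstrap to $C^{1,1}$.} The bound $|D^2_{xp}h|\leq k_2(1+|p|)$ from Lemma~\ref{lemma:sameforH} and a Gronwall estimate on the $p$-equation give $p\in L^\infty$, so $x\in W^{2,\infty}$ on every interval disjoint from the contact set. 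On the contact set one has $b_{\Omega}(x(t))\equiv 0$, hence $Db_{\Omega}(x(t))\cdot\dot x(t)\equiv 0$; differentiating this identity once more in the distributional sense and substituting $\dot x=-D_p h$ and $dp=D_x h\,dt-Db_{\Omega}(x)\,d\mu$ produces a relation $\big(Db_{\Omega}(x)\cdot D^2_{pp}h(t,x,p,\nu_t)\,Db_{\Omega}(x)\big)\,d\mu=g(t)\,dt$ on the relative interior of the contact intervals, where $g$ is a bounded combination of $D^2 b_{\Omega}$, $D_x h$, $D^2_{xp}h$, $D^2_{pp}h$ and the a.e.\ time-derivative of $D_p h(t,x,p,\nu_t)$ along $\nu_t$, the last of which is bounded by $c_2\,\mathrm{Lip}(\nu_\cdot)$ via Lemma~\ref{lemma:sameforH}. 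Since $D^2_{pp}h\geq\frac1c I_d$ and $|Db_{\Omega}|=1$ on $\partial\Omega$, the coefficient of $d\mu$ is $\geq\frac1c$, so $\mu$ has a bounded density on each contact interval; ruling out atoms at the junction times then gives $\mu\ll dt$ globally with bounded density $\Lambda$. Consequently $p$ is Lipschitz, $\dot x=-D_p h(t,x,p,\nu_t)$ is Lipschitz, $x\in C^{1,1}([0,T];\bar{\Omega})$, and (i)--(ii) hold in the stated a.e.\ sense.

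The step I expect to be the main obstacle is Step~3 — upgrading the singular multiplier to a bounded density, i.e.\ proving $x\in C^{1,1}$ rather than merely $\dot x\in BV$. It rests essentially on the uniform convexity $D^2_{vv}l\geq\frac1c I_d$ (which gives, through Lemma~\ref{lemma:sameforH}, the two-sided bound on $D^2_{pp}h$ and keeps the coefficient of $d\mu$ above nondegenerate) and on $\partial\Omega\in C^3$ (so that $b_{\Omega}$ is $C^3$ near $\partial\Omega$ and $\tfrac{d^2}{dt^2}b_{\Omega}(x(t))$ is meaningful and controlled). The truly delicate point inside it is the behaviour at the junction times where the trajectory enters or leaves $\partial\Omega$: one must show the contact set is closed, that $\mu$ charges no such instant, and that the density bound obtained on the relative interior of contact intervals survives up to the junctions — this is the heart of the argument in \cite{cannarsaconstcalc}, and it is where the Lipschitz-in-time regularity of $\nu_\cdot$ is used, ensuring that $t\mapsto D_p h(t,x(t),p(t),\nu_t)$ is itself Lipschitz so that the double time-differentiation of $b_{\Omega}(x(t))$ is legitimate.
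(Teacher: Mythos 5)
Your reduction---freezing the Lipschitz path $\nu_\cdot$, setting $\tilde l(t,x,v)=l(t,x,v,\nu_t)$ and $\tilde l_T(x)=l_T(x,\pi^1_{\#}\nu_T)$, checking that (L-i)--(L-iii) and (T-i)--(T-ii) transfer to $\tilde l,\tilde l_T$ with the time-Lipschitz constants inflated by $c_1\,\mathrm{Lip}(\nu_\cdot)$, and then invoking \cite[Theorem 3.1]{cannarsaconstcalc}---is exactly the paper's own argument (Remark \ref{remark:Ltol}), so the proposal is correct and takes essentially the same approach. Your Steps 1--3 sketching the internal mechanism of the cited theorem go beyond what the paper supplies (it relies on the citation alone) and are not needed for the adaptation.
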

In this theorem, 
\begin{itemize}
\item $\beta$ is a constant bounded by either $1$ or the $2c \sup_{x\in X}|D_p h(T,x,Dl_T(x,\nu_T),\nu_T)|;$ 
\item $\Lambda$ is a bounded continuous function defined on $[0,T]\times\Sigma_{\rho_0}\times\R^d\times\mathscr{P}(M)$.
\item $\mathbf{1}_{\partial\Omega}(\cdot)$ is the characteristic function of the boundary $\partial\Omega.$ 
\end{itemize}
\begin{remark}\label{remark:Ltol} The result \cite[Theorem 3.1]{cannarsaconstcalc} deals with a time-dependent Lagrangian $f(t,x,v)$ and no dependence on the measure. To adapt it to our needs, we will consider the path $t\mapsto \nu_t$ as given, put
$$
f(t,x,v) := l(t,x,v,\nu_t), \quad 0\leq t\leq T,
$$
and then use \cite[Theorem 3.1]{cannarsaconstcalc}, which is supported in several facts that are established in that paper. We will carry out now a careful analysis of these, suited to our needs. 
\end{remark}
\begin{center}
\textit{In what follows, $m_0$ denotes a fixed element of $\mathscr{P}(\bar{\Omega}).$ }
\end{center}
\section{Estimates and a small data condition}\label{section:smalldata}
As explained in \cite[p.~179,181]{cannarsaconstcalc}, we have:
\begin{lemma}\label{lemma:DvlandDxl} As a consequence of hypotheses (L-\ref{L-i},\ref{L-ii},\ref{L-iii}), there are constants $C=C(c,n_1)$ and $C(k_1,n_1)$ such that 
\begin{align*}
|D_v l(t,x,v,\nu_t)| \leq  C(c,n_1)(1+|v|) \qquad \textrm{ and }  \qquad |D_x l(t,x,v,\nu_t)| \leq   C(k_1,n_1)(1+|v|^2),
\end{align*} 
independently of the path $\nu_{\cdot}$ and likewise constants $C(c,n_2)$ and $C(k_2,n_2)$ such that
\begin{align*}
|D_p h(t,x,p,\nu_t)| \leq  C(c,n_2)(1+|p|) \qquad \textrm{ and }  \qquad |D_x h(t,x,p,\nu_t)| \leq   C(k_2,n_2)(1+|p|^2)
\end{align*}
independently of the path $\nu_{\cdot}$.
\end{lemma}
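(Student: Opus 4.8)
\textit{Proof proposal.} The plan is to derive all four inequalities by the same elementary mechanism — integrating a bounded second derivative along a straight segment in the relevant variable and adding the value at the origin — so that the proof reduces to one model computation for $l$ and an identical one for $h$, the latter fed by Lemma \ref{lemma:sameforH}. Note that hypothesis (L-\ref{L-iii}) is not actually needed here; it is listed only because the cited source \cite{cannarsaconstcalc} invokes it, and the estimates we obtain will automatically be uniform in $t$ and $\nu_t$.

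For the bound on $D_v l$: since $|D_v l(t,x,0,\nu_t)|\le n_1$ by (L-\ref{L-i}), I would write $D_v l(t,x,v,\nu_t) = D_v l(t,x,0,\nu_t) + \int_0^1 D^2_{vv}l(t,x,sv,\nu_t)\,v\,ds$ and invoke the upper bound $D^2_{vv}l\le cI_d$ from (L-\ref{L-ii})(a) to get $|D_v l(t,x,v,\nu_t)|\le n_1 + c|v|\le (n_1+c)(1+|v|)$, so $C(c,n_1)=n_1+c$ works.

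For the bound on $D_x l$: here one cannot naively integrate $D^2_{xv}l$ along a $v$-segment, because (L-\ref{L-i}) only guarantees $l\in C^1$ in $(x,v)$ and hence does not by itself make $D_x l$ differentiable in $v$. The remedy is to integrate $D_v l$ first and differentiate afterwards: fix $(t,v,\nu_t)$ and set $\phi(x):=l(t,x,v,\nu_t)-l(t,x,0,\nu_t)=\int_0^1 D_v l(t,x,sv,\nu_t)\cdot v\,ds$; since $D_v l$ is $C^1$ in $(t,x,v)$ by (L-\ref{L-ii})(a), differentiation under the integral sign is legitimate and gives $D_x l(t,x,v,\nu_t)-D_x l(t,x,0,\nu_t)=\int_0^1 D^2_{xv}l(t,x,sv,\nu_t)\,v\,ds$. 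Combining $|D_x l(t,x,0,\nu_t)|\le n_1$ from (L-\ref{L-i}), $|D^2_{xv}l|\le k_1(1+|v|)$ from (L-\ref{L-ii})(a), and the trivial inequality $|v|+|v|^2\le 2(1+|v|^2)$ yields $|D_x l(t,x,v,\nu_t)|\le n_1 + 2k_1(1+|v|^2)\le (n_1+2k_1)(1+|v|^2)$.

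For $h=l^*$: Lemma \ref{lemma:sameforH} supplies exactly the ingredients needed to rerun the two computations above verbatim — property (L-\ref{L-i}) for $h$ with constant $n_2$, the bound $D^2_{pp}h\le cI_d$, and $|D^2_{xp}h|\le k_2(1+|p|)$ — with the roles of $v$, $D^2_{vv}l$, $D^2_{xv}l$ now played by $p$, $D^2_{pp}h$, $D^2_{xp}h$. This produces $|D_p h(t,x,p,\nu_t)|\le (n_2+c)(1+|p|)$ and $|D_x h(t,x,p,\nu_t)|\le (n_2+2k_2)(1+|p|^2)$. I do not anticipate any genuine obstacle: the only point deserving a sentence of care is the interchange of derivative and integral in the $D_x l$ step, which is justified by the continuity of $D^2_{xv}l$ from (L-\ref{L-ii})(a).
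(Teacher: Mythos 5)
Your proposal is correct. The paper does not actually prove this lemma --- it simply refers the reader to \cite[p.~179, 181]{cannarsaconstcalc} --- and your argument is the standard one that such a reference carries out: the fundamental theorem of calculus along a segment in $v$ (resp.\ $p$), combined with the bound at $v=0$ from (L-\ref{L-i}) and the operator-norm bounds $D^2_{vv}l\le cI_d$, $|D^2_{xv}l|\le k_1(1+|v|)$ from (L-\ref{L-ii})(a), with Lemma \ref{lemma:sameforH} supplying the corresponding ingredients for $h$. Your extra care about justifying $D_x l(t,x,v,\nu_t)-D_x l(t,x,0,\nu_t)=\int_0^1 D^2_{xv}l(t,x,sv,\nu_t)\,v\,ds$ by differentiating under the integral (rather than assuming $D_xl$ is differentiable in $v$) is a legitimate and welcome refinement, and your remark that (L-\ref{L-iii}) is not needed for these particular estimates is also accurate.
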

\begin{center}
    In this section, we fix a Lipschitz path $\nu_{\cdot}$ with values in $\mathscr{P}(M)$.
\end{center}
\begin{proof}[Sketch of proof of Theorem (\ref{thm:cardcannarsa})]
 \textbf{First, it is assumed that $X=\R^d.$} That is, assume the hypotheses (L-\ref{L-i}-\ref{L-iii}) and (T-\ref{T-i},\ref{T-ii}) are stated for functions $l:[0,T]\times\R^d\times\R^d\times \mathscr{P}(M)\to \R$ and $l_T:\R^d\times\mathscr{P}(M)\to\R$). 
 
Let us denote $g(x) = l_T(x,\nu_T),$ $x\in X.$ Let
\begin{equation}\label{eq:theta}
\theta := T[C(c,n_1) + n_1 + 2 \sup\limits_{x\in X} |g(x)|].
\end{equation}
We refer to Remark \ref{remark:Ltol}, where $f(t,x,v):= l(t,x,v,\nu_t).$ 
For fixed $x_0\in \bar{\Omega},$ $\epsilon>0,$ $\delta>0,$ the \textit{penalized} problem (compare with \eqref{eq:ACprob}) is to minimize
\begin{align*}
J^{\epsilon,\delta}[x(\cdot)] := \int_0^T \big[f(t,x(t),\dot{x}(t)) + \frac{d_{\Omega}(x(t))}{\epsilon}\big] dt
+ \frac{d_{\Omega}(x(T))}{\delta} + g(x(T)) 
\end{align*}
over the set $AC(0,T;\R^d)$ with common initial points $x(0)=x_0.$ It is known \cite{cesari-opt} that there exists at least one minimizer for each $x_0\in \bar{\Omega}$; the set of such is denoted by $\mathcal{X}_{\epsilon,\delta}[x_0]$; the set of minimizers of the \emph{un}penalized problem (i.e.~\eqref{eq:ACprob}) with initial state at $x_0\in\bar{\Omega}$ is denoted by $\mathcal{X}[x_0].$  We will now describe, by steps, how \cite[Theorem 3.1]{cannarsaconstcalc} is established.

\textit{Step 1.} For any $\rho\in(0,\rho_0]$ there exists $\epsilon(\rho)>0$ such that, picking $\epsilon\in(0,\epsilon(\rho)]$ and $\delta > 0$, it follows that $\sup_{t\in[0,T]}d_{\Omega}(x(t))\leq \rho$ whenever $x_0\in\bar{\Omega}$ and $x(\cdot)\in\mathcal{X}_{\epsilon,\delta}(x_0).$ That is, the problem can be sufficiently penalized so that minimizers remain as close to $\bar{\Omega}$ as desired. We must determine the function $\epsilon(\rho)$ precisely. 
It is readily shown that if $x(\cdot)\in \mathcal{X}_{\epsilon,\delta}(x_0)$ then
\begin{align*}
    \frac{1}{4c}\int_0^T [ |\dot{x}(t)|^2 + \frac{1}{\epsilon} d_{\Omega}(x(t))] dt \leq \theta.
\end{align*}
This inequality implies that 
\begin{align*}
    |d_{\Omega}(x(t)) - d_{\Omega}(x(s))| \leq \sqrt{4c\theta}|t-s|^{1/2}
\end{align*}
for all $s,t\in [0,T]$ and $x(\cdot)\in \mathcal{X}_{\epsilon,\delta}[x_0],$ $x_0\in\bar{\Omega}.$ Denote, for the moment, $H_0 = \sqrt{4c\theta};$ thus, $H_0$ is the $1/2$-H\"older constant of $d_{\Omega}(x(\cdot)).$ Let
\begin{equation*}
    \epsilon_0 := \frac{\rho_0^3}{64c^2\theta^2} = \frac{\rho_0^3}{16c\theta H_0^2}. 
\end{equation*}    
    Let $J$ be any subinterval of     
    $[0,T]$ of length $|J|=\frac{1}{2}\rho_0^2/H_0^2.$ Then 
\begin{equation}\label{eq:rhozeroontheright}
        \frac{2}{3}H_0|J|^{1/2} + \frac{4\theta c\epsilon}{|J|} < \rho_0.
\end{equation}
Now, 
$$
\int_J d_{\Omega}(x(t)) dt \leq \int_0^T d_{\Omega}(x(t)) dt \leq 4\theta c\epsilon
$$
and, since $d_{\Omega}(x(\cdot))$ is $1/2$-H\"older with constant $H_0$, a calculation (e.g.~arguing by contradiction) shows that this implies that $d_{\Omega}(x(t))$ is no larger than the left-hand side of \eqref{eq:rhozeroontheright}. Therefore
$$
d_{\Omega}(x(t)) < \rho_0 
$$
for all $t\in J.$ Since the subinterval $J$ of that length is arbitrary, this argument shows the following: if $0 < \rho \leq \rho_0,$ then there is a function $\epsilon(\rho),$ namely, $\epsilon(\rho) = \frac{\rho^3}{64c^2\theta^2},$ such that, for any $x_0\in\bar{\Omega},$ $\delta >0,$ $x(\cdot)\in \mathcal{X}_{\epsilon,\delta}[x_0],$ the distance between $x(t)$ and $\Omega$ is never larger than $\rho.$ We see that the function $\epsilon(\cdot)$ is increasing, and $\epsilon_0=\epsilon(\rho_0).$ 
From now on $\delta$ will have a specific value, namely:
\begin{equation}\label{eq:deltaandm}
\delta := \min\{\frac{1}{2mc},1\}, \qquad \textrm{where } \  m := \sup_{x\in X}|D_p h(T,x,Dl_T(x,\nu_T),\nu_T)|.
\end{equation}
Because of Lemma \ref{lemma:DvlandDxl}, $m$ is a constant whose bound involves only $c,$ $n_2$, $\|Dl_T\|_{\infty},$ and, hence, the same is true of $\delta.$  
Lastly, let 
\begin{equation}\label{eq:Csub1}
C_1 := 8c + 8c\sup\limits_{x\in X}| Dl_T(x,\nu_T)|^2 + 2k_2 + \tilde{\kappa}_1(c_1,\textrm{Lip}(\nu_{\cdot}))(T+4c\theta),
\end{equation}
where 
\begin{equation}\label{eq:Bsub1}
\tilde{\kappa}_1(c_1,\textrm{Lip}(\nu_{\cdot})) := \kappa_1 + c_1\textrm{Lip}(\nu_{\cdot})
\end{equation}
and we will sometimes omit the brackets that indicate the dependence on the other constants.
We set $f(t,x,v) = l(t,x,v,\nu_t)$, as explained in Remark \ref{remark:Ltol}, where $f$ is the running cost in \cite{cannarsaconstcalc}. It is important to identify precisely the time regularity for $f,$ i.e.~the analogue of (L-\ref{L-iii}) for $f$. We have
\begin{align}
    |f(t,x,v) - f(s,x,v)| = &\ |l(t,x,v,\nu_t) - l(s,x,v,\nu_s)| \leq \kappa_1 (1+|v|^2)|t-s| + c_1 \textbf{d}_M(\nu_t,\nu_s) 
    \notag
    \\
    \leq & \  ( \kappa_1 (1+|v|^2) + c_1 \textrm{Lip}(\nu_{\cdot}) ) |t-s|
    \notag
    \\
    \leq & \ ( \kappa_1 + c_1 \textrm{Lip}(\nu_{\cdot}) ) (1+|v|^2)|t-s| = \tilde{\kappa}_1(c_1,\textrm{Lip}(\nu_{\cdot})) (1+|v|^2)|t-s| \label{eq:timeregf}
\end{align}
Similarly,
\begin{align}
    |D_v f(t,x,v) - D_v f(s,x,v)| \leq \tilde{\kappa}_1(c_1,\textrm{Lip}(\nu_{\cdot})) (1+|v|) |t-s|. \label{eq:timeregDf}
\end{align}
\textit{Step 2.} Let $\rho\in (0,\rho_0],$ $\epsilon\in(0,\epsilon(\rho)],$ $x_0\in\bar{\Omega}$ and $x(\cdot)\in\mathcal{X}_{\epsilon,\delta}[x_0].$ Then $x(\cdot)$ is of class $\mathcal{C}^{1,1}$ and there is a Lipschitz path $p(\cdot)$ in $\R^d$ such that, for a.e.~$t\in[0,T],$ 
\begin{equation}\label{eq:lipschitzarcepsilon}
\left.
\begin{aligned}
\dot{x}(t) = & \ D_p f^{\ast}(t,x(t),p(t))
\\
\dot{p}(t) = & \ D_x f^{\ast}(t,x(t),p(t)) - \frac{\lambda(t)}{\epsilon} Db_{\Omega}(x(t))
\\
p(T) = & \ Dg(x(T)) + \frac{\beta}{\delta}Db_{\Omega}(x(T))
\end{aligned}
\right\}
\end{equation}
and
\begin{equation}\label{eq:C1a}
|p(t)|^2 \leq 4c\big[\frac{1}{\epsilon}d_{\Omega}(x(t)) + \frac{C_1(c,n_1,n_2,\theta,\tilde{\kappa}_1)}{\delta^2}\big]
\end{equation}
where $C_1$ was introduced in \eqref{eq:Csub1}. Note that $C_1$ depends on $c, n_1, n_2, \theta$ and on $\tilde{\kappa}_1$; $\tilde{\kappa}_1$ is the function \eqref{eq:Bsub1} and $\theta$ is given in \eqref{eq:theta}, $f^{\ast}$ is the Legendre transform of $f,$ i.e.~$f^{\ast}(t,x,p) = h(t,x,p,\nu_t)$ and $\lambda:[0,T]\to[0,1]$ is a measurable map. The proof is based on Pontryagin's maximum principle \cite{vinter}. The vector $p$ is the adjoint vector in that context; see formula (3.32) in \cite{cannarsaconstcalc}, and the paragraph that follows it. Said formula means that the value of $\dot{p}(t)$ depends on the \textit{limiting subdifferential in $(t,x)$} of the penalized Lagrangian $f(t,x,v) + d_{\Omega}(x)/\epsilon$ at the point $(t,x(t),\dot{x}(t))$: if $x(t)\in \Omega,$ then it is $D_x f(t,x(t),\dot{x}(t));$ if $0<b_{\Omega}(x(t))<\rho,$ then it is $D_xf(t,x(t),\dot{x}(t)) + \frac{1}{\epsilon} D b_{\Omega}(x(t)),$ while if $x(t)\in\partial\Omega,$ then Lemma 2.1 in \cite{cannarsaconstcalc}, which provides a formula for the limiting subdifferential of the function 
$d_{\Omega}$ in terms of the $\mathcal{C}^2_b$ function $b_{\Omega}$, gives that $\dot{p}(t)$ is a point in the segment $D_xf(t,x(t),\dot{x}(t)) + \frac{1}{\epsilon} [0,1]D b_{\Omega}(x(t)).$ Thus it holds that $\lambda(t)$ appearing in \eqref{eq:lipschitzarcepsilon} belongs to $[0,1]$ for a.e.~$t\in[0,T].$ 

\textit{Step 3}. Suppose $\epsilon\leq \epsilon_0,$ $x(\cdot)\in\mathcal{X}_{\epsilon,\delta}[x_0],$ $[a,b]\subset [0,T],$ $d_{\Omega}(x(a))=0,$ $d_{\Omega}(x(t)) > 0$ for $t\in (a,b),$ and $d_{\Omega}(x(b))=0$ or $b=T.$ Suppose $d_{\Omega}(x(\cdot))$ attains its maximum on $[a,b]$ at $t'.$ Then, it can be shown \cite[Lemma 3.6 and 3.7]{cannarsaconstcalc} that 
\begin{equation}\label{eq:secondderivativenonpositive}
    \frac{d^2}{dt^2}d_{\Omega}(x(t))\big|_{t=t'} \leq 0.
\end{equation}
We mention that this is where $\delta$ has to be as in \eqref{eq:deltaandm}. 
Using the formulas \eqref{eq:lipschitzarcepsilon}, inequality \eqref{eq:secondderivativenonpositive} gives:
\begin{align*}
    0 \geq & \ [ D^2 d_{\Omega}(x(t')) D_p f^{\ast}(t',x(t'),p(t'))  ] \cdot D_p f^{\ast}(t',x(t'),p(t')) 
    \\
    &  \ - Dd_{\Omega}(x(t')) \cdot D^2_{pt} f^{\ast} (t',x(t'),p(t'))
    \\
    &  \ + Dd_{\Omega}(x(t')) \cdot [ D^2_{px} f^{\ast}(t',x(t'),p(t')) D_p f^{\ast}(t',x(t'),p(t')) ]
    \\
    & \ - Dd_{\Omega}(x(t')) \cdot [ D^2_{pp} f^{\ast}(t',x(t'),p(t')) D_x f^{\ast}(t',x(t'),p(t'))   ]
    \\
    & \ + \frac{1}{\epsilon} Dd_{\Omega}(x(t')) \cdot [ D^2_{pp}f^{\ast}(t',x(t'),p(t')) Dd_{\Omega}(x(t'))   ].
\end{align*}
At this moment we must remark that, letting 
\begin{align*}
    \tilde{\kappa}_2 = \tilde{\kappa}_1  C(c,n_2),
\end{align*}
one can verify that inequalities \eqref{eq:timeregDf} and \eqref{eq:timeregf} hold for $f^{\ast}$ with $\tilde{\kappa}_2$ in place of $\tilde{\kappa}_1$, that is,
in particular:
\begin{align}
    |D_p f^{\ast}(t,x,p) - D_p f^{\ast}(s,x,p)| \leq \tilde{\kappa}_2 (1+|p|) |t-s|. \label{eq:timeregDf2}
\end{align}
To continue with the argument, recall the bounds on $D^2_{pp}f^{\ast}$ from the hypotheses (i.e.~Lemma \ref{lemma:sameforH}), due to which the term in the last line is bounded below by $\frac{1}{c\epsilon} \|b_{\Omega}\|_{\mathcal{C}^2_b}.$ Using the formula for the bounds of the terms in the other lines (namely: \eqref{eq:timeregDf2}, (H-\ref{H-iii}) and Lemma
\ref{lemma:DvlandDxl}), after dividing throughout by $\|b_{\Omega}\|_{\mathcal{C}^2_b}$ we get:
\begin{align}
    & \ \frac{1}{c\epsilon} \notag
    \\
    \leq & \ 
    C(c,n_2)(1+|p|)^2 
    + 
    \tilde{\kappa}_2(c_1,\textrm{Lip}(\nu_{\cdot}))(1+|p|)
    + 
    k_2 C(c,n_2)(1+|p|)^2
    +
    \frac{1}{2}d c C(k_2,n_2)(1+|p|^2) \notag
    \\
    \leq & \ \big[ 2C(c,n_2) + 2 \tilde{\kappa}_2(c_1,\textrm{Lip}(\nu_{\cdot})) + 2k_2C(c,n_2)
    + \frac{1}{2}d c C(k_2,n_2) \big] (1+|p|^2) \notag
    \\
    \leq & \ \big[ C(c,n_2)( 2 + 2k_2) + 2 \tilde{\kappa}_2(c_1,\textrm{Lip}(\nu_{\cdot}))  
    + \frac{1}{2}d c C(k_2,n_2) \big] \big( 1 + 4c\big[\frac{1}{\epsilon}d_{\Omega}(x(t')) + \frac{C_1}{\delta^2}\big]  \big), \notag 
    \end{align}
that is,
\begin{align}
    \frac{1}{c\epsilon} \leq (\bar{C}_2 + 2\tilde{\kappa}_2)[1+4c(\frac{1}{\epsilon}d_{\Omega}(x(t')) 
    + \frac{\bar{C}_1+\tilde{\kappa}_1(T+4c\theta)}{\delta^2})], \label{eq:obtainedineq} 
\end{align}
where we have set ---recall \eqref{eq:Csub1}---
\begin{align*}
    \bar{C}_1 = & \ 8c + 8c\sup\limits_{x\in X}| Dl_T(x,\nu_T)|^2 + 2k_2, \qquad ( \Longrightarrow \quad C_1 = \bar{C}_1 + 
    \tilde{\kappa}(T+4c\theta))
    \\
    \bar{C}_2 = & \ C(c,n_2)( 2 + 2k_2) + \frac{1}{2}d c C(k_2,n_2)
\end{align*}
to distinguish constants that do not depend on $\textrm{Lip}(\nu_{\cdot}).$ We want to set the value of $\epsilon$ precisely, so that the obtained inequality \eqref{eq:obtainedineq}
is impossible. This would imply that for this value of $\epsilon$, the path $x(\cdot)$ remains in $\bar{\Omega}.$ By what was discussed above, if $\epsilon \leq \epsilon_0$ and $x(\cdot)\in\mathcal{X}^{\epsilon,\delta}[x_0],$ then $d_{\bar{\Omega}}(x(t)) \leq \rho(\epsilon)$ for all $t.$ 
Let $\epsilon_1$ be so that $\rho_1:=\epsilon^{-1}(\epsilon_1)$ satisfies 
\begin{equation*}
   \gamma  \frac{1}{c} =  (\bar{C}_2+2\tilde{\kappa}_2)4c \rho_1,
\end{equation*}
where $\gamma\in (0,1)$ is to be chosen later. We can find $\epsilon_1$ explicitly by the formula for $\epsilon(\rho),$ indeed:
\begin{align}
\epsilon_1 := \frac{\gamma^3}{64 c^2 \theta^2}\frac{1}{4^3 c^6 (\bar{C}_2+2\tilde{\kappa}_2)^3} = \frac{\gamma^3}{4^6\theta^2 c^8  (\bar{C}_2+2\tilde{\kappa}_2)^3} 
\label{eq:epsilon1andgamma}
\end{align}
    In this case from \eqref{eq:obtainedineq} it follows that 
    \begin{align*}
        \frac{1}{\epsilon_1}  \frac{1}{c}(1-\gamma)  \leq
        (\bar{C}_2 + 2\tilde{\kappa}_2)[1+ \frac{\bar{C}_1+\tilde{\kappa}_1(T+4c\theta)}{\delta^2})],
    \end{align*}
i.e.
    \begin{align*}
    4^6 \theta^2 c^7   (\bar{C}_2+2\tilde{\kappa}_2)^2 \frac{1-\gamma}{\gamma^3}  
    \leq
        1+ \frac{\bar{C}_1+\tilde{\kappa}_1(T+4c\theta)}{\delta^2},
    \end{align*}
giving 
\begin{align*}
    \frac{1-\gamma}{\gamma^3} \leq \frac{1}{4^6 \theta^2 c^7   (\bar{C}_2+2\tilde{\kappa}_2)^2} \big( 1+ \frac{\bar{C}_1+\tilde{\kappa}_1(T+4c\theta)}{\delta^2} \big)
\end{align*}
Since $(1-\gamma)/\gamma^3 \to +\infty$ as $\gamma\to 0^+$, it is clear we can set $\gamma$ to a value that will make the latter inequality absurd. To make it easier to deal with, note that $(1-\gamma)/\gamma < (1-\gamma)/\gamma^3$. So we can set $\gamma$ so that $(1-\gamma)/\gamma$ is larger than the right-hand side of the inequality. This is achieved by 
\begin{equation*}
\gamma := \big[ 1 + \frac{1}{4^6 \theta^2 c^7   (\bar{C}_2+2\tilde{\kappa}_2)^2} \big( 1+ \frac{\bar{C}_1+\tilde{\kappa}_1(T+4c\theta)}{\delta^2} \big) \big]^{-1},
\end{equation*}
giving, for $\epsilon_1,$
\begin{equation}\label{eq:epsilon1equals}
\epsilon_1 := \frac{  \big[ 1 + \frac{1}{4^6 \theta^2 c^7   (\bar{C}_2+2\tilde{\kappa}_2)^2} \big( 1+ \frac{\bar{C}_1+\tilde{\kappa}_1(T+4c\theta)}{\delta^2} \big) \big]^{-3}  }{  4^6 \theta^2 c^8   (\bar{C}_2+2\tilde{\kappa}_2)^3}.
\end{equation}
With $\epsilon=\epsilon_1,$ as long as $\epsilon_1 \leq \epsilon_0,$ we, therefore, have a contradiction, because \eqref{eq:obtainedineq} does not hold. We ignore a priori, however, whether $\epsilon_1$ of formula \eqref{eq:epsilon1equals} is not larger than $\epsilon_0$: if it is larger, then the point $x(t')$ (where $x(\cdot)\in\mathcal{X}^{\epsilon_1,\delta}[x_0]$) is not within the distance $\rho_0$ that makes the calculations above valid. The value $\epsilon_0$ depends directly on $\rho_0$ and thus the geometry of $\bar{\Omega}.$ If $\epsilon_1\geq \epsilon_0$, we can choose $\gamma$ so that $\gamma < 1$ and (see \eqref{eq:epsilon1andgamma}) $\gamma^3/(4^6\theta^2 c^8 (\bar{C}_2 + 2\tilde{\kappa}_2)^3)$ is less than $\epsilon_0.$ This is achieved by the extra requirement $\gamma < \min\{1/2,
(\epsilon_0 4^6 \theta^2 c^8 \bar{C}_2)^{1/3}\}.$ Correspondingly, by letting
\begin{align}
\epsilon_1 := \min\big\{\frac{1/8}{4^6\theta^2 c^8 (\bar{C}_2 + 2\bar{\kappa}_2)^3},
\frac{\epsilon_0 \bar{C}_2}{(\bar{C}_2 + 2\tilde{\kappa}_2)^3}, \frac{\big[ 1 + \frac{1}{4^6 \theta^2 c^7 
(\bar{C}_2 +2 \tilde{\kappa}_2)^2} \big( 1+ \frac{\bar{C}_1 
+
\tilde{\kappa}_1(T+4c\theta)}{\delta^2} \big) \big]^{-3} }{4^6 \theta^2 c^8   (\bar{C}_2+2\tilde{\kappa}_2)^3} 
\big\}, 
\label{eq:epsilon1reallyequals}
\end{align}
we ensure that the calculations above are valid and the contradiction is obtained. Thus, if $\epsilon \leq \epsilon_1,$ where $\epsilon_1$ is given by \eqref{eq:epsilon1reallyequals}, the minimizer $x(\cdot)\in 
\mathcal{X}^{\epsilon,\delta}[x_0]$ is sure to remain in $\bar{\Omega}$ in the penalized problem. For such a value of $\epsilon,$ $x(\cdot)$ is a minimizer of the penalized problem if and only if it is a minimizer of the unpenalized problem. Thus, we can say the following about $\dot{p}(t)$:
\begin{equation}\label{eq:Lipcnstofpprecise}
|\dot{p}(t)| \leq C(k_2,n_2)(1+ 4 \frac{cC_1}{\delta^2}) + P_3(c_1\textrm{Lip}(\nu_{\cdot})),
\end{equation}
where $P_3=P_3(y)$ is a polynomial of third degree in $y$ whose coefficients depend on all data constants \emph{other than} $c_1$ and $\textrm{Lip}(\nu_{\cdot}).$ We can denote the right-hand side of \eqref{eq:Lipcnstofpprecise} by $\textrm{Lip}(p(\cdot)).$ 

\textit{Last step.} Bounds are established and an explicit formula is obtained for $\lambda(t),$ proving that it's continuous. Then $\Lambda(t,x(t),p(t),\nu_t):=\frac{\lambda(t)}{\epsilon}$ is a definition and the formula they obtain for $\lambda(t)$ proves that $\Lambda$ is continuous. Of importance to us is to note the following: for $\epsilon\leq \epsilon_1$ with $\epsilon_1$ as above (i.e.~\eqref{eq:epsilon1reallyequals}), since $d_{\Omega}(x(t)) =0$ for $x(\cdot)\in \mathcal{X}^{\eps,\delta}[x_0]$ for every $t\in[0,T]$ (and therefore in $\mathcal{X}[x_0]$ also), we have, from 
\eqref{eq:C1a}, that for every $x(\cdot)\in \mathcal{X}[x_0]$, 
\begin{equation}
\label{eq:C1aa}
|p(t)|^2 \leq 4c\frac{C_1}{\delta^2},
\end{equation}
where $C_1$, we remind again, is
\begin{align*}
C_1 = C_1(c,n_1,n_2,\theta,\tilde{k}_1) = & \ 8c + 8c \sup_{x\in X}|Dl_T(x,\nu_T)|^2 + 2k_2 + 
\tilde{\kappa}_1(T+4c\theta)
\\
= & \ \bar{C}_1 + \tilde{\kappa}_1(T+4c\theta);
\end{align*}
see the definition following formula \eqref{eq:obtainedineq}, and \eqref{eq:Csub1}.


\textbf{The case when $X$ is a bounded open set} including $\bar{\Omega}$ is treated by extending the running and terminal costs to $\R^d$ in a way that preserves the hypotheses (L-\ref{L-i}-\ref{L-iii}) and (T-\ref{T-i},\ref{T-ii}). Thus Theorem \ref{thm:cardcannarsa} is established.
\end{proof}

We will now record the Lipschitz constant of the optimal trajectory of $J[\cdot, \nu_{\cdot}],$ and of its velocity. For the former, using the bound on $D_p h$ from Lemma
\ref{lemma:DvlandDxl}, and the fact that for the optimal trajectory $x(\cdot)$ we have $\dot{x}(t) =- D_p h(t,x(t),p(t),\nu_t)$ $=-D_p f^{\ast}(t,x(t),p(t))$ from Theorem \ref{thm:cardcannarsa}. That is,
\begin{equation*}
|\dot{x}(t)| \leq C(c,n_2)\big( 1 + 2\frac{\sqrt{cC_1}}{\delta}\big),
\end{equation*}
$0\leq t\leq T,$ 
so the right-hand side is a Lipschitz constant for $x(\cdot);$ we can denote it by $\textrm{Lip}(x(\cdot)).$ For the latter, using the formula for $\dot{x}(t)$ from Theorem \ref{thm:cardcannarsa}, and the hypotheses on the data, we estimate:
\begin{align*}
& \ |\dot{x}(s)-\dot{x}(t)| = |D_p f^{\ast}(s,x(s),p(s),\nu_s) - D_p f^{\ast}(t,x(t),p(t),\nu_t)|
\\
\leq & \quad  |D_p f^{\ast}(s,x(s),p(s)) - D_p f^{\ast}(t,x(s),p(s))|
\\
& + |D_p f^{\ast}(t,x(s),p(s)) - D_p f^{\ast}(t,x(t),p(s))|
\\
& + |D_p f^{\ast}(t,x(t),p(s)) - D_p f^{\ast}(t,x(t),p(t))|
\end{align*}
so
\begin{align*}
|\dot{x}(s)-\dot{x}(t)| \leq & \  \tilde{\kappa}_2\times (1 + |p(t)|)|t-s| \hspace{3cm} \textrm{ (by \eqref{eq:timeregDf2}) } 
\\ &  + k_2\times (1+|p(t)|)\textrm{Lip}(x(\cdot))|t-s| \hspace{1.5cm} \textrm{ (by H-(\ref{H-ii})) }
\\ &  + c\sqrt{d}\textrm{ Lip}(p(\cdot))|t-s| \hspace{3cm} \textrm{ (by H-(\ref{H-ii})) }
\end{align*}
$0\leq t,s\leq T.$ 
Therefore, relying on the formula for the Lipschitz constant of $p(\cdot),$ i.e.~(\ref{eq:Lipcnstofpprecise}) and the bound on $|p(t)|,$ i.e.~(\ref{eq:C1aa}), we get
\begin{align*}
& \ |\dot{x}(s)-\dot{x}(t)| 
\\
\leq & \   |s-t|\bigg( \big(1+2\frac{\sqrt{c C_1}}{\delta}\big)\big[ \tilde{\kappa}_2 + k_2 
\underbrace{C(c,n_2)(1+2\sqrt{cC_1}/\delta)}_{\textrm{Lip}(x(\cdot))} \big] + c\sqrt{d}\textrm{Lip}(p(\cdot)) \bigg). 
\end{align*}

We have assumed that $l,$ $l_T$ satisfy the hypotheses of section \ref{subsection:notationandhypotheses}, that $m$ and $\delta$ are given by  \eqref{eq:deltaandm}, $C_1$ is as in (\ref{eq:Csub1}) and $C(c,n_2)$, $C(k_2,n_2)$ are fixed by Lemma \ref{lemma:DvlandDxl}. As a consequence, we have obtained Lipschitz constants for a minimizing path $x(\cdot)$ and its velocity $\dot{x}(\cdot).$ In the expressions for these Lipschitz constants, we can keep track of and separate the terms that do \emph{not} depend on $c_1$ and $\textrm{Lip}(\nu_{\cdot}).$  Including the expression for $\textrm{Lip}(p(\cdot))$ from \ref{eq:Lipcnstofpprecise} and making some simplifications, we collect the conclusion in the following:
\begin{lemma}\label{lemma:Ksub1}
Let $\nu_{\cdot}$ be a Lipschitz path in $\mathscr{P}(M),$ $x_0\in \bar{\Omega},$ and $l$, $l_T$ satisfy the hypotheses of section \ref{subsection:notationandhypotheses}.  Then there exist numbers $b_1, b_2, \ldots b_{10},$ that depend on all the hypotheses constants of section \ref{subsection:notationandhypotheses} with the exception of $c_1$ and $\textrm{Lip}(\nu_{\cdot})$, for which, if 
\begin{align*}
K_1 > & \ b_1 + b_2\sqrt{b_3 + b_4 c_1  \textrm{Lip}(\nu_{\cdot})},
\\
K_2 > & \  b_5 + b_6\sqrt{b_7 + b_8 c_1 \textrm{Lip}(\nu_{\cdot})} + b_{9} c_1 \textrm{Lip}(\nu_{\cdot}) + 
b_{10}(c_1\textrm{Lip}(\nu_{\cdot}))^3,
\end{align*}
then any minimizer $x(\cdot)$ of 
\begin{align*}
\min\{J[x(\cdot);\nu_{\cdot}] \ \big| \ x(\cdot) \in A\Gamma_1\}
\end{align*}
is such that $(x(\cdot),\dot{x}(\cdot))$ lies in $\Gamma^{K},$ where $K=(K_1,K_2).$  
\end{lemma}

Suppose now that $\eta\in \mathscr{P}(\Gamma^K),$ and $\nu_T = (e_T)_{\#}\eta,$ and that $\nu_{\cdot}$ is defined by $\nu_t = (e_t)_{\#}\eta,$ $0\leq t\leq T.$ It can be easily checked that $\textrm{Lip}(\nu_{\cdot}) = K_1 + K_2.$ 
\begin{cor}\label{cor:ifmonesmallenough}
Let $x_0\in\bar{\Omega},$ $l$, $l_T$ satisfy the hypotheses of section \ref{subsection:notationandhypotheses}. If the constant $c_1$ is sufficiently small, then there exists $K=(K_1,K_2)$ such that for any $\eta\in\mathscr{P}(\Gamma^K)$, setting 
$$
\nu_t := (e_t)_{\#}\eta, \quad 0\leq t\leq T,
$$ 
any minimizer $x(\cdot)$ of 
\begin{align*}
\min\{J[x(\cdot);\nu_{\cdot}] \ \big| \ x(\cdot) \in A\Gamma_1\}
\end{align*}
satisfies: $(x(\cdot),\dot{x}(\cdot))\in \Gamma^K.$ 
\end{cor}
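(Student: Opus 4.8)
The plan is to obtain the corollary as a direct consequence of Lemma \ref{lemma:Ksub1}: once a Lipschitz path $\nu_{\cdot}$ is fixed, that lemma already forces $(x(\cdot),\dot x(\cdot))\in\Gamma^{K}$ for every minimizer, as soon as $K_1$ and $K_2$ exceed the two explicit thresholds --- call them $\tilde K_1$ and $\tilde K_2$ --- appearing in its hypotheses. So the whole task is to exhibit a single $K=(K_1,K_2)$ for which, when $\nu_t=(e_t)_{\#}\eta$ with $\eta\in\mathscr{P}(\Gamma^{K})$, those thresholds actually drop below $K_1$ and $K_2$. This is a genuine compatibility (fixed-point-flavored) requirement, because for such $\eta$ the path $\nu_{\cdot}$ is Lipschitz with $\textrm{Lip}(\nu_{\cdot})=K_1+K_2$, so $\tilde K_1$ and $\tilde K_2$ themselves depend on $K$.

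First I would record how the thresholds depend on $K$. Using (T-\ref{T-i})--(T-\ref{T-ii}) one checks, exactly as in the computation leading up to the statement, that $\theta$, $m$, $\delta$ of \eqref{eq:theta}--\eqref{eq:deltaandm} are independent of $K$, while $C_1$ depends on $K$ only through the term $(T+4c\theta)c_1(K_1+K_2)$. Substituting into the formulas of Lemma \ref{lemma:Ksub1} and collecting every constant that depends only on the data of (L-\ref{L-i})--(L-\ref{L-iii}) and Lemma \ref{lemma:sameforH} --- but not on $c_1$, $c_2$ or $K$ --- into symbols $b_2,\dots,b_6$, one arrives at
\begin{align*}
\tilde K_1 = b_2+b_3\sqrt{\,b_4+c_1(K_1+K_2)\,},\qquad \tilde K_2 < b_5+(b_6c_1+c_2)(K_1+K_2).
\end{align*}
The two decisive features are that $\tilde K_1$ grows only like the square root of $K_1+K_2$, whereas $\tilde K_2$ grows linearly in $K_1+K_2$ with slope $b_6c_1+c_2$, a quantity we control through $c_1$ (recall $c_2=c_1c$ by Lemma \ref{lemma:sameforH}).

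Next I would fix the smallness: choose $c_1<1$ small enough that $\lambda:=b_6c_1+c_2<1$, and then, for this $c_1$, set $K_2:=(b_5+\lambda K_1)/(1-\lambda)$ as a function of $K_1$. A one-line computation gives $b_5+\lambda(K_1+K_2)=K_2$, hence $\tilde K_2<K_2$ for every $K_1>0$. It then remains only to choose $K_1$ so that $\tilde K_1<K_1$, i.e.
\begin{align*}
b_2+b_3\sqrt{\,b_4+c_1\Bigl(K_1+\tfrac{b_5+\lambda K_1}{1-\lambda}\Bigr)}<K_1 ,
\end{align*}
and since the left-hand side is $O(\sqrt{K_1})$ while the right-hand side is linear in $K_1$, every sufficiently large $K_1$ works; fix one, together with the associated $K_2$.

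Finally, for this $K=(K_1,K_2)$ and any $\eta\in\mathscr{P}(\Gamma^{K})$, put $\nu_t=(e_t)_{\#}\eta$; the path is Lipschitz with $\textrm{Lip}(\nu_{\cdot})=K_1+K_2$, so the hypotheses of Lemma \ref{lemma:Ksub1} hold with $K_1>\tilde K_1$ and $K_2>\tilde K_2$, whence $(x(\cdot),\dot x(\cdot))\in\Gamma^{K}$ for every minimizer of $\min\{J[x(\cdot);\nu_{\cdot}]\mid x(\cdot)\in A\Gamma_1\}$ --- uniformly in the initial point $x_0\in\bar\Omega$, since none of $b_2,\dots,b_6$, $\theta$, $m$, $\delta$ depend on it. I do not expect a serious obstacle: the only real content beyond Lemma \ref{lemma:Ksub1} is the structural observation that $\tilde K_1$ enters sublinearly and $\tilde K_2$ linearly with a slope that the small-data hypothesis forces below $1$, which is precisely what makes the coupled system $\tilde K_1<K_1$, $\tilde K_2<K_2$ solvable; the remaining work is the (already executed) bookkeeping that the constants $b_2,\dots,b_6$ are genuinely $K$-independent.
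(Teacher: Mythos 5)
Your proposal is correct and follows essentially the same route as the paper: the corollary is exactly the fixed-point-style bookkeeping around Lemma \ref{lemma:Ksub1}, verifying that $\theta$, $m$, $\delta$ and the constants $b_2,\dots,b_6$ are $K$-independent, that $\tilde K_1$ grows like $\sqrt{K_1+K_2}$ while $\tilde K_2$ grows linearly with slope $b_6c_1+c_2$, and then choosing $c_1$ small so this slope is below $1$, $K_2=(b_5+(b_6c_1+c_2)K_1)/(1-b_6c_1-c_2)$, and $K_1$ large. This matches the paper's argument, including the choice of $K_2$ as a function of $K_1$ and the final appeal to Lemma \ref{lemma:Ksub1}.
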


\begin{proof}
As noted, for any choice of $K=(K_1,K_2),$ if $\eta\in\mathscr{P}(\Gamma^K)$ and $\nu_t := (e_t)_{\#}\eta,$ $0\leq t\leq T$, then $\textrm{Lip}(\nu_{\cdot}) = K_1 + K_2.$ Denote the right-hand sides of Lemma \ref{lemma:Ksub1} for such a choice by $\tilde{K}_1,$ $\tilde{K}_2,$ i.e.
\begin{align*}
\tilde{K}_1 := & \ b_1 + b_2\sqrt{b_3 + b_4 c_1  (K_1 + K_2)}
\\
\tilde{K}_2 := & \ b_5 + b_6\sqrt{b_7 + b_8 c_1(K_1 + K_2)} + b_{9} c_1 (K_1 + K_2) + 
b_{10} c_1^3(K_1 + K_2)^3,
\end{align*}
where $b_1, \ldots b_{10}$ are as in the lemma, which says that $\tilde{K}_1$ and $\tilde{K}_2$ will be the Lipschitz constant of the minimizer $x(\cdot)$ and of its velocity $\dot{x}(\cdot),$ respectively. 

Now choose a number $K_1$ such that $K_1 > b_1 + b_2\sqrt{b_3} + 1$ and $K_2$ so that $K_2 > b_5 + b_6 \sqrt{b_7} + 1.$ Then, by letting $c_1 \to 0^{+}$ in the expressions for $\tilde{K}_1$ and $\tilde{K}_2,$ we see that there will be some $c_1$ such that $\tilde{K}_1 < {K}_1$ and $\tilde{K}_2 < {K}_2$.
\end{proof}

Note that there is no conflict between the two latter propositions in the space where $\nu_{\cdot}$ is taken, since any probability measure $\nu$ on $\bar{\Omega}\times \bar{B}_{K_1}(0)$ can be regarded as a probability measure $\bar\nu$ on $\bar{\Omega}\times \R^d$ by setting $\bar\nu(E) := \nu(E\cap(\bar{\Omega}\times\bar{B}_{K_1}(0))$ for every Borel subset $E$ of $\bar{\Omega}\times \R^d$.

Henceforth we suppose that $c_1$ and $K$ are as in Corollary \ref{cor:ifmonesmallenough}.
\section{Approximation of constrained paths}\label{section:approximationofpaths}
We dedicate this section to a proposition which we will use but may also be of independent interest. 

Denote $K^{+1} = (K_1+1,K_2 +1).$
\begin{prop}\label{prop:approximatingpathsinside}
Let $\{x_k\}_{k=1}^{\infty}\subset \bar{\Omega}$ be a sequence converging to $x_0\in \bar{\Omega},$ and let $u^0(\cdot)\in \Gamma_2^{K,x_0}$. There exists a sequence $\{u^k\}_{k=1}^{\infty}\subset \Gamma_2^{K^{+1}},$ with $u^k(\cdot)\in\Gamma_2^{K^{+1},x_k}$ for every $k,$ such that $u^k(t)\to u^0(t)$ uniformly in $t\in [0,T].$
\end{prop}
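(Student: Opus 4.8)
The plan is to construct $u^k$ by a two-step modification of $u^0$: first a time-reparametrization (or affine interpolation near the endpoints) that absorbs the discrepancy between the starting points $x_k$ and $x_0$, and then, if necessary, an inward push to repair the loss of the state constraint $x_0 + \int_0^t u \in \bar\Omega$. The first step is easy when $x_0$ lies in the interior of $\bar\Omega$, or more generally when the constrained trajectory does not hug the boundary; the delicate case is when $x^0(t) := x_0 + \int_0^t u^0(\tau)\,d\tau$ spends time on $\partial\Omega$, because then even a tiny outward perturbation of the initial point violates the constraint, and we must correct for this while keeping the velocity and acceleration bounds from degrading by more than the allowed slack of $1$ in each coordinate of $K$.

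First I would set $x^0(\cdot)$ as above and, using the $C^3$ regularity of $\partial\Omega$ and the signed distance $b_\Omega$, work inside the tubular neighborhood $\Sigma_{\rho_0}$ where $b_\Omega\in C^2$. The idea is to define a small inward vector field: near the boundary, $-Db_\Omega(x)$ points inward, and $b_\Omega$ is essentially the (signed) distance. Given the trajectory $x^0$, I would consider the shifted curve $y_k(t) = x^0(t) + \phi_k(t)$, where $\phi_k(0) = x_k - x_0 =: \eps_k$ (so $|\eps_k| \to 0$) and $\phi_k$ is chosen to interpolate from $\eps_k$ at $t=0$ back to $0$, quickly but not too quickly, on a time scale $\tau_k \to 0$ to be tuned; e.g. $\phi_k(t) = \eps_k \,\psi(t/\tau_k)$ for a fixed smooth cutoff $\psi$ with $\psi(0)=1$, $\psi\equiv 0$ on $[1,\infty)$. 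Then $u^k := \dot y_k = u^0 + \dot\phi_k$ has $\|\dot\phi_k\|_\infty \lesssim |\eps_k|/\tau_k$ and $\|\ddot\phi_k\|_\infty \lesssim |\eps_k|/\tau_k^2$; choosing $\tau_k$ so that $|\eps_k|/\tau_k^2 \to 0$ (for instance $\tau_k = |\eps_k|^{1/3}$) makes both corrections vanish, so the velocity and acceleration bounds exceed $K_1, K_2$ by arbitrarily little for large $k$ — in particular eventually they are within the slack $(+1,+1)$ — and $u^k \to u^0$ uniformly. The remaining issue is that $y_k(t)$ need not lie in $\bar\Omega$: on the portion of $[0,\tau_k]$ where $x^0(t)\in\partial\Omega$ or near it, the shift by $\phi_k$ may push outward.

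To fix the constraint I would additionally bend the curve inward by an amount comparable to the potential outward excursion. Concretely, let $P:\Sigma_{\rho_0}\to\partial\Omega$ be the nearest-point projection and replace $y_k$ by $z_k(t) = y_k(t) - \lambda_k(t)\, Db_\Omega(P(y_k(t)))$, where $\lambda_k(t)\ge 0$ is a smooth bump supported on (a slight enlargement of) $[0,\tau_k]$, equal to a constant $c|\eps_k|$ in the middle and tapering to $0$ at both ends, with $c$ chosen from the interior-cone / uniform-ball condition for $\partial\Omega$ so that $z_k(t)\in\bar\Omega$ for all $t$ (this uses that $b_\Omega(x^0(t))\le 0$, that $|\phi_k|\le|\eps_k|$, and that moving a distance $\sim c|\eps_k|$ in the $-Db_\Omega$ direction decreases $b_\Omega$ by $\sim c|\eps_k|$ up to the $C^2$ error, which is quadratically small). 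One checks $z_k(0) = x_k$ provided the tapering at $t=0$ is consistent with $\lambda_k(0)=0$ — if $x_k\notin\partial\Omega$ one can take $\lambda_k(0)=0$ directly; if $x_k\in\partial\Omega$ one must start $\lambda_k$ at $0$ and let it grow, which is fine since $x_k = x^0(0)+\eps_k \in\bar\Omega$ already handles that sub-case by a symmetric argument. The derivative estimates for $z_k$ are of the same order as for $y_k$: $\|\dot\lambda_k\|_\infty \lesssim |\eps_k|/\tau_k$, $\|\ddot\lambda_k\|_\infty\lesssim|\eps_k|/\tau_k^2$, and the chain-rule terms from differentiating $Db_\Omega\circ P$ along $y_k$ are bounded by the $C^2$ norm of $b_\Omega$ times $\|\dot y_k\|_\infty \le K_1 + o(1)$, all multiplied by $\lambda_k$ or $\dot\lambda_k$ which are $o(1)$. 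Hence again $u^k := \dot z_k \to u^0$ uniformly, $|\dot u^k| \le K_2 + o(1)$, $|u^k|\le K_1 + o(1)$, so for $k$ large $u^k\in\Gamma_2^{K^{+1},x_k}$; for the finitely many small $k$ one can crudely join $x_k$ to $x^0(\tau_{k_0})$ by any admissible short path, or simply note the statement only needs the tail.

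The main obstacle is the interaction between the two corrections near $\partial\Omega$: the inward bend $\lambda_k$ must be large enough to beat the worst-case outward push of $\phi_k$ at every time in the support, yet small enough (and slowly varying enough) that the acceleration stays under $K_2+1$; getting these to be compatible hinges on the quantitative geometry of $\partial\Omega$ (uniform interior ball radius, bounds on $D^2 b_\Omega$) and on the scaling choice $\tau_k = |\eps_k|^{1/3}$ (or similar) that keeps both $|\eps_k|/\tau_k$ and $|\eps_k|/\tau_k^2$ going to $0$. I would expect the bulk of the work to be the careful bookkeeping showing $z_k(t)\in\bar\Omega$ for all $t\in[0,T]$ — trivially true for $t\ge\tau_k$ since there $z_k = x^0$ — and the uniform derivative bounds, which is exactly the "more geometric flavor" argument the introduction advertises.
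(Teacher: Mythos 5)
Your construction (absorb the initial discrepancy by a shift $\phi_k(t)=\eps_k\psi(t/\tau_k)$ supported on a vanishing time interval $\tau_k=|\eps_k|^{1/3}$, then repair the state constraint by bending inward along $-Db_\Omega$) is genuinely different from the paper's proof, which works in boundary-flattening charts whose last coordinate is the distance to $\partial\Omega$, partitions $[0,T]$ into a \emph{fixed} number of intervals, and defines the approximant recursively by a multiplicative rescaling of the distance coordinate, so that the discrepancy is propagated (and amplified by a bounded factor) over all of $[0,T]$ rather than killed near $t=0$. Your scaling bookkeeping for the velocity and acceleration budgets ($|\eps_k|/\tau_k\to0$, $|\eps_k|/\tau_k^2\to0$, chain-rule terms multiplied by $\lambda_k$ or $\dot\lambda_k$) is sound, the interior case and the ``finitely many small $k$'' remark are fine (take $u^k\equiv0$ there), and the needed regularity of $Db_\Omega\circ P$ is available since $\partial\Omega\in\s{C}^3$.

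However, there is a genuine gap exactly at the crux you flag: the verification that $z_k(t)\in\bar\Omega$ for small $t>0$ when $x_0\in\partial\Omega$. Your argument for the constraint uses only the crude bound $|\phi_k(t)|\le|\eps_k|$ on the outward push together with a correction $\lambda_k$ that is a \emph{plateau} of height $c|\eps_k|$ ``in the middle'' and tapers to $0$ at $t=0$; but the constraint can already be violated in the taper region. Concretely, if $\Omega$ is locally $\{c>a^2/2\}$, $x_0=(0,0)$, $x_k=(\delta_k,\delta_k^2/2)$ (both on $\partial\Omega$) and $x^0$ moves along the boundary in the $+a$ direction with speed $K_1$, then $b_\Omega(y_k(t))\approx K_1\delta_k t>0$ for every $t>0$, so any admissible $\lambda_k$ must satisfy $\lambda_k(t)\ge b_\Omega(y_k(t))^+$ immediately after $t=0$ while $\lambda_k(0)=0$; your dismissal of the case $x_k\in\partial\Omega$ by ``a symmetric argument'' does not address this, and with only the bound $|\phi_k|\le|\eps_k|$ no taper with $\dot\lambda_k=o(1)$ can be justified (a worst-case outward excursion of size $\sim|\eps_k|$ occurring at times $t\ll|\eps_k|$ would force $\dot\lambda_k\gtrsim1$). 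What saves the construction — and what is missing from your proposal — is a refined geometric estimate: writing $g(t)=b_\Omega(y_k(t))-b_\Omega(x^0(t))$ one has $g(0)=b_\Omega(x_k)-b_\Omega(x_0)\le0$ and $|\dot g|\le C\big(K_1|\eps_k|+|\eps_k|/\tau_k\big)$ (using that $Db_\Omega$ is Lipschitz in the tube and $|y_k-x^0|\le|\eps_k|$), so since $b_\Omega(x^0(t))\le0$ the excursion $b_\Omega(y_k(t))^+$ grows only at the rate $o(1)$ per unit time; accordingly $\lambda_k$ must be taken as a \emph{ramp} of slope comparable to $K_1|\eps_k|+|\eps_k|/\tau_k$ near $t=0$ (not a plateau reached after a taper), and one must also localize the correction to the tubular neighborhood where $P$ and $Db_\Omega$ are defined, and handle re-entries of $x^0$ into that neighborhood. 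Without this estimate the step ``$z_k(t)\in\bar\Omega$ for all $t$'' — which you yourself identify as the bulk of the work — is not established, so the proposal as written does not yet constitute a proof.
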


Before proceeding with the proof of Proposition \ref{prop:approximatingpathsinside}, we mention the following:
\begin{lemma} \label{lem:charts}
	Let $\Omega$ be a domain of class $\s{C}^3$.
	Then for each $\xi \in \partial{\Omega}$, there exist open sets $U \ni \xi,V \subset \bb{R}^d$ and a diffeomorphism $\psi : U \to V \subset \bb{R}^d$ such that the following conditions hold:
	\begin{enumerate}
		\item $\psi$ and $\psi^{-1}$ are both $\s{C}^2$;
	\item $\psi$ preserves boundary and interior of $\Omega$ in the following sense:
	\begin{equation*}
		\psi \big(U \cap \overline{\Omega}\big) = V \cap \big(\bb{R}^{d-1} \times [0,\infty)\big)
		\quad \text{and} \quad
		\psi \big(U \cap \partial{\Omega}\big) = V \cap \big(\bb{R}^{d-1} \times \{0\}\big);
	\end{equation*}
	\item if $e_1,\ldots,e_d$ are the standard basis vectors in $\bb{R}^d$ and if $x \in U$, then $\psi(x) \cdot e_d = d(x,\partial \Omega)$ if $x \in \overline{\Omega}$ and $\psi(x) \cdot e_d = -d(x,\partial \Omega)$ if $x \notin \overline{\Omega}$.
	\end{enumerate}
\end{lemma}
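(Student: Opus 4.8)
The plan is to realize $\psi$ as the inverse of a normal‑coordinate (tubular‑neighborhood) parametrization of a neighborhood of $\xi$; the only nonelementary input is the classical regularity of the signed distance function of a $\s{C}^3$ domain.

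First I would recall the relevant facts about the signed distance. Write $b(x)$ for the signed distance of $x$ to $\partial\Omega$, with the convention $b>0$ on $\Omega$ and $b<0$ on $\R^d\setminus\overline{\Omega}$, so that $b(x)=d(x,\partial\Omega)$ for $x\in\overline{\Omega}$ near $\partial\Omega$ and $b(x)=-d(x,\partial\Omega)$ otherwise. Since $\partial\Omega$ is of class $\s{C}^3$, there is $\rho_0>0$ (which we may take to be the one appearing in $\Sigma_{\rho_0}$ after the statement of Theorem \ref{thm:cardcannarsa}, shrinking it if necessary) such that $b\in \s{C}^3(\Sigma_{\rho_0})$ with $|\nabla b|\equiv 1$ on $\Sigma_{\rho_0}$; the restriction $n:=\nabla b|_{\partial\Omega}$ is then the inward unit normal field on $\partial\Omega$, of class $\s{C}^2$; and for every $z\in\partial\Omega$ and $|s|<\rho_0$ the point $z+s\,n(z)$ lies in $\Sigma_{\rho_0}$, has $z$ as its unique nearest point on $\partial\Omega$, and satisfies $b(z+s\,n(z))=s$. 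These are standard (cf.\ the discussion following Theorem \ref{thm:cardcannarsa}).

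Next, fix $\xi\in\partial\Omega$ and choose a $\s{C}^3$ local parametrization $\sigma:\Theta\to\partial\Omega$ of the hypersurface $\partial\Omega$, where $\Theta\subset\R^{d-1}$ is open, $\sigma(0)=\xi$, and $\sigma$ is a $\s{C}^3$ diffeomorphism onto a relatively open subset $W\ni\xi$ of $\partial\Omega$; set $\varphi:=\sigma^{-1}:W\to\Theta$. Define $\Phi(y',y_d):=\sigma(y')+y_d\, n(\sigma(y'))$ for $(y',y_d)\in\Theta\times(-\rho_0,\rho_0)$. Because $n$ is $\s{C}^2$ on $\partial\Omega$ and $\sigma$ is $\s{C}^3$, the map $\Phi$ is $\s{C}^2$. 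Its differential at $0$ has columns $\partial_{y_1}\sigma(0),\dots,\partial_{y_{d-1}}\sigma(0)$, which form a basis of $T_\xi\partial\Omega$, together with $n(\xi)$, which is a unit vector orthogonal to $T_\xi\partial\Omega$; hence $D\Phi(0)$ is invertible. By the inverse function theorem, after shrinking the domain I obtain an open set $V\ni 0$ in $\R^d$ such that $\Phi|_V$ is a $\s{C}^2$ diffeomorphism onto an open set $U\ni\xi$, and I may shrink further so that $U\subset\Sigma_{\rho_0}$ and the $y_d$‑coordinate on $V$ stays in $(-\rho_0,\rho_0)$. Setting $\psi:=(\Phi|_V)^{-1}:U\to V$, both $\psi$ and $\psi^{-1}=\Phi|_V$ are $\s{C}^2$, which is condition (1). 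For the rest, note that $b(\Phi(y',y_d))=y_d$ by the last fact recorded above, so for $x\in U$, writing $\psi(x)=(y',y_d)$ gives $\psi(x)\cdot e_d=y_d=b(x)$; since $b(x)=d(x,\partial\Omega)$ for $x\in\overline{\Omega}\cap\Sigma_{\rho_0}$ and $b(x)=-d(x,\partial\Omega)$ for $x\in\Sigma_{\rho_0}\setminus\overline{\Omega}$, this is condition (3). Finally $x\in\overline{\Omega}\iff b(x)\ge 0\iff\psi(x)\cdot e_d\ge 0$ and $x\in\partial\Omega\iff b(x)=0\iff\psi(x)\cdot e_d=0$; since $\psi$ is a bijection from $U$ onto $V$, applying $\psi$ to $U\cap\overline{\Omega}$ and to $U\cap\partial\Omega$ yields $\psi(U\cap\overline{\Omega})=V\cap(\R^{d-1}\times[0,\infty))$ and $\psi(U\cap\partial\Omega)=V\cap(\R^{d-1}\times\{0\})$, i.e.\ condition (2).

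The only step that requires genuine care is the regularity bookkeeping in the first part: one must invoke that the signed distance of a $\s{C}^3$ domain is $\s{C}^3$ near $\partial\Omega$ — not merely $\s{C}^2$ — since one derivative is lost in passing from $b$ to $\nabla b$ and hence to the normal field $n$, and it is exactly this loss that makes the $\s{C}^3$ hypothesis the natural one for producing the $\s{C}^2$ diffeomorphism claimed. Everything after that is a routine application of the inverse function theorem together with the elementary geometry of the tubular neighborhood.
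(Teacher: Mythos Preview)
Your proof is correct and follows essentially the same tubular-neighborhood construction as the paper: your map $\Phi(y',y_d)=\sigma(y')+y_d\,n(\sigma(y'))$ is exactly the paper's $\psi^{-1}(y)=\phi^{-1}(y',0)+y^d\,n(\phi^{-1}(y',0))$ once one identifies $\sigma(y')=\phi^{-1}(y',0)$. The only difference is presentational---you invoke the $\s{C}^3$ regularity of the signed distance and the uniqueness of the nearest boundary point as standard facts, whereas the paper re-derives the nearest-point uniqueness by hand (its Step~2) and computes the Jacobian of $\psi^{-1}$ explicitly.
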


\begin{proof} See the Appendix. \end{proof}

The proof of Proposition \ref{prop:approximatingpathsinside} can be best understood by considering a simplifying case.
For the moment let $\Omega = (0,\infty)$; this is not allowed by our hypotheses because it is unbounded, but nevertheless it will be instructive.
Assuming $x_k(0) \to x(0)$, we want to construct $x_k(t)$ such that $\dot x_k(t)$ converges uniformly to $\dot x(t)$ while $x_k(t) \geq 0$ for all $t$.
If $x_k(0) \geq x(0)$, we find no difficulty letting $\dot x_k(t) = \dot x(t)$, i.e.~the paths run in parallel.
Suppose, however, that $x(0) > x_k(0) \geq 0$.
In this case we define
\begin{equation*}
	x_k(t) = \frac{x_k(0)}{x(0)}x(t).
\end{equation*}
Then $x_k(\cdot)$ is $\s{C}^{1,1}$, and we have an estimate:
\begin{equation*}
	\sup_{t \in [0,T]}| \dot x_k(t) - \dot x(t)| \leq \sup_{t \in [0,T]} \bigg|\frac{x_k(0) - x(0)}{x(0)}\bigg| | \dot x(t)| 
	\leq K_1\abs{\frac{x_k(0) - x(0)}{x(0)}}.
\end{equation*}
Since $x_k(t) \geq 0$ for all $t$, we see that $x_k(\cdot)$ satisfies the desired properties.

For a general (but bounded) domain $\Omega$, the main idea of the proof is to follow this example whenever $x(t)$ is close to the boundary.
Since the boundary is curved and multi-dimensional, we will choose local coordinates in which to imitate this simpler case.
In the simple example above, $x_k(t)$ is identical to the distance from the boundary; in higher dimensions, we will use Lemma \ref{lem:charts} to work in coordinates where the last one is the distance from the boundary.

With this basic idea in mind, the steps of the proof are as follows:
\begin{enumerate}
	\item Cover the boundary $\partial \Omega$ by neighborhoods with local coordinates as in Lemma \ref{lem:charts}.
	\item Divide $\Omega$ into a ``collar region,'' denoted $V$, and an ``interior region'' $\Omega \setminus V$.
	When a trajectory is in the collar region, we represent it using local coordinates from the first step, while if it is in the interior region, we can use standard coordinates.
	Partition the time horizon $[0,T]$ using evenly spaced intervals $[t_\ell,t_{\ell+1}]$, such that the trajectory can be represented in a single coordinate system on each $[t_\ell,t_{\ell+1}]$.
	\item Define $x_k(t)$ recursively: given $x_k(t)$ up to $t = t_\ell$, define it on $[t_\ell,t_{\ell+1}]$ by using the simple example above as a guide when $x_k(t)$ must stay in the collar region $V$.
	Show that $\max_{t \in [t_\ell,t_{\ell+1}]}|\dot x_k(t) - \dot x(t)|$ is controlled by $\max_{t \in [0,t_{\ell}]}|\dot x_k(t) - \dot x(t)|$.
	(Actually, it is a bit more complicated than this; see Assumption \ref{as:ind assms} below.)
	\item Conclude that $\dot x_k(t)$ converges uniformly to $\dot x(t)$, which implies $x_k(t)$ converges uniformly to $x(t)$ as well.
\end{enumerate}
We hope this outline will keep the reader oriented through the technical details, which we now give below.

\begin{proof}[Proof of Proposition \ref{prop:approximatingpathsinside}]
	Let $x(t):=x^{u^0}(t),$ $0\leq t\leq T.$ Clearly, it will be sufficient to show that there is a sequence $x_k(\cdot) \in \s{C}^{1,1}([0,T])$ such that $x_k(0) = x_k$ for all $k$, and $x_k(t) \to x(t)$,  $\dot{x}_k(t) \to \dot{x}(t)$ uniformly in $t\in[0,T].$

\firststep
Since $\Omega$ is a domain of class $\s{C}^3$, Lemma \ref{lem:charts} applies.
For each $\xi \in \partial{\Omega}$ we can find $r(\xi) > 0$, an open set $U_\xi \subset \bb{R}^d$, and a diffeomorphism $\psi_\xi : U_\xi \to B(0,2r(\xi)) \subset \bb{R}^d$ such that $\psi_\xi$ and $\psi_\xi^{-1}$ are both $\s{C}^2$ and we have $\psi_{\xi}(\xi) = 0$,
\begin{equation*}
	\psi_\xi \del{U_\xi \cap \overline{\Omega}} = B(0,2r(\xi)) \cap \del{\bb{R}^{d-1} \times \intco{0,\infty}}
	\quad \text{and} \quad
	\psi_\xi \del{U_\xi \cap \partial{\Omega}} = B(0,2r(\xi)) \cap \del{\bb{R}^{d-1} \times \cbr{0}};
\end{equation*}
if $e_1,\ldots,e_d$ are the standard basis vectors in $\bb{R}^d$ and if $x \in U_\xi$, then $\psi_\xi(x) \cdot e_d \geq 0$ if and only if $x \in \overline{\Omega}$, in which case $\psi_\xi(x) \cdot e_d = d\del{x,\partial \Omega}$.
Let $\hat r(\xi) > 0$ be such that 
\begin{equation*}
	B(\xi,4\hat r(\xi)) \subset U_\xi
	\quad  \text{and} \quad
	\psi_{\xi}\del{B(\xi,4\hat r(\xi))} \subset B\del{0,r(\xi)}.
\end{equation*}
Since $\overline{\Omega}$ is compact we can find $\xi_1,\ldots,\xi_N$ such that $\partial{\Omega} \subset \bigcup_1^N B\del{\xi_j,\hat r(\xi_j)} =: \hat U$.
We will denote $r_j = r(\xi_j)$, $\hat r_j = \hat r(\xi_j)$, $U_j = U_{\xi_j}$, $B_j = B\del{\xi_j,\hat r(\xi_j)}$, and $\psi_j = \psi_{\xi_j}$, and we will define $U = \bigcup_{j=1}^N U_j$.
Pick some $j_0$ such that $x_0 \in B_{j_0}$.
We take
\begin{equation} \label{def:C}
	C = \max\cbr{\|D\psi_j\|_{\infty},\|D^2\psi_j\|_{\infty},\|D\psi_j^{-1}\|_{\infty},\|D^2\psi_j^{-1}\|_{\infty} : j = 1,\ldots,N}.
\end{equation}
(Note that $C \geq 1$.)
Set 
\[ r = \min\big\{\min\cbr{r(\xi_j) : j=1,\ldots,N},\frac{1}{2}\big\},
\quad 
\hat r = \min\big\{\min\cbr{\hat r(\xi_j) : j=1,\ldots,N},\frac{1}{2}\big\}
\]
and
\begin{equation}	\label{def:K0}
K_0 = \max\{K_1,K_2,1\}.
\end{equation}

\nextstep
Define
\begin{equation*}
	V = \big\{x \in \overline{\Omega} : d(x,\partial\Omega) \leq 2\hat r\big\}.
\end{equation*}
If $x \in V$ then there exists $y \in \partial \Omega$ such that $\abs{x-y} \leq 2\hat r$; then if
$y \in B\del{\xi_j,\hat r(\xi_j)}$, we have
 $x \in B\del{\xi_j,3\hat r(\xi_j)}$.
 It follows that $V \subset \bigcup_{j=1}^N B\del{\xi_j,3\hat r(\xi_j)}$.

Let $L = \ceil*{K_1 T\hat r^{-1}} + 1$ and set $t_\ell = \frac{\ell T}{L}$ for $\ell = 0,1,\ldots,L$.
Let 
\begin{equation} \label{eq:M constant}
	M := \frac{3C^5K_0}{\hat r^2}
\end{equation}
and let $\varepsilon > 0$ be an arbitrarily small number such that
\begin{equation} \label{eq:xk-x0 small}
	(3M)^{L}\varepsilon < \min\cbr{\hat r,r}.
\end{equation}
We may assume without loss of generality that
\begin{equation*} 
	\abs{x_k - x_0} < \varepsilon.
\end{equation*}

Fix $\ell \in \cbr{0,1,\ldots,L}$.
Notice that
\begin{equation} \label{eq:x(t) - x(tl)}
	|x(t) - x(t_\ell)| \leq K_1(t-t_\ell) \leq \frac{K_1 T}{L} < \hat r \quad \forall t \in [t_\ell,t_{\ell+1}].
\end{equation}
We know that either $d\del{x(t_\ell),\partial\Omega} \geq 2\hat r$ or else $x(t_\ell) \in B\del{\xi_{j_\ell},3\hat r(\xi_{j_\ell})}$ for some $j_\ell$.
In the first case, \eqref{eq:x(t) - x(tl)} implies that $d\del{x(t),\partial\Omega} \geq \hat r$ for all $t \in [t_\ell,t_{\ell+1}]$;
in the second case,
\eqref{eq:x(t) - x(tl)} implies that $x(t) \in B\del{\xi_{j_\ell},4\hat r(\xi_{j_\ell})} \subset U_{j_\ell}$ for all $t \in [t_\ell,t_{\ell+1}]$.

\nextstep
To define $x_k(t)$, we will proceed in a recursive, piece-wise fashion.
First we will assume that for some given $\ell \in \cbr{0,1,\ldots,L-1}$, $x_k(t)$ has been defined on $[0,t_\ell]$ in such a way that, if $\ell \geq 1$, we have $x_k(\cdot) \in \s{C}^{1,1}([0,t_\ell];\overline{\Omega})$, and so that the following assumptions are satisfied:
\begin{assm} \label{as:ind assms}
	\begin{enumerate}
	\item[(a)] If $\ell \geq 1$ and $x(t_\ell) \notin V$, then $x_k(t_\ell) \in \Omega$ and
	 $\od{}{t} x_k(t_\ell) = \dot x(t_\ell)$.
	\item[(b)] If $\ell \geq 1$ and $x(t_\ell) \in V$, so that $x(t_\ell) \in B(\xi_{j_\ell},3\hat r(j_\ell))$ for some $j_\ell$, then $x_k(t_\ell) \in U_\ell$, and for $s = 1,\ldots,d-1$, we have
	\begin{equation} \label{eq:collar derivs agree}
		\od{}{t} \psi_{j_\ell}(x_k(t)) \cdot e_s|_{t = t_\ell} = \od{}{t} \psi_{j_\ell}(x(t)) \cdot e_s|_{t = t_\ell}.
	\end{equation}
	Moreover, \begin{enumerate}
		\item[(i)] if $\psi_{j_\ell}(x(t_{\ell})) \cdot e_d > 0$, then
	\begin{equation} \label{eq:collar deriv1}
		\od{}{t} \psi_{j_\ell}(x_k(t)) \cdot e_d|_{t = t_\ell} = \frac{\psi_{j_\ell}(x_k(t_{\ell})) \cdot e_d}{\psi_{j_\ell}(x(t_{\ell}))  \cdot e_d}\od{}{t} \psi_{j_\ell}(x(t)) \cdot e_d|_{t = t_\ell};
	\end{equation} 
	\item[(ii)] if $\psi_{j_\ell}(x(t_{\ell})) \cdot e_d = 0$, then
	\begin{equation*} 
		\od{}{t} \psi_{j_\ell}(x_k(t)) \cdot e_d|_{t = t_\ell} = 0.
	\end{equation*}
	\end{enumerate}
	\item[(c)] We have the following estimates:
	\begin{equation} \label{eq:xktl - xtl}
		\begin{split}
			|x_k(t) - x(t)| &\leq (3M)^{\ell}\varepsilon \quad \forall t \in [0,t_\ell],\\
			|\dot x_k(t) - \dot x(t)| &\leq (3M)^{\ell}\varepsilon \quad \forall t \in [0,t_\ell],\\
			|d(x_k(t_\ell),\partial\Omega) - d(x(t_\ell),\partial\Omega)| &\leq (3M)^{\ell}\varepsilon \ d\del{x(t_\ell),\partial\Omega},
		\end{split}
	\end{equation}
	where $M$ is defined in \eqref{eq:M constant}.
\end{enumerate}
\end{assm}
We will now define $x_k(t)$ on $[t_\ell,t_{\ell+1}]$ in such a way that $x_k(\cdot) \in \s{C}^{1,1}\del{[0,t_{\ell+1}];\overline{\Omega}}$ and the same assumptions are satisfied with $t_{\ell}$ replaced by $t_{\ell+1}$.
We divide into four cases.

\firstcase
First, consider the case in which $x(t_\ell) \notin V$ and $x(t_{\ell+1}) \notin V$.
Then we define
\begin{equation*}
	x_k(t) = x_k(t_\ell) - x(t_\ell) + x(t) \quad \forall t \in [t_\ell,t_{\ell+1}].
\end{equation*}
It follows that
\begin{equation*}
	|x_k(t) - x(t)| = \abs{x_k(t_\ell) - x(t_\ell)}, \
	\dot{x}_k(t) = \dot{x}(t) \quad \forall t \in [t_\ell,t_{\ell+1}].
\end{equation*}
Since Assumption \ref{as:ind assms}(a) is satisfied, we see that $x_k$ is $\s{C}^{1,1}$ on all of $[0,t_{\ell+1}]$.
Moreover,
\begin{equation*}
	\abs{x_k(t_\ell) - x(t_\ell)} < (3M)^\ell \varepsilon < \hat r
\end{equation*}
by Assumption \ref{as:ind assms}(c) and equation \eqref{eq:xk-x0 small}.
Since $d\del{x(t),\partial\Omega} \geq \hat r$, it follows that $x_k(t) \in \Omega$ for all $t \in [t_\ell,t_{\ell+1}]$.
It is straightforward to check that Assumption \ref{as:ind assms} is satisfied with $t_\ell$ replace by $t_{\ell + 1}$.

\nextcase
Next, consider the case in which $x(t_\ell) \notin V$ and $x(t_{\ell+1}) \in V$, so we have $x(t_{\ell+1}) \in B\del{\xi_{j_{\ell+1}},3\hat r(j_{\ell+1})}$.
We will let $\lambda \in (0,1)$ be small (depending on $\ell$) but fixed.
Define $t_{\ell,\lambda} = \frac{(\ell + 1 - \lambda) T}{L} = t_{\ell+1} - \lambda_T$ where $\lambda_T := \frac{\lambda T}{L}$.
Now let
\begin{equation*}
	x_k(t) = x_k(t_\ell) - x(t_\ell) + x(t) \quad \forall t \in [t_\ell,t_{\ell,\lambda}].
\end{equation*}
Since Assumption \ref{as:ind assms}(a) is satisfied, we see that $x_k(\cdot)$ is $\s{C}^{1,1}$ on all of $[0,t_{\ell,\lambda}]$.
We also have
\begin{equation} \label{eq:xk-x case2}
	|x_k(t) - x(t)| = |x_k(t_\ell) - x(t_\ell)|, \
	\dot{x}_k(t) = \dot{x}(t) \quad \forall t \in [t_\ell,t_{\ell,\lambda}].
\end{equation}
We obtain the estimate
\begin{equation} \label{eq:xktl - xtl+1}
	\begin{split}
		|x_k(t_{\ell,\lambda}) - x(t_{\ell+1})|
		&\leq | x_k(t_{\ell,\lambda}) - x(t_{\ell,\lambda})|
		+ | x(t_{\ell,\lambda}) - x(t_{\ell+1})| \\
		&\leq |x_k(t_\ell) - x(t_\ell)|
		+ K_1 \lambda_T\\
		&\leq (3M)^\ell \varepsilon + K_1 \lambda_T,
	\end{split}
\end{equation}
which for $\lambda$ small enough is less than $\hat r$ by \eqref{eq:xk-x0 small}.
It follows that $x_k(t_{\ell,\lambda}) \in B(\xi_{j_{\ell+1}},4\hat r(j_{\ell+1})) \subset \psi_{j_{\ell+1}}^{-1}(B(0,r(j_{\ell+1})))$.
We also note for future reference that $d(x_k(t_{\ell,\lambda},\partial\Omega))$ $\leq \hat r + d(x(t_{\ell+1},\partial\Omega))$ $\leq 3\hat r$.

For $\lambda$ small enough we have $x(t) \in B(\xi_{j_{\ell+1}},4\hat{r}(j_{\ell+1}))$ for all $t \in [t_{\ell,\lambda},t_{\ell + 1}]$.
We define
\begin{equation*}
	\tilde x(t) = \psi_{j_{\ell+1}}(x(t)) \quad \forall t \in [t_{\ell,\lambda},t_{\ell + 1}],
	\quad
	\tilde x^j(t) = \tilde x(t) \cdot e_j, \ j = 1,\ldots,d.
\end{equation*}
Next, we will define $\tilde x_k(t)$ on $[t_{\ell,\lambda},t_{\ell + 1}]$, show that it is also in $B(0,2r(j_{\ell+1}))$, and then define $x_k(t) = \psi_{j_{\ell+1}}^{-1}(\tilde x_k(t))$ on $[t_{\ell,\lambda},t_{\ell + 1}]$.

Set
\begin{equation*}
	\begin{split}
		\tilde x_k(t_{\ell,\lambda}) &= \psi_{j_{\ell+1}}(x_k(t_{\ell,\lambda})),\\
		v_0 &:=
	\dod{-}{t-}\psi_{j_{\ell+1}}(x_k(t))|_{t=t_{\ell,\lambda}}
	= D\psi_{j_{\ell+1}}(x_k(t_{\ell,\lambda}))\dot{x}(t_{\ell,\lambda}),\\
	v_1 &:=
	\dot{\tilde x}(t_{\ell+1})
	=
	\dod{}{t}\psi_{j_{\ell+1}}(x(t))|_{t=t_{\ell+1}}
	= D\psi_{j_{\ell+1}}(x(t_{\ell+1}))\dot{x}(t_{\ell+1}).
	\end{split}
\end{equation*}
Here we denote by $\dod{-}{t-}$ the left-hand derivative, i.e.
\begin{equation*}
	\dod{-}{t-}f(t) = \lim_{s \to t^-} \frac{f(s) - f(t)}{s-t}.
\end{equation*}
Observe that $|v_0|,|v_1|$ $\leq C K_1$, and 
\begin{equation*}
	\begin{split}
		\abs{v_0 - v_1} &\leq |D\psi_{j_{\ell+1}}(x_k(t_{\ell,\lambda}))\dot{x}(t_{\ell,\lambda})
			- D\psi_{j_{\ell+1}}\del{x(t_{\ell+1})}\dot{x}(t_{\ell,\lambda})|\\
		&\quad + |D\psi_{j_{\ell+1}}\del{x(t_{\ell+1})}\dot{x}(t_{\ell,\lambda})
			- D\psi_{j_{\ell+1}}\del{x(t_{\ell+1})}\dot{x}(t_{\ell+1})|\\
		&\leq CK_1|x_k(t_{\ell,\lambda}) - x(t_{\ell+1})|
		+ C|\dot{x}(t_{\ell,\lambda}) - \dot{x}(t_{\ell+1})|\\
		&\leq CK_1 |x_k(t_\ell) - x(t_\ell)|
		+ CK_1^2 \lambda_T
		+ CK_2 \lambda_T\\
		&\leq CK_0(3M)^\ell\varepsilon + 2CK_0^2 \lambda_T,
	\end{split}
\end{equation*}
where we have used \eqref{eq:xktl - xtl+1}.
We will define
\begin{equation} \label{eq:tilde xk def}
	\tilde x_k(t) = \tilde x_k(t_{\ell,\lambda}) + (t-t_{\ell,\lambda})v_0  + \frac{1}{2}(t-t_{\ell,\lambda})^2 v
\end{equation}
with $v = v(\lambda)$ chosen in order that
\begin{equation} \label{eq:dot tilde xk def}
	\dot{\tilde x}_k^d(t_{\ell+1}) = \frac{\tilde x_k^d(t_{\ell+1})}{\tilde x^d(t_{\ell+1})}v_1^d,
	\quad
	\dot{\tilde x}_k^j(t_{\ell+1}) = v_1^j \quad  \forall j = 1,\ldots,d-1.
\end{equation}
Here for any vector $v \in \bb{R}^d$, we denote $v^j = v \cdot e_j$, the $j$th standard coordinate.
We take $\lambda$ small enough so that
\begin{equation} \label{eq:lambda small1}
	|\lambda_T| \leq \frac{\hat r}{CK_1} \quad
	\Rightarrow \quad \bigg|\frac{\lambda_T v_1^d}{2 \tilde x^d(t_{\ell+1})}\bigg| \leq \frac{1}{2}.
\end{equation}
The second inequality follows from the first after recalling that $\tilde x^d(t_{\ell+1}) = d(x(t_{\ell+1}),\partial\Omega) \geq \hat r$ and $\abs{v_1^d} \leq CK_1$.
With this assumption, we can uniquely solve for $v$ to get \eqref{eq:tilde xk def} and \eqref{eq:dot tilde xk def} with the formula
\begin{equation*}
	v^d = \frac{1}{\lambda_T}\bigg(1 - \frac{\lambda_T v_1^d}{2 \tilde x^d(t_{\ell+1})}\bigg)^{-1}
	\bigg(\frac{\tilde x_k^d(t_{\ell,\lambda}) + \lambda_T v_0^d}{\tilde x^d(t_{\ell+1})}v_1^d - v_0^d\bigg),
	\quad
	v^j = \frac{v_1^j - v_0^j}{\lambda_T}.
\end{equation*}
We estimate, using \eqref{eq:lambda small1},
\begin{equation*}
	|\lambda_T v^d| \leq 2\bigg|\frac{\tilde x_k^d(t_{\ell,\lambda}) + \lambda_T v_0^d}{\tilde x^d(t_{\ell+1})}v_1^d - v_0^d\bigg|
	\leq 2CK_1\bigg(\frac{3\hat r + CK_1\lambda_T}{\hat r} + 1\bigg)
	\leq 2CK_1\bigg(\frac{3\hat r + \hat r}{\hat r} + 1\bigg) = 10CK_1.
\end{equation*}
Now returning to the identity \eqref{eq:dot tilde xk def}, we get
\begin{equation} 
\label{eq:xktl+1 - xtl+1}
	\begin{split}
		|\dot{\tilde{x}}_k(t_{\ell+1}) - \dot{\tilde x}(t_{\ell+1})|
		&= |\dot{\tilde x}_k^d(t_{\ell+1}) - \dot{\tilde x}^d(t_{\ell+1})|
		\\
		&= \bigg|\frac{\tilde x_k^d(t_{\ell,\lambda}) - \tilde x^d(t_{\ell+1}) + \lambda_T v_0^d + \frac{1}{2}\lambda_T^2 v^d}{\tilde x^d(t_{\ell+1})}v_1^d\bigg| 
		\\
		&\leq \frac{CK_1}{\hat r}\del{C|x_k(t_{\ell,\lambda}) - x(t_{\ell+1})| + 6CK_1\lambda_T}\\
		&\leq \frac{C^2K_1}{\hat r}\del{|x_k(t_{\ell}) - x(t_{\ell})| + 7K_1\lambda_T},
	\end{split}
\end{equation}
where we have used \eqref{eq:xktl - xtl+1} and \eqref{eq:dot tilde xk def}.
Next, we note that $\dot{\tilde{x}}_k(\cdot)$ is linear.
On the other hand, by the definitions \eqref{def:C} and \eqref{def:K0}, recalling $\abs{\dot{x}(t)} \leq K_1$ and $\abs{\ddot{x}(t)} \leq K_2$ for all $t$,  we have
\begin{equation*}
	|\ddot{\tilde x}(t)| = |D^2\psi_{j_{\ell+1}}(x(t))[\dot{x}(t),\dot{x}(t)] + D\psi_{j_{\ell+1}}(x(t))\ddot{x}(t)|
	\leq
	 C(K_1^2 + K_2) \leq 2CK_0^2,
\end{equation*}
and so we deduce that, for all $t \in [t_{\ell,\lambda},t_{\ell+1}]$,
\begin{equation} \label{eq:dot tilde xk - dot tilde x}
	\begin{split}
		|\dot{\tilde x}_k(t) - \dot{\tilde x}(t)|
		&\leq \frac{t_{\ell+1} - t}{t_{\ell+1} - t_{\ell,\lambda}} |v_0 - \dot{\tilde x}(t_{\ell,\lambda})|
		+ \frac{t - t_{\ell,\lambda}}{t_{\ell+1} - t_{\ell,\lambda}}|\dot{\tilde x}_k(t_{\ell+1}) - \dot{\tilde x}(t_{\ell+1})|
		+ 2CK_0^2\lambda_T\\
		&\leq |(D\psi_{j_{\ell+1}}(x_k(t_{\ell,\lambda})) - D\psi_{j_{\ell+1}}(x_k(t_{\ell,\lambda})))\dot{x}(t_{\ell,\lambda})|
		\\
		&\quad
		+ \frac{C^2K_1}{\hat r}(|x_k(t_{\ell}) - x(t_{\ell})| + 7K_1\lambda_T) + 2CK_0^2\lambda_T\\
		&\leq \frac{2C^2K_0}{\hat r}|x_k(t_{\ell}) - x(t_{\ell})| +  \frac{9C^2K_0^2}{\hat r}\lambda_T,
	\end{split}
\end{equation}
where we have recalled \eqref{eq:xktl - xtl+1}, \eqref{eq:xk-x case2} and \eqref{eq:xktl+1 - xtl+1}.
Coupled with \eqref{eq:xk-x case2} and \eqref{eq:dot tilde xk - dot tilde x} we obtain that, for all $t \in [t_{\ell,\lambda},t_{\ell+1}]$,
\begin{equation} \label{eq:tilde xk - tilde x}
	\begin{split}
		|\tilde x_k(t) - \tilde x(t)| &\leq \abs{\tilde x_k(t_{\ell,\lambda}) - \tilde x(t_{\ell,\lambda})} + \int_{t_{\ell,\lambda}}^t \big|\dot{\tilde x}_k(s) - \dot{\tilde x}(s)\big|\dif s\\
		&\leq C |x_k(t_{\ell,\lambda}) - x(t_{\ell,\lambda})| + \int_{t_{\ell,\lambda}}^t \big|\dot{\tilde x}_k(s) - \dot{\tilde x}(s)\big|\dif s\\
		&\leq \frac{3C^2K_0}{\hat r}|x_k(t_{\ell}) - x(t_{\ell})| +  \frac{9C^2K_0^2}{\hat r}\lambda_T\\
		&\leq M(3M)^\ell \varepsilon + 3M\lambda_T \leq (3M)^{\ell+1}\varepsilon < \min\cbr{r,\hat r},
	\end{split}
\end{equation}
where we have taken $\lambda_T \leq 1$ small enough.
Therefore, $\tilde x_k(t) \in B(0,2r(j_{\ell+1}))$.
Also, recall from Step 2 that for this case, $x^d(t) = d\del{x(t),\partial \Omega} \geq \hat r$ for all $t \in [t_\ell,t_{\ell + 1}]$; thus $\tilde x_k^d(t) > 0$ for $t \in [t_{\ell,\lambda},t_{\ell+1}]$ by \eqref{eq:tilde xk - tilde x}.
Thus we can define $x_k(t) = \psi_{j_{\ell+1}}^{-1}(\tilde x_k(t)) \in \Omega$ on $[t_{\ell,\lambda},t_{\ell+1}]$, as desired.
By construction, $x_k(\cdot)$ is $\s{C}^{1,1}$ on the interval $[0,t_{\ell+1}]$; it suffices to observe that the left- and right-hand derivatives agree at $t=t_{\ell,\lambda}$.
Assumption \ref{as:ind assms}(b) is satisfied with $t_{\ell+1}$ in place of $t_\ell$; in particular, \eqref{eq:collar derivs agree} and \eqref{eq:collar deriv1} follow directly from \eqref{eq:dot tilde xk def}.
Moreover, we have the following estimates on the interval $[t_{\ell,\lambda},t_{\ell+1}]$.
First, noting that 
\begin{equation*}
	|x_k(t) - x(t)| \leq C|\tilde x_k(t) - \tilde x(t)|,
\end{equation*}
we deduce
\begin{equation*}
	|x_k(t) - x(t)| \leq \frac{3C^3K_0}{\hat r}|x_k(t_{\ell}) - x(t_{\ell})| +  \frac{9C^3K_0^2}{\hat r}\lambda_T
	\leq M(3M)^\ell \varepsilon + 3M\lambda_T.
\end{equation*}
Then we compute
\begin{equation} \label{eq:dot xk - dot x}
	\dot x_k(t) - \dot x(t) = D\psi_{j_{\ell+1}}^{-1}(\tilde{x}_k(t))\big(\dot{\tilde x}_k(t) - \dot{\tilde x}(t)\big)
	+ \big(D\psi_{j_{\ell+1}}^{-1}(\tilde x_k(t)) - D\psi_{j_{\ell+1}}^{-1}(\tilde x(t))\big)\dot{\tilde x}(t).
\end{equation}
Applying estimates \eqref{eq:dot tilde xk - dot tilde x} and \eqref{eq:tilde xk - tilde x}, and recalling \eqref{def:C} and \eqref{def:K0}, we get
\begin{equation*}
	\begin{split}
		\abs{\dot x_k(t) - \dot x(t)} &\leq \frac{2C^3K_0}{\hat r}|x_k(t_{\ell}) - x(t_{\ell})| +  \frac{9C^3K_0^2}{\hat r}\lambda_T + C\abs{\tilde x_k(t) - \tilde x(t)}K_1\\
		&\leq \frac{2C^3K_0}{\hat r}|x_k(t_{\ell}) - x(t_{\ell})| +  \frac{9C^3K_0^2}{\hat r}\lambda_T + \frac{3C^3K_0^2}{\hat r}|x_k(t_{\ell}) - x(t_{\ell})| +  \frac{9C^3K_0^3}{\hat r}\lambda_T,
	\end{split}
\end{equation*}
which, after recalling \eqref{eq:M constant}, using  Assumption \ref{as:ind assms}(c), and doing some straightforward estimates, becomes
\begin{equation*}
	|\dot x_k(t) - \dot x(t)|
	\leq 2M(3M)^\ell \varepsilon + 6MK_0^2\lambda_T.
\end{equation*}
Finally, applying \eqref{eq:tilde xk - tilde x}, we get
\begin{equation*}
	\frac{|x_k(t_{\ell+1}) - x(t_{\ell+1})|}{d\del{x(t_{\ell+1}),\partial\Omega}} \leq \frac{1}{\hat r}\abs{x_k(t_{\ell+1}) - x(t_{\ell+1})}
	\leq \frac{3C^3K_0}{\hat r^2}|x_k(t_{\ell}) - x(t_{\ell})| +  \frac{9C^3K_0^2}{\hat r^2}\lambda_T,
\end{equation*}
which implies
\begin{equation*}
	|d(x_k(t_{\ell+1}),\partial\Omega) - d(x(t_{\ell+1}),\partial\Omega)| \leq (M(3M)^{\ell}\varepsilon + 3M\lambda_T) \ d(x(t_{\ell+1}),\partial\Omega).
\end{equation*}
By taking $\lambda$ small enough, and recalling \eqref{eq:xk-x case2}, we see that equation \eqref{eq:xktl - xtl} holds with $\ell+1$ in place of $\ell$.

\nextcase Suppose now that $x(t_\ell) \in V$ and $x(t_{\ell+1}) \notin V$.
We then have $x(t_\ell) \in B(\xi_{j_\ell},3\hat r(j_\ell))$.
From Step 2, we see that $x(t) \in B(\xi_{j_\ell},4\hat r(j_\ell))$ for all $t \in [t_\ell,t_{\ell+1}]$, hence also for all $t \in [t_\ell - \delta,t_{\ell+1}]$ for $\delta > 0$ small enough (or for $\delta = 0$ if $\ell = 0$).
In addition, if $\ell \geq 1$, then by Assumption \ref{as:ind assms}(b), we also have, possibly after making $\delta > 0$ smaller, $x_k(t) \in U_\ell$ for $t \in [t_\ell-\delta,t_{\ell}]$; if $\ell = 0$ then we have $x_k(t_\ell) = x_k \in U_\ell$ by the setup in Step 2.
Define
\begin{equation*}
	\tilde x(t) = \psi_{j_\ell}(x(t)) \in B(0,r(j_\ell)) \quad \forall t \in [t_\ell-\delta,t_{\ell+1}],
	\quad 
	\tilde x_k(t) = \psi_{j_\ell}(x_k(t)) \quad \forall t \in [t_\ell-\delta,t_{\ell}].
\end{equation*}
As in the previous step, suppose $0<\lambda <1$ and $\lambda$ is a small but fixed number, and let $t_{\ell,\lambda}$ $= \frac{(\ell + 1 - \lambda) T}{L}$ $= t_{\ell+1} - \lambda_T$ where $\lambda_T := \frac{\lambda T}{L}$.
By taking $\lambda$ small enough, we have that $x(t_{\ell,\lambda}) \notin V$.
For $t \in [t_\ell,t_{\ell,\lambda}]$ define
\begin{equation} \label{eq:tilde xk def case 3}
	\tilde x_k^s(t) = \tilde x_k^s(t_\ell) - \tilde x^s(t_\ell) + \tilde x^s(t) \quad \forall s = 1,\ldots,d-1,
	\quad
	\tilde x_k^d(t) = \begin{cases}
		\frac{\tilde x^d_k(t_\ell)}{\tilde x^d(t_\ell)}\tilde x^d(t), &\text{if}~\tilde x^d(t_\ell) \neq 0,\\
		\tilde x^d(t) &\text{if}~\tilde x^d(t_\ell) = 0.
	\end{cases}
\end{equation}
Then for all $t \in [t_\ell,t_{\ell,\lambda}]$, we have
\begin{equation} \label{eq:dot tilde xk def case 3}
	\dot{\tilde x}_k^s(t) = \dot{\tilde x}^s(t) \quad \forall s = 1,\ldots,d-1,
	\quad
	\dot{\tilde x}_k^d(t) = \begin{cases}
		\frac{\tilde x^d_k(t_\ell)}{\tilde x^d(t_\ell)}\dot{\tilde x}^d(t), &\text{if}~\tilde x^d(t_\ell) \neq 0,\\
		\dot{\tilde x}^d(t) &\text{if}~\tilde x^d(t_\ell) = 0.
	\end{cases}
\end{equation}
If $\ell \geq 1$ we must check that \eqref{eq:dot tilde xk def case 3} holds even at $t = t_\ell$; in particular we need to show the formula holds as $t$ approaches $t_\ell$ from the right or the left.
Since Assumption \ref{as:ind assms}(b) is satisfied, we see that
\begin{equation} \label{eq:dot tilde xk left case 3}
	\dot{\tilde x}_k^s(t_\ell-) = \dot{\tilde x}^s(t_\ell-) \quad \forall s = 1,\ldots,d-1,
	\quad
	\dot{\tilde x}_k^d(t_\ell-) = \begin{cases}
		\frac{\tilde x^d_k(t_\ell)}{\tilde x^d(t_\ell)}\dot{\tilde x}^d(t_\ell), &\text{if}~\tilde x^d(t_\ell) \neq 0,\\
		0 &\text{if}~\tilde x^d(t_\ell) = 0,
	\end{cases}
\end{equation}
where $f(t-) := \lim_{s \to t^-} f(s)$.
Equation \eqref{eq:dot tilde xk left case 3} accords with \eqref{eq:dot tilde xk def case 3} as $t \to t_\ell$.
Indeed, we can check that $\dot{\tilde x}^d(t_\ell) = 0$ when $\tilde x^d(t_\ell) = 0$ by observing that $\tilde x^d(t)$ is necessarily at a minimum whenever it is zero; the other cases are straightforward.

Now if $\tilde x^d(t_\ell) > 0$ we have (by algebraic rearrangement)
\begin{equation*}
	\tilde x_k^d(t) - \tilde x^d(t) = \tilde x^d_k(t_\ell) - \tilde x^d(t_\ell) + \del{\tilde x^d_k(t_\ell) - \tilde x^d(t_\ell)}\frac{\tilde x^d(t) - \tilde x^d(t_\ell)}{\tilde x^d(t_\ell)}
\end{equation*}
which, combined with $\tilde x_k^s(t) - \tilde x^s(t) = \tilde x^s_k(t_\ell) - \tilde x^s(t_\ell)$ for all $s = 1,\ldots,d-1$, implies
\begin{equation*}
	\begin{split}
		|\tilde x_k(t) - \tilde x(t)| &\leq
		|\tilde x_k(t_\ell) - \tilde x(t_\ell)| + \abs{\tilde x^d_k(t_\ell) - \tilde x^d(t_\ell)}\frac{|\tilde x_k^d(t_\ell) - \tilde x^d(t_\ell)|}{\tilde x^d(t_\ell)}
		\\
	& \leq C|x_k(t_\ell) - x(t_\ell)|
	+ \hat r\frac{\abs{d(x_k(t_\ell),\partial\Omega) - d(x(t_\ell),\partial\Omega)}}{d(x(t_\ell),\partial\Omega)}\\
	&\leq \del{C+\hat r}(3M)^\ell \varepsilon
	\leq 2C(3M)^\ell \varepsilon \quad \forall t \in [t_\ell,t_{\ell,\lambda}],
	\end{split}
\end{equation*}
using \eqref{eq:xktl - xtl} from Assumption \ref{as:ind assms}(c) and \eqref{eq:x(t) - x(tl)} from Step 2.
If $\tilde x^d(t_\ell) = 0$ we have
\begin{equation*}
	|\tilde x_k(t) - \tilde x(t)| \leq |\tilde x_k(t_\ell) - \tilde x(t_\ell)|
	\leq C|x_k(t_\ell) - x(t_\ell)|
	\leq C(3M)^\ell \varepsilon \quad \forall t \in [t_\ell,t_{\ell,\lambda}].
\end{equation*}
In either case,
\begin{equation*}
	|\tilde x_k(t) - \tilde x(t)| \leq 2C(3M)^\ell \varepsilon \leq (3M)^{\ell+1} \varepsilon < r \quad \forall t \in [t_\ell,t_{\ell,\lambda}]
\end{equation*}
using \eqref{eq:xk-x0 small}.
It follows that $\tilde x_k(t) \in B\del{0,2r(j_\ell)}$ for all $t \in [t_\ell,t_{\ell,\lambda}]$.
Thus we can define
\begin{equation*}
	x_k(t) = \psi_{j_\ell}^{-1}(\tilde x_k(t)) \quad \forall t \in [t_\ell,t_{\ell,\lambda}].
\end{equation*}
As for the derivative, we have $\dot{\tilde x}_k(t) = \dot{\tilde x}(t)$ for all $t \in [t_\ell,t_{\ell,\lambda}]$ if $\tilde x^d(t_\ell) = 0$; if $\tilde x^d(t_\ell) > 0$, we have
\begin{equation*}
	\begin{split}
		|\dot{\tilde x}_k(t) - \dot{\tilde x}(t)|
		&= \frac{\tilde x_k^d(t_\ell) - \tilde x^d(t_\ell)}{\tilde x^d(t_\ell)} | \dot{\tilde x}^d(t) |
		\\
		&= \frac{d(x_k(t_\ell),\partial\Omega) - d(x(t_\ell),\partial\Omega)}{d(x(t_\ell),\partial\Omega)} | \dot{\tilde x}^d(t)|
		\leq CK_1(3M)^\ell \varepsilon.
	\end{split}
\end{equation*}
We obtain the estimates
\begin{equation} \label{eq:xk - x est case 3}
	|x_k(t) - x(t)| \leq C|\tilde x_k(t) - \tilde x(t)| \leq 2C^2(3M)^\ell \varepsilon
\end{equation}
and, using the formula \eqref{eq:dot xk - dot x} with $j_\ell$ in place of $j_{\ell+1}$,
\begin{equation} \label{eq:dot xk - dot x est case 3}
	|\dot x_k(t) - \dot x(t)|
	\leq C|\dot{\tilde x}_k(t) - \dot{\tilde x}(t)| + CK_1 	|\tilde x_k(t) - \tilde x(t)|
	\leq 3C^2K_1(3M)^\ell \varepsilon 
\end{equation}
for all $t \in [t_\ell,t_{\ell,\lambda}]$.

If $\tilde x^d(t_\ell) > 0$ we also have
\begin{equation*}
	\big| \tilde x_k^d(t_{\ell,\lambda}) - \tilde x^d(t_{\ell,\lambda})\big|
	\leq
	 \frac{| \tilde x_k^d(t_\ell) - \tilde x^d(t_\ell)|}{\tilde x^d(t_\ell)}\tilde x^d(t_{\ell}),
\end{equation*}
and, recalling that $\tilde x^d(t_{\ell}) \leq 2\hat r \leq 1$, this means
\begin{equation*}
	\big|d(x_k(t_{\ell,\lambda}),\partial\Omega) - d(x(t_{\ell,\lambda}),\partial\Omega)\big|
	\leq 
	\frac{|d(x_k(t_{\ell}),\partial\Omega) - d(x(t_{\ell}),\partial\Omega)|}{d(x(t_{\ell}),\partial\Omega}) 
	\leq (3M)^{\ell}\varepsilon < \hat r.
\end{equation*}
If $\tilde x^d(t_\ell) = 0$, then $\tilde x^d_k(t_{\ell,\lambda}) = \tilde x^d(t_{\ell,\lambda})$, i.e.
\begin{equation*}
	d ( x_k(t_{\ell,\lambda}),\partial\Omega) = d (x(t_{\ell,\lambda}),\partial\Omega).
\end{equation*}
Recalling that $d(x(t_{\ell,\lambda}),\partial\Omega) \geq 2\hat r$, it follows that $d(x_k(t_{\ell,\lambda}),\partial\Omega) \geq \hat r$.

Now, for $t \in [t_{\ell,\lambda},t_{\ell+1}]$ we let
\begin{equation*}
	x_k(t) = x_k(t_{\ell,\lambda}) + (t-t_{\ell,\lambda})\dot{x}_k(t_{\ell,\lambda}) + \frac{1}{2\lambda_T}(t-t_{\ell,\lambda})^2 
	(\dot x(t_{\ell+1}) - \dot{x}_k(t_{\ell,\lambda})).
\end{equation*}
Notice that, after taking $\lambda$ small enough,
\begin{equation*}
	|x_k(t) - x_k(t_{\ell,\lambda})| \leq \lambda_T \bigg( \frac{3}{2}\big|\dot{x}_k(t_{\ell,\lambda})\big|
		+ \frac{1}{2}\big| \dot{x}(t_{\ell + 1})\big|\bigg) 
		< \hat r \quad \forall t \in [t_{\ell,\lambda},t_{\ell+1}],
\end{equation*}
so that $x_k(t) \in \Omega$ for all $t \in [t_{\ell,\lambda},t_{\ell+1}]$.
Since $\dot x_k(t)$ is linear on $[t_{\ell,\lambda},t_{\ell+1}]$, $\dot x_k(t_{\ell+1}) = \dot x(t_{\ell+1})$, and $|\ddot x(t)| \leq K_2$, we have
\begin{equation} \label{eq:dot xk - dot x est case 3l}
	|\dot x_k(t) - \dot x(t)| \leq | \dot x_k(t_{\ell,\lambda}) - \dot x(t_{\ell,\lambda})| + K_2\lambda_T
	\leq 3C^2K_1(3M)^\ell \varepsilon + K_2\lambda_T \quad \forall t \in [t_{\ell,\lambda},t_{\ell+1}],
\end{equation}
from which it follows that
\begin{equation} \label{eq:xk - x est case 3l}
	|x_k(t) - x(t)| \leq |x_k(t_{\ell,\lambda}) - x_{\ell,\lambda}(t)|
	+ \big(3C^2K_1(3M)^\ell \varepsilon + K_2\lambda_T\big) \lambda_T
	 \quad \forall t \in [t_{\ell,\lambda},t_{\ell+1}].
\end{equation}
Putting together \eqref{eq:xk - x est case 3}, \eqref{eq:dot xk - dot x est case 3}, \eqref{eq:xk - x est case 3l}, and \eqref{eq:dot xk - dot x est case 3l}, then taking $\lambda$ small enough, we deduce
\begin{equation*}
	|x_k(t) - x(t)|, \ |\dot x_k(t) - \dot x(t)| \leq 4C^2K_0(3M)^\ell \varepsilon \leq (3M)^{\ell+1} \varepsilon \quad \forall t \in [t_\ell,t_{\ell+1}].
\end{equation*}
It follows that
\begin{equation*}
	|d(x_k(t_{\ell+1}),\partial\Omega) - d(x(t_{\ell+1}),\partial\Omega)|
	\leq 
	\frac{4C^2K_0}{2\hat r}(3M)^\ell \varepsilon \  d(x(t_{\ell+1}),\partial\Omega)
	\leq (3M)^{\ell+1}\varepsilon.
\end{equation*}
Putting all these facts together, it follows that $x_k(\cdot) \in \s{C}^{1,1}([0,t_{\ell+1}];\overline{\Omega})$ and that Assumption \ref{as:ind assms} is satisfied with $\ell$ replaced by $\ell+1$.

\nextcase Finally, suppose $x(t_\ell)$ and $x(t_{\ell+1})$ are both in $V$.
We have $x(t_{\ell}) \in B(\xi_{j_\ell},3\hat r(j_\ell))$ and $x(t_{\ell + 1}) \in B(\xi_{j_{\ell+1}},3\hat r(j_{\ell+1}))$.
Again let $\lambda \in (0,1)$ be small but fixed, define 
$t_{\ell,\lambda} = \frac{(\ell + 1 - \lambda) T}{L} = t_{\ell+1} - \lambda_T$ where 
$\lambda_T := \frac{\lambda T}{L}$,
so that, in particular, $x(t) \in B(\xi_{j_{\ell+1}},3\hat r(j_{\ell+1}))$ for $t \in [t_{\ell,\lambda}-\delta,t_{\ell + 1}]$ and for sufficiently small $\delta > 0$ 
For $t \in [t_{\ell},t_{\ell,\lambda}]$, we define $\tilde x(t)$, $\tilde x_k(t)$, and $x_k(t)$ exactly as in the previous case, so that $x_k(t) = \psi_{j_\ell}(\tilde x_k(t))$ and $x(t) = \psi_{j_\ell}(\tilde x(t))$.
In particular, equations \eqref{eq:tilde xk def case 3}, \eqref{eq:dot tilde xk def case 3}, \eqref{eq:xk - x est case 3}, and \eqref{eq:dot xk - dot x est case 3} hold.
Recalling \eqref{eq:xk - x est case 3}, we see that $x_k(t) \in B(\xi_{j_{\ell+1}},4\hat r(j_{\ell+1}))$ for $t \in [t_{\ell,\lambda}-\delta,t_{\ell,\lambda}]$ for $\delta > 0$ small enough, and on this interval we define $\tilde y_k(t) = \psi_{j_{\ell+1}}(x_k(t))$.
On the other hand, we may define $\tilde y(t) = \psi_{j_{\ell+1}}(x(t))$ for all $t \in [t_{\ell,\lambda}-\delta,t_{\ell + 1}]$.
Observe that
\begin{equation*}
	\begin{split}
		\tilde y^d(t) = \tilde x^d(t) = d (x(t),\partial\Omega) \quad \forall t \in [t_{\ell,\lambda}-\delta,t_{\ell + 1}],\\
		\tilde y^d_k(t) = \tilde x^d_k(t) = d (x_k(t),\partial\Omega) \quad \forall t \in [t_{\ell,\lambda}-\delta,t_{\ell,\lambda}].
	\end{split}
\end{equation*}
Thus we may continue the pattern established by equation \eqref{eq:tilde xk def case 3} to define $\tilde y_k^d(t)$ by setting
\begin{equation*}
	\tilde y_k^d(t)
	 = \begin{cases}
		\frac{\tilde x^d_k(t_\ell)}{\tilde x^d(t_\ell)}\tilde x^d(t), &\text{if}~\tilde x^d(t_\ell) \neq 0,\\
		\tilde x^d(t) &\text{if}~\tilde x^d(t_\ell) = 0
	\end{cases}
	\
	= 
	\
	\begin{cases}
		\frac{\tilde x^d_k(t_\ell)}{\tilde x^d(t_\ell)}\tilde y^d(t), &\text{if}~\tilde x^d(t_\ell) \neq 0,\\
		\tilde y^d(t) &\text{if}~\tilde x^d(t_\ell) = 0
	\end{cases}
\quad
	\forall t \in [t_{\ell,\lambda},t_{\ell+1}].
\end{equation*}
For the remaining coordinates, we set
\begin{equation*}
	\tilde y^s_k(t) = \tilde y^s_k(t_{\ell,\lambda}) + (t-t_{\ell,\lambda})\dot{\tilde y}^s_k(t_{\ell,\lambda}) + \frac{1}{2\lambda_T}(t-t_{\ell,\lambda})^2 ( \dot{\tilde y}^s(t_{\ell+1}) - \dot{\tilde y}^s_k(t_{\ell,\lambda})), \qquad s = 1,\ldots,d-1.
\end{equation*}
We will need the following estimates.
First,
\begin{equation*}
		| \tilde y_k(t_{\ell,\lambda}) - \tilde y(t_{\ell,\lambda})| 
	\leq C | x_k(t_{\ell,\lambda}) - x(t_{\ell,\lambda}) |
	\leq 2C^3(3M)^\ell \varepsilon
\end{equation*}
by using \eqref{eq:xk - x est case 3}.
Then, arguing as in the previous cases and taking $\lambda$ small enough, we get
\begin{equation*}
		|\dot{\tilde y}_k(t) - \dot{\tilde y}(t)| \leq |\dot{\tilde y}_k(t_{\ell,\lambda}) - \dot{\tilde y}(t_{\ell,\lambda})|
	+ 2C^2 K_0 \lambda_T
	\leq (3M)^\ell \varepsilon + 2C^2 K_0 \lambda_T
	\leq 2(3M)^\ell \varepsilon
	 \quad \forall t \in [t_{\ell,\lambda},t_{\ell+1}],
\end{equation*}
which yields
\begin{equation*}
	|\tilde y_k(t) - \tilde y(t)| \leq 2C^3(3M)^\ell \varepsilon + 2(3M)^\ell \varepsilon\lambda_T \leq 3C^3(3M)^\ell \varepsilon 
	\quad \forall t \in [t_{\ell,\lambda},t_{\ell+1}],
\end{equation*}
and from which we can derive:
\begin{equation} \label{eq:xk - x est case 4}
	\begin{split}
		|x_k(t) - x(t)| &\leq 3C^4(3M)^\ell \varepsilon,
		\\
		|\dot x_k(t) - \dot x(t)| &\leq 2C(3M)^\ell \varepsilon + K_0C^5(3M)^\ell \varepsilon
	\leq 
	3K_0C^5(3M)^\ell \varepsilon \quad \forall t \in [t_{\ell,\lambda},t_{\ell+1}].
	\end{split}
\end{equation}
Taking into account \eqref{eq:xk - x est case 3}, and \eqref{eq:dot xk - dot x est case 3}, we see that \eqref{eq:xk - x est case 4} holds for all $t \in [t_{\ell},t_{\ell+1}]$.
Finally, note that
\begin{equation*}
	\tilde y_k^d(t_{\ell+1}) -  \tilde y^d(t_{\ell+1})
	= \begin{cases}
		\frac{\tilde x^d_k(t_\ell) - \tilde x^d(t_\ell)}{\tilde x^d(t_\ell)}\tilde y^d(t_{\ell+1}), &\text{if}~\tilde x^d(t_\ell) \neq 0,\\
		0 &\text{if}~\tilde x(t_\ell) = 0
	\end{cases}
	\quad
	\forall t \in [t_{\ell,\lambda},t_{\ell+1}].
\end{equation*}
From this formula and equation \eqref{eq:xktl - xtl}, it follows that
\begin{equation} \label{eq:case 4 final}
	| d ( x_k(t_{\ell+1}),\partial\Omega) - d(x(t_{\ell+1}),\partial\Omega) |  \leq (3M)^\ell \varepsilon d (x(t_{\ell+1}),\partial\Omega).
\end{equation}
From \eqref{eq:xk - x est case 4} and \eqref{eq:case 4 final}, we deduce \eqref{eq:xktl - xtl} with $t_{\ell + 1}$ in place of $t_{\ell}.$ 

\nextstep
Applying Step 3 recursively for each $\ell = 0,1,\ldots, L-1$, we see that $x_k(t)$ is defined for all $t \in [0,T]$, that $x_k(\cdot) \in \s{C}^{1,1}([0,t_{\ell+1}];\overline{\Omega})$, and that equation \eqref{eq:xktl - xtl} holds.
In particular, this means
\begin{equation*}
	|\dot x_k(t) - \dot x(t)| \leq (3M)^{L}\varepsilon.
\end{equation*}
Since $\varepsilon > 0$ is arbitrary, the proof is complete.
\end{proof}

\section{Notion of mild solution and existence}
Recall that $m_0\in \mathscr{P}(\Omega)$ is fixed. Suppose that $\eta$ is a probability measure on $A\Gamma$ ---recall the definitions (\ref{eq:AGamma})--- such that $(\pi^1\circ e_0)_{\#}\eta = m_0,$ or that $\eta\in\wassspaceonGammamKzero,$ where the latter is the subset of those $\eta\in\wassspaceonGammaK$ such that $(\pi^1\circ e_0)_{\#}\eta = m_0.$
Let $x_0\in\bar{\Omega}$, and consider the functional
\begin{align*}
I^{x_0}[u(\cdot);\eta]:= \int_0^T l(t,x^u_{x_0}(t),u(t),(e_t)_{\#}\eta) dt + l_T(x(T),(e_T)_{\#}\eta),
\end{align*}
defined for $u(\cdot)\in A\Gamma_2^{x_0}$,
where
\begin{align*}
A\Gamma_2^{x_0} = \{ u(\cdot) \in A\Gamma_2 \ \big| \ x_0 +\int_0^t u(\tau)d\tau \in \bar{\Omega} \ \forall t\in[0,T]\}
\end{align*}
and 
\begin{equation}\label{eq:defnofxu}
x^u_{x_0}(t) := x_0 +\int_0^t u(\tau)d\tau.
\end{equation} 
However, if $K$ is large enough (see Corollary \ref{cor:ifmonesmallenough} and its proof), we know that if $\eta\in\mathscr{P}(\Gamma^K),$ any optimal control $u(\cdot)$ for $I^{x_0}[\ \cdot \  ;\eta]$ lies in $\Gamma_2^{K,x_0}.$ 
Inspired by \cite{cannarsacapuani1}, we make the following definition.
\begin{defn}\label{defn:cemfgcs}
We say that $\eta\in \wassspaceonGammamKzero$ is a \textit{cemfgcs} (a \guillemotleft constrained equilibrium of mean field game of \underline{controls}\guillemotright) if:
\begin{center}
for $\eta$-a.e.~$(x(\cdot),u(\cdot))\in\Gamma^K$: $\quad I^{x(0)}[u(\cdot);\eta] \leq I^{x(0)}[\tilde{u}(\cdot);\eta]$ for all $\tilde{u}(\cdot) \in \Gamma_2^{K,x(0)}$.
\end{center}
\end{defn}
Similarly to \cite{cannarsacapuani1}, we set 
\begin{align*}
\Gamma^{\eta}[x_0] := \big\{ u(\cdot) \in \Gamma_2^{K,x_0} \ \big| \ I^{x_0}[u(\cdot);\eta] \leq I^{x_0}[\tilde{u}(\cdot);\eta] \ \forall \tilde{u}\in \Gamma_2^{K,x_0} \big\}, \quad \ x_0\in\bar{\Omega}, \ \eta\in\wassspaceonGammamKzero,
\end{align*}
i.e., $\Gamma^{\eta}[x_0]$ is the set of optimal controls in $\Gamma_2^{K,x_0}$ for the functional $I^{x_0}[\cdot;\eta].$ 

Recall that $e_t : A\Gamma \to\bar{\Omega}\times\R^d$ denotes the evaluation mappings, i.e.~$e_t(\gamma(\cdot)) = e_t(x(\cdot),u(\cdot)) = (x(t),u(t)).$ In particular, $\pi^1\circ e_0$ is the $\bar{\Omega}$-valued mapping that assigns to each pair $(x(\cdot),u(\cdot))$ in $A\Gamma$ the initial point of the trajectory, $x(0).$ Given an arbitrary measure $\eta\in\wassspaceonGammamKzero,$ the disintegration theorem (see, e.g., \cite{bogachev}) applied to the map $\pi^1\circ e_0:\Gamma^K\to\bar{\Omega}$ yields an \emph{$m_0$-unique Borel family} of probability measures $\{\eta_{x_0}\}_{x_0\in\bar{\Omega}}$ on $\Gamma^{K}$ such that
\begin{itemize}
\item for $m_0$-a.e.~$x_0\in\bar{\Omega},$ $\eta_{x_0}(\Gamma^{K}\setminus (\pi^1\circ e_0)^{-1}(x_0)) = 0,$ 
\item $\int_{\Gamma^K}\phi(\gamma) \eta(d\gamma) = \int_{\bar{\Omega}}\int_{\Gamma^K} \phi(\gamma) \eta_{x_0}(d\gamma) m_0(dx_0)$ for every Borel function $\phi:\Gamma^K\to[0,\infty].$ 
\end{itemize}
Given $\eta\in\wassspaceonGammamKzero,$ similarly to \cite{cannarsacapuani1}, we define
\begin{align}
E:\wassspaceonGammamKzero & \ \longrightarrow 2^{\wassspaceonGammamKzero} \notag
\\
\eta & \ \longmapsto E(\eta) := \{ \hat{\eta} \in \wassspaceonGammamKzero \ \big| \ \textrm{for } m_0\textrm{-a.e.~} x_0\in\bar{\Omega}: \pi^2\circ\spt(\hat{\eta}_{x_0}) \subset \Gamma^{\eta}[x_0]\} \label{eq:defnofE}
\end{align}
where the measures $\hat{\eta}_{x_0},$ $x_0\in\bar{\Omega}$ are the ones into which $\hat{\eta}$ disintegrates, furnished by the disintegration theorem, and by $\pi^2\circ \spt(\hat{\eta}_{x_0})$ we mean:
\begin{align*}
\pi^2\circ \spt(\hat{\eta}_{x_0}) = \{u(\cdot) \in \Gamma_2^K \ \big| \ \exists x(\cdot) \in \Gamma_1^K \textrm{ s.t. }  (x(\cdot),u(\cdot)) \in \spt(\hat{\eta}_{x_0}) \}.
\end{align*}
\begin{remark}\label{remark:verifyequivalence}
Given $m_0\in\wassspaceonOmega$, $\eta\in \wassspaceonGammamKzero,$ if $\eta\in E(\eta),$ then $\eta$  is a cemfgcs. 
\end{remark}
\begin{proof}
Let $m_0\in\wassspaceonOmega$, $\eta\in \wassspaceonGammamKzero$ and suppose $\eta\in E(\eta).$ Let $\{\eta_{x_0}\}_{x_0\in\bar{\Omega}}$ be the disintegration of $\eta$ with respect to $\pi^1\circ e_0$. Let $\Omega'\subset \bar{\Omega}$ be a subset such that $m_0(\bar{\Omega}\setminus \Omega') = 0$ and for every $x\in \Omega'$: $\pi^2\circ \spt(\eta_{x}) \subset \Gamma^{\eta}[x].$ Let $\Gamma_{\eta} = \bigcup_{x\in\Omega'}\spt(\eta_{x}).$ Then $\eta(\Gamma_{\eta}) = 1,$ as can be verified. 
To show that $\eta$ is a cemfgcs, pick any $(x(\cdot),u(\cdot))\in \Gamma_{\eta}.$ Then $(x(\cdot),u(\cdot))\in\spt(\eta_{x_0})$ for some $x_0 \in \Omega'$ and since, as can be checked, $\spt(\eta_x)\subset (\pi^1\circ e_0)^{-1}(x)$ for every $x\in\Omega'$, we have $x(0) = x_0.$ Moreover, since $x_0\in\Omega',$ $(x(\cdot),u(\cdot))\in\spt(\eta_{x_0})$ means $u(\cdot)\in\Gamma^{\eta}[x_0],$  so $I^{x(0)}[u(\cdot);\eta] \leq I^{x(0)}[\tilde{u}(\cdot);\eta]$ for all $\tilde{u}(\cdot) \in \Gamma_2^{K,x(0)}.$ Thus, the definition of cemfgcs is satisfied by $\eta$ and its corresponding set of full measure $\Gamma_{\eta}\subset \Gamma^K.$ 
\end{proof}
\subsection{Nonemptiness of the fixed-point mapping}
We must check that $E(\eta)\neq\emptyset$ for each $\eta\in \wassspaceonGammamKzero.$  
The sets $\Gamma^{\eta}[x]$ aren't empty but may have more than one element. Let $x_0\mapsto s(x_0)\in\Gamma^{\eta}[x_0]$ be, for the moment, an arbitrary selection 
of the set-valued mapping $\bar{\Omega}\ni x_0\mapsto \Gamma^{\eta}[x_0]\subset \Gamma_2^K$. Let $\eta'\in\wassspaceonGammamKzero$ be the probability measure defined by:
\begin{align*}
\int_{\Gamma} \phi(\gamma) \eta'(d\gamma) = \int_{\Omega}\int_{\Gamma} \phi(\gamma) \delta_{(x^{s(x_{{0}})}_{x_0},s(x_{{0}}))}(d\gamma)m_0(dx_0)
\end{align*}
for every Borel function $\phi:\Gamma\to [0,\infty]$ (recall the notation $x^u_{x_0}$ from (\ref{eq:defnofxu})). In order to apply the disintegration theorem to conclude that for $m_0$-a.e.~$x_0\in\bar{\Omega},$ $\delta_{(x^{s(x_{{0}})}_{x_0},s(x_{{0}}))} = \eta'_{x_0},$ one would like to know that the family $\{ \delta_{(x^{s(x_{{0}})}_{x_0},s(x_{{0}}))} \}_{x_0\in\bar{\Omega}}$ is a Borel family, i.e., the mapping
\begin{align*}
\bar{\Omega} &\ \longrightarrow \wassspaceonGammamKzero
\\
x_0 &\ \longmapsto \delta_{(x^{s(x_{{0}})}_{x_0},s(x_{{0}}))}
\end{align*}
should be Borel, which means, by definition, that for every open subset $W\subset \Gamma^K,$ the function $\bar{\Omega}\ni x_0\mapsto \delta_{(x^{s(x_{{0}})}_{x_0},s(x_{{0}}))}(W)$ should be Borel. This function takes on values in the set $\{0,1\},$ so it suffices to show that the preimage of $1$,
\begin{align*}
\{x_0\in \bar{\Omega} \ \big| \ \delta_{(x^{s(x_{{0}})}_{x_0},s(x_{{0}}))}(W) = 1\} = \{ x_0\in \bar{\Omega} \ \big| \ (x^{s(x_0)}_{x_0},s(x_0)) \in W\},
\end{align*}
is a Borel set whenever $W\subset \Gamma^K$ is a Borel set. Thus, the selection $s$ cannot be arbitrary. We provide next the necessary background for the reader's convenience. The classic reference is \cite{aubinsetvalued}.

\subsubsection*{Set-valued maps} 

\textbf{Definition.} A set-valued map $F:X\leadsto Y$ (i.e.~$F:X\to 2^Y$), where $X$ and $Y$ are metric spaces, is called \textit{upper semicontinuous} (u.s.c.) at $x\in\textrm{Dom}(F):=\{x\in X \ | \ F(x)\neq\emptyset\}$ if for every open neighbourhood $O$ of $F(x)$ there exists an open ball $B\subset X$ containing $x$ such that $x'\in B \Rightarrow F(x') \subset O.$ 

It is easily checked that if $\textrm{Dom}(F)$ is closed, then $F:X\leadsto Y$ is u.s.c.~if and only if for every closed $C\subset Y,$ $F^{-1}(C)$ is closed; this criterion will be used a few lines below.

It is also easy to verify the following: 

\textit{\textbf{Fact}. If $Y$ is compact, and the $\textrm{Graph}(F) := \{(x,y)\in X\times Y \ | \ y\in F(x)\}$ is closed, then $F$ is an u.s.c.~map with closed domain and closed images. }

 \textbf{Definition.} Let $(\Theta,\mathcal{A})$ be a measurable space, $Y$ a metric space. A set-valued map $F:\Theta\leadsto Y$ is called \textit{measurable} if for every open set $O\subset Y,$ $F^{-1}(O)\in \mathcal{A}.$

\textit{\textbf{Fact}. Let $X$ be a metric space and $\mathcal{A}$ a $\sigma$-algebra on $X$ that includes all the open sets. Suppose that the set-valued map $F:X\leadsto Y$ is such that $F^{-1}(C)$ is closed whenever $C\subset Y$ is closed. Then $F$ is measurable.}

\textbf{(}\textit{\textbf{Proof}}. Fix an open set $O\subset Y.$ For every $n\in \N,$ let $C_n$ be the closed set $\{y\in Y \ | \ d(y,Y\setminus O) \geq 1/n\}.$ Then $O = \cup_{n=1}^{\infty} C_n$ and $F(x)\cap O \neq \emptyset$ if and only if $x\in \cup_{n=1}^{\infty} F^{-1}(C_n),$ and the latter set belongs to $\mathcal{A}.$ $\square$ \textbf{)}

\textit{\textbf{Fact} (\cite[Thm 8.1.3]{aubinsetvalued}). Let $Y$ be a complete separable metric space, $(\Theta,\mathcal{A})$ a measurable space, $F$ a measurable set-valued map from $\Theta$ to closed nonempty subsets of $Y$. Then there exists a measurable selection of $F$.} 
 
By the foregoing, and since $\Gamma_2^K$ is a compact set, a measurable selection $x_0\mapsto s(x_0)\in\Gamma^{\eta}[x]\subset \Gamma_2^K$ is possible if $\textrm{Graph}(\Gamma^{\eta})$ is closed. We show a slightly more general statement, which now follows directly from Proposition \ref{prop:approximatingpathsinside}:
\begin{lemma}\label{lemma:Gammacetaisclosed} 
The set-valued mapping 
\begin{align*}
\Gamma^{\cdot}[\cdot]: \mathscr{P}_{m_0}(\Gamma^K)\times \bar{\Omega} &\ \longsquiggly \Gamma_2^K
\\
(\eta,x_0) &\ \longmapsto \Gamma^{\eta}[x_0]
\end{align*}
is closed.
\end{lemma}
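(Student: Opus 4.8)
The plan is to show that the graph of $\Gamma^{\cdot}[\cdot]$ is (sequentially) closed; since $\mathscr{P}_{m_0}(\Gamma^K)$, $\bar{\Omega}$ and $\Gamma_2^K$ are all compact metric spaces, this is exactly what ``closed'' requires, and it is the property that is used in the measurable selection argument above. So I would take sequences $(\eta_n,x_n) \to (\eta,x_0)$ in $\mathscr{P}_{m_0}(\Gamma^K)\times\bar{\Omega}$ and $u_n \in \Gamma^{\eta_n}[x_n]$ with $u_n \to u$ uniformly in $\Gamma_2^K$, and show that $u \in \Gamma^{\eta}[x_0]$. That $u$ lies in $\Gamma_2^{K,x_0}$ is the easy part: each $u_n$ is $K_2$-Lipschitz with $\abs{u_n}\le K_1$, so the uniform limit $u$ is $K_2$-Lipschitz with $\abs{u}\le K_1$; and since $x^{u_n}_{x_n}(t) = x_n + \int_0^t u_n(\tau)\,d\tau \to x_0 + \int_0^t u(\tau)\,d\tau = x^{u}_{x_0}(t)$ uniformly in $t$, with each $x^{u_n}_{x_n}(t) \in \bar{\Omega}$ and $\bar{\Omega}$ closed, the state constraint $x^u_{x_0}(t) \in \bar{\Omega}$ holds for every $t$.

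The crux is the optimality inequality $I^{x_0}[u;\eta] \le I^{x_0}[\tilde u;\eta]$ for an arbitrary $\tilde u \in \Gamma_2^{K,x_0}$, and this rests on two inputs. First, although by hypothesis $u_n$ is only known to minimize $I^{x_n}[\cdot;\eta_n]$ over $\Gamma_2^{K,x_n}$, I claim it in fact minimizes over all of $A\Gamma_2^{x_n}$: the path $\nu^n_t := (e_t)_\#\eta_n$ is Lipschitz with constant $K_1+K_2$ (as computed in Section \ref{section:smalldata}), so Corollary \ref{cor:ifmonesmallenough} applies and every minimizer of $I^{x_n}[\cdot;\eta_n]$ over $A\Gamma_2^{x_n}$ lies in $\Gamma_2^{K,x_n}$; hence the infima over $A\Gamma_2^{x_n}$ and over $\Gamma_2^{K,x_n}$ coincide, and $u_n$ attains the infimum over $A\Gamma_2^{x_n}$, which contains $\Gamma_2^{K^{+1},x_n}$. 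Second, Proposition \ref{prop:approximatingpathsinside}, applied to $x_n \to x_0$ and $\tilde u\in \Gamma_2^{K,x_0}$, produces approximants $\tilde u^n \in \Gamma_2^{K^{+1},x_n}$ with $\tilde u^n \to \tilde u$ uniformly. Combining the two, $I^{x_n}[u_n;\eta_n] \le I^{x_n}[\tilde u^n;\eta_n]$ for every $n$.

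It then remains to pass to the limit, i.e.\ to show $I^{x_n}[u_n;\eta_n] \to I^{x_0}[u;\eta]$ and $I^{x_n}[\tilde u^n;\eta_n] \to I^{x_0}[\tilde u;\eta]$. I would split each difference into the part coming from the change of the measure argument and the part coming from the change of the remaining arguments. For the measure argument, since $e_t \colon \Gamma^K \to M$ and $\pi^1\circ e_T \colon \Gamma^K \to \bar{\Omega}$ are $1$-Lipschitz for the metric $d_\Gamma$, pushing an optimal transport plan between $\eta_n$ and $\eta$ forward gives $\mathbf{d}_M\del{(e_t)_\#\eta_n,(e_t)_\#\eta} \le \mathbf{d}_\Gamma(\eta_n,\eta)$ for all $t\in[0,T]$ and $\mathbf{d}_{\bar{\Omega}}\del{(\pi^1\circ e_T)_\#\eta_n,(\pi^1\circ e_T)_\#\eta} \le \mathbf{d}_\Gamma(\eta_n,\eta)$, so hypotheses (L-\ref{L-ii})(b) and (T-\ref{T-ii}) yield $\abs{I^{x_n}[w;\eta_n] - I^{x_n}[w;\eta]} \le c_1(T+1)\,\mathbf{d}_\Gamma(\eta_n,\eta) \to 0$ uniformly in the control $w$, in particular for $w = u_n$ and $w=\tilde u^n$. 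For the remaining dependence, $x^{u_n}_{x_n}\to x^u_{x_0}$ and $x^{\tilde u^n}_{x_n}\to x^{\tilde u}_{x_0}$ uniformly on $[0,T]$, all four velocity paths take values in the fixed compact ball $\bar{B}_{K_1+1}(0)$, the path $t\mapsto \nu_t := (e_t)_\#\eta$ has compact image in $\mathscr{P}(M)$, and $l$ is uniformly continuous (and $l_T$ continuous) on the corresponding compact sets; hence the integrands converge uniformly in $t$ and $I^{x_n}[u_n;\eta] \to I^{x_0}[u;\eta]$, $I^{x_n}[\tilde u^n;\eta] \to I^{x_0}[\tilde u;\eta]$. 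Letting $n\to\infty$ in $I^{x_n}[u_n;\eta_n] \le I^{x_n}[\tilde u^n;\eta_n]$ gives $I^{x_0}[u;\eta] \le I^{x_0}[\tilde u;\eta]$, and since $\tilde u$ was arbitrary, $u \in \Gamma^{\eta}[x_0]$. The one genuinely hard ingredient is Proposition \ref{prop:approximatingpathsinside} (building $\tilde u^n$ while respecting both the state constraint and the Lipschitz bounds); the other point that must not be overlooked is the upgrade of the optimality of $u_n$ from $\Gamma_2^{K,x_n}$ to $A\Gamma_2^{x_n}$, which is precisely where the small-data Corollary \ref{cor:ifmonesmallenough} enters.
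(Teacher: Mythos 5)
Your proof is correct and follows essentially the same route as the paper: check $u\in\Gamma_2^{K,x_0}$ by uniform convergence of $x^{u_n}_{x_n}$ to $x^{u}_{x_0}$, approximate an arbitrary competitor $\tilde u\in\Gamma_2^{K,x_0}$ by $\tilde u^n\in\Gamma_2^{K^{+1},x_n}$ via Proposition \ref{prop:approximatingpathsinside}, compare $I^{x_n}[u_n;\eta_n]\le I^{x_n}[\tilde u^n;\eta_n]$, and pass to the limit using $\mathbf{d}_{M}\bigl((e_t)_{\#}\eta_n,(e_t)_{\#}\eta\bigr)\le \mathbf{d}_{\Gamma^K}(\eta_n,\eta)$ together with the continuity of $l$ and $l_T$. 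Your explicit upgrade of the optimality of $u_n$ from $\Gamma_2^{K,x_n}$ to all of $A\Gamma_2^{x_n}\supset\Gamma_2^{K^{+1},x_n}$ via Corollary \ref{cor:ifmonesmallenough} is precisely what the paper uses implicitly (it is recorded just before Definition \ref{defn:cemfgcs}), so this is a clarification rather than a different argument.
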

\begin{proof}
 Let $\{x_n\}_{n=1}^{\infty}\subset \bar{\Omega}$ be a sequence that converges to $x_0\in \bar{\Omega},$ $\{\eta^n\}_{n=1}^{\infty} \subset \mathscr{P}_{m_0}(\Gamma^K)$ a sequence converging to $\eta\in\mathscr{P}_{m_0}(\Gamma^K),$ and let $\{u_n(\cdot)\}_{n=1}^{\infty}\subset \Gamma_2^K$ be a sequence with $u_n(\cdot) \in \Gamma^{\eta^n}[x_n]$ for every $n,$ converging in $d_2$ (i.e.~uniformly) to $u(\cdot)\in\Gamma_2^{K}.$ The sequence of paths $\{x^{u_n}_{x_n}(\cdot)\}_{n=1}^{\infty}$ in the compact metric space $\Gamma_1^K$ converges to the path $x_{x_0}^{u}(\cdot),$ so $u(\cdot) \in \Gamma_2^{K,x_0}.$  
Let $\tilde{u}(\cdot)\in \Gamma_2^{K,x_0}$ be arbitrary. 
By Proposition \ref{prop:approximatingpathsinside}, we may let $\{\tilde{u}^n(\cdot)\}_{n=1}^{\infty}\subset \Gamma_2^{K^{+1}}$ be a sequence with $\tilde{u}^n(\cdot) \in \Gamma_2^{K^{+1},x_n}$ for
every $n,$ that converges to $\tilde{u}(\cdot)$ uniformly in time. This uniform convergence and the continuity of $l$ and $l_T$, along with the fact that
\begin{align*}
\mathbf{d}_{\bar{\Omega}\times B_{K_1}(0)}((e_t)_{\#}\eta,(e_t)_{\#}\eta^n)\leq \mathbf{d}_{\Gamma^K}(\eta,\eta^n)
\end{align*}
for all $n$ and $t\in[0,T],$ immediately give that both
\begin{equation}\label{eq:nowthisisimmediate}
I^{x_0}[\tilde{u};\eta] = \lim_{n\to\infty}I^{x_n}[\tilde{u}^n;\eta^n].
\end{equation}
and
\begin{equation}\label{eq:thistoo}
I^{x_0}[u;\eta] = \lim_{n\to\infty} I^{x_n}[u_n;\eta^n].
\end{equation}
Since $\tilde{u}^n(\cdot)\in\Gamma^{x_n,K^{+1}}_2\subset A\Gamma_2^{x_n}$ and $u_n(\cdot)\in \Gamma^{\eta^n}[x_n],$ we have
\begin{align*}
I^{x_n}[u_n;\eta^n]\leq I^{x_n}[\tilde{u}^n;\eta^n], \quad n\in\N.
\end{align*}
Passing to the limit as $n\to\infty$, by (\ref{eq:nowthisisimmediate}) and (\ref{eq:thistoo}), we obtain
\begin{align*}
I^{x_0}[u(\cdot);\eta] \leq I^{x_0}[\tilde{u};\eta].
\end{align*}
Since $\tilde{u}(\cdot)$ was an arbitrary element of $\Gamma_2^{K,x_0}$, this has shown that $u(\cdot)\in \Gamma^{\eta}[x_0],$ and, thus, that the mapping $\Gamma^{\cdot}[\cdot]$ is closed.
\end{proof}
\subsection{Existence of equilibrium}
We cite: 

\textbf{Kakutani-Fan-Glicksberg theorem.} \cite[17.55]{aliprantisborder} \emph{Let $K$ be a nonempty compact convex subset of a locally convex Hausdorff space, and let the multi-valued mapping $\phi: K\to K$ have closed graph and nonempty convex values. Then the set of fixed points of $\phi$ is compact and nonempty.}

As mentioned in the section, $\mathscr{P}(\Gamma^K)$ is a compact metric space with the 1-Wasserstein distance, since $\Gamma^K$ is compact (w.r.t.~uniform convergence). Because $\Gamma^K$ is compact, it is known that narrow convergence of sequences in $\mathscr{P}(\Gamma^K)$ coincides with convergence in the Wasserstein distance \cite[Proposition 7.1.5]{gradientflows}. Every element $\mu \in \mathscr{P}(\Gamma^K)$ can be identified with the linear functional $f\mapsto \int_{\Gamma^K} f(\gamma) \mu(d\gamma)$ on $C(\Gamma^K).$ Narrow convergence is induced by the weak-$\star$ topology of $C(\Gamma^K)'$ ---the continuous dual of the space of continuous functions on $\Gamma^K$, and $C(\Gamma^K)'$, with its weak-$\star$ topology, is a locally convex, Hausdorff, topological vector space. Thus, $\mathscr{P}(\Gamma^K)$ is a compact, convex subset of $C(\Gamma^K)'$, and it can be easily shown that so is $\mathscr{P}_{m_0}(\Gamma^K).$ Having seen that the multivalued mapping $E:\mathscr{P}_{m_0}(\Gamma^K) \longsquiggly  \mathscr{P}_{m_0}(\Gamma^K)$ takes on nonempty values, now we will check that they are convex and that the mapping has a closed graph. 
\begin{lemma}\label{lemma:closedandconvexvalues}
The set-valued mapping $E:\mathscr{P}_{m_0}(\Gamma^K) \longsquiggly  \mathscr{P}_{m_0}(\Gamma^K)$, defined above in (\ref{eq:defnofE}), takes on non-empty, convex values and has a closed graph.
\end{lemma}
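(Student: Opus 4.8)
The plan is to recast membership in $E(\eta)$ as a single concentration condition, after which all three assertions become routine. For $\nu\in\wassspaceonGammamKzero$ put
\[
	\mathcal{G}(\nu):=\{(x(\cdot),u(\cdot))\in\Gamma^K:u(\cdot)\in\Gamma^{\nu}[x(0)]\}\subset\Gamma^K,
\]
and
\[
	\mathcal{G}:=\{(\nu,(x(\cdot),u(\cdot)))\in\wassspaceonGammamKzero\times\Gamma^K:u(\cdot)\in\Gamma^{\nu}[x(0)]\}.
\]
The map $(\nu,(x(\cdot),u(\cdot)))\mapsto(\nu,x(0),u(\cdot))$ from $\wassspaceonGammamKzero\times\Gamma^K$ to $\wassspaceonGammamKzero\times\bar{\Omega}\times\Gamma_2^K$ is continuous, and $\mathcal{G}$ is its preimage of the graph of $(\eta,x_0)\mapsto\Gamma^{\eta}[x_0]$, which is closed by Lemma~\ref{lemma:Gammacetaisclosed}; hence $\mathcal{G}$ is closed, and every slice $\mathcal{G}(\nu)$ is a closed (thus compact) subset of $\Gamma^K$.

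The first step is to prove that for $\eta\in\wassspaceonGammamKzero$ one has $\hat\eta\in E(\eta)$ if and only if $\hat\eta(\mathcal{G}(\eta))=1$, equivalently $\spt(\hat\eta)\subset\mathcal{G}(\eta)$ (these two are the same because $\mathcal{G}(\eta)$ is closed and a Borel probability measure on the separable space $\Gamma^K$ charges a closed set fully exactly when its support lies in it). Let $\{\hat\eta_{x_0}\}$ be the disintegration of $\hat\eta$ over $\pi^1\circ e_0$. Since $\pi^1\circ e_0$ is continuous, $(\pi^1\circ e_0)^{-1}(x_0)$ is closed, so the identity $\hat\eta_{x_0}\big(\Gamma^K\setminus(\pi^1\circ e_0)^{-1}(x_0)\big)=0$ yields $\spt(\hat\eta_{x_0})\subset(\pi^1\circ e_0)^{-1}(x_0)$ for $m_0$-a.e.\ $x_0$. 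If $\hat\eta\in E(\eta)$, then for $m_0$-a.e.\ $x_0$ every $(x(\cdot),u(\cdot))\in\spt(\hat\eta_{x_0})$ satisfies $x(0)=x_0$ and $u(\cdot)\in\pi^2\circ\spt(\hat\eta_{x_0})\subset\Gamma^{\eta}[x_0]=\Gamma^{\eta}[x(0)]$, i.e.\ $\spt(\hat\eta_{x_0})\subset\mathcal{G}(\eta)$; integrating the indicator $\mathbf{1}_{\mathcal{G}(\eta)}$ against the disintegration gives $\hat\eta(\mathcal{G}(\eta))=1$. Conversely, if $\hat\eta(\mathcal{G}(\eta))=1$ then $\hat\eta_{x_0}(\mathcal{G}(\eta))=1$ for $m_0$-a.e.\ $x_0$, and combining this with $\spt(\hat\eta_{x_0})\subset(\pi^1\circ e_0)^{-1}(x_0)$ and the closedness of $\mathcal{G}(\eta)$ we get $\spt(\hat\eta_{x_0})\subset\mathcal{G}(\eta)\cap(\pi^1\circ e_0)^{-1}(x_0)$ for such $x_0$, hence $\pi^2\circ\spt(\hat\eta_{x_0})\subset\Gamma^{\eta}[x_0]$, so $\hat\eta\in E(\eta)$.

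Given the reformulation, non-emptiness and convexity are immediate. For non-emptiness, the measurable selection $x_0\mapsto s(x_0)\in\Gamma^{\eta}[x_0]$ and the associated measure $\eta'$ (the law under $m_0$ of $x_0\mapsto(x^{s(x_0)}_{x_0},s(x_0))$) constructed in Section~\ref{subsection:nonemptiness} satisfy $(x^{s(x_0)}_{x_0},s(x_0))\in\mathcal{G}(\eta)$ for every $x_0$ because $s(x_0)\in\Gamma^{\eta}[x_0]=\Gamma^{\eta}[x^{s(x_0)}_{x_0}(0)]$; hence $\eta'(\mathcal{G}(\eta))=1$, so $\eta'\in E(\eta)$. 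For convexity, if $\hat\eta^1,\hat\eta^2\in E(\eta)$ and $\lambda\in[0,1]$, then $\lambda\hat\eta^1+(1-\lambda)\hat\eta^2$ still lies in $\wassspaceonGammamKzero$ (its $\pi^1\circ e_0$-marginal is $\lambda m_0+(1-\lambda)m_0=m_0$) and still gives $\mathcal{G}(\eta)$ full mass, so it belongs to $E(\eta)$.

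The main point, and the expected obstacle, is the closed graph. Suppose $\eta^n\to\eta$ and $\hat\eta^n\to\hat\eta$ in $\wassspaceonGammamKzero$ with $\hat\eta^n\in E(\eta^n)$; we must show $\hat\eta\in E(\eta)$. Consider $\mu^n:=\delta_{\eta^n}\otimes\hat\eta^n\in\mathscr{P}(\wassspaceonGammamKzero\times\Gamma^K)$. By the reformulation $\hat\eta^n(\mathcal{G}(\eta^n))=1$, so $\mu^n$ is concentrated on $\{\eta^n\}\times\mathcal{G}(\eta^n)\subset\mathcal{G}$. Since $\wassspaceonGammamKzero$ and $\Gamma^K$ are compact metric spaces, $\eta^n\to\eta$ and $\hat\eta^n\to\hat\eta$ narrowly imply $\mu^n\to\mu:=\delta_{\eta}\otimes\hat\eta$ narrowly; as $\mathcal{G}$ is closed, the portmanteau theorem gives $\mu(\mathcal{G})\ge\limsup_n\mu^n(\mathcal{G})=1$, whence $\mu(\mathcal{G})=1$. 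By Fubini $\mu(\mathcal{G})=\hat\eta(\mathcal{G}(\eta))$, so $\hat\eta(\mathcal{G}(\eta))=1$ and therefore $\hat\eta\in E(\eta)$ by the reformulation. The delicate feature is that the target sets $\mathcal{G}(\eta^n)$ move with $n$: it is precisely the \emph{joint} closedness of $(\eta,x_0)\mapsto\Gamma^{\eta}[x_0]$ provided by Lemma~\ref{lemma:Gammacetaisclosed}, rather than closedness for a fixed measure, that makes $\mathcal{G}$ closed and lets the portmanteau argument close.
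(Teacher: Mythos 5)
Your proof is correct, but it takes a genuinely different route from the paper in the two nontrivial parts (convexity and closed graph). The paper argues pointwise through disintegrations: for convexity it identifies the disintegration of $\lambda\hat\eta^0+(1-\lambda)\hat\eta^1$ and uses that the support of the convex combination is the union of the supports; for the closed graph it restricts the measures $\hat\eta^j,\hat\eta$ to a common full-measure set $\Gamma^0$, invokes the support-approximation property of narrow convergence (\cite[Prop.~5.1.8]{gradientflows}) to lift a point of $\spt(\hat\eta_{x_0})$ to points of $\spt(\hat\eta^j_{x_j})$, and then applies Lemma \ref{lemma:Gammacetaisclosed}. You instead recast $\hat\eta\in E(\eta)$ as the single concentration condition $\hat\eta(\mathcal{G}(\eta))=1$ on the closed set $\mathcal{G}(\eta)$, where closedness of $\mathcal{G}$ is exactly the graph-closedness of $(\eta,x_0)\mapsto\Gamma^{\eta}[x_0]$ from Lemma \ref{lemma:Gammacetaisclosed} pulled back by the continuous map $(\nu,(x,u))\mapsto(\nu,x(0),u)$; your two-way passage between the disintegration formulation and the concentration formulation (using $\spt(\hat\eta_{x_0})\subset(\pi^1\circ e_0)^{-1}(x_0)$ and that a closed set of full measure contains the support) is sound. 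After that, convexity is immediate, and the closed graph follows from narrow convergence of $\delta_{\eta^n}\otimes\hat\eta^n$ to $\delta_{\eta}\otimes\hat\eta$ (valid here by uniform continuity of test functions on the compact product) plus the portmanteau upper bound for the closed set $\mathcal{G}$. What your approach buys is that it avoids the paper's restriction step (the asserted narrow convergence $\hat\eta^j|_{\Gamma^0}\to\hat\eta|_{\Gamma^0}$) and the support-approximation lemma entirely, replacing them by a one-line portmanteau argument; what the paper's approach buys is that it never needs the joint set $\mathcal{G}$ or measurability/closedness discussions at the level of the product space, working instead directly with the objects appearing in the definition of $E$. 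Both proofs hinge on the same key input, Lemma \ref{lemma:Gammacetaisclosed}, and for nonemptiness you correctly reuse the measurable-selection construction of Section \ref{subsection:nonemptiness}, as does the paper.
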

\begin{proof}
The nonemptiness has been shown. Let $\eta \in \mathscr{P}_{m_0}(\Gamma^K)$, $0 <\lambda < 1$ and $\hat\eta^0$ and $\hat\eta^1$ be elements of $E(\eta)\subset\mathscr{P}_{m_0}(\Gamma^K).$ It is straightforward to verify that the disintegration of $\lambda \hat\eta^0 + (1-\lambda)\hat\eta^1$ coincides $m_0$-a.e.~with $\{\lambda\hat\eta^0_{x_0}+(1-\lambda)\hat\eta^1_{x_0}\}_{x_0\in\bar{\Omega}}$, and $\spt(\lambda\hat\eta^0_{x_0}+(1-\lambda)\hat\eta^1_{x_0}) = \spt(\hat\eta^0_{x_0}) \cup \spt(\hat\eta^1_{x_0})$ for every $x_0\in\bar{\Omega}$. Therefore, for $m_0$-a.e.~$x_0\in\bar{\Omega},$ if $u(\cdot)\in\Gamma_2^K$ and $x(\cdot)\in\Gamma_1^K$ are such that $(x(\cdot),u(\cdot))\in\spt(\lambda\hat\eta^0_{x_0}+(1-\lambda)\hat\eta^1_{x_0}$), then $u(\cdot)\in\Gamma^{\eta}[x_0]$. This shows that $E(\eta)$ is convex. 

Let $\{\eta^j\}_{j=1}^{\infty}\subset \mathscr{P}_{m_0}(\Gamma^K)$ be a sequence converging to $\eta\in \mathscr{P}_{m_0}(\Gamma^K)$, take a sequence $\{\hat\eta^j\}_{j=1}^{\infty} \subset \mathscr{P}_{m_0}(\Gamma^K)$ with $\hat\eta^j\in E(\eta^j),$ and suppose $\lim_{j\to\infty}\mathbf{d}_{\Gamma^K}(\hat\eta,\hat\eta^j) = 0$ for some $\hat\eta.$ 
Let $\{\hat\eta_{x}\}_{x\in\Omega}$ be the disintegration of $\hat\eta,$ $\{\hat\eta^j_{x}\}_{x\in\Omega}$ the disintegration of $\hat{\eta}^j,$ $j=1,2,\ldots$ and take $\Omega^0\subset\bar\Omega$ to be a subset such that $m_0(\bar\Omega\setminus\Omega^0) = 0$ and $\spt(\hat\eta_{x})\subset (\pi^1\circ e_0)^{-1}(x)$ for all $x\in\Omega^0.$ 
For $j=1,2,\ldots$ let $\Omega^j$ be a set such that $m_0(\bar\Omega\setminus\Omega^j)=0$ and for every
$x\in \Omega^j,$ both $\pi^2\circ \spt(\hat{\eta}^j_x)\subset \Gamma^{\eta^j}[x]$ and $\hat\eta^j_{x}(\Gamma^K\setminus(\pi^1\circ e_0)^{-1}(x)) = 0.$ 
The subsets $\Omega^0, \Omega^1, \ldots$ with these properties exist by hypothesis. Let 
\begin{align*}
\Omega' := \bigcap\limits_{j=0}^{\infty} \Omega^j, \qquad \Gamma^0 := (\pi^1\circ e_0)^{-1}(\Omega') \subset \Gamma^K.
\end{align*}
We have
\begin{align*}
m_0(\Omega') = 1, \qquad \hat\eta(\Gamma^0) = 1, \qquad \hat\eta^j(\Gamma^0) =1, \quad j=1,2,\ldots
\end{align*}
The set $\Gamma^0$ is a Borel subset of $\Gamma^K,$ so we can consider all of $(\Gamma^0,\mathcal{B}(\Gamma^0),\hat\eta^j|_{\Gamma^0})$ ($j$ ranging from $1$ to $\infty$) and $(\Gamma^0,\mathcal{B}(\Gamma^0),\hat\eta|_{\Gamma^0})$ simultaneously as probability spaces on the same set $\Gamma^0$ with the Borel $\sigma$-algebra $\mathcal{B}(\Gamma^0)$. The set $\Gamma^0$ is a separable metric space in its own right (with the metric $d_{\Gamma^K}|_{\Gamma^0}$). Furthermore, we have
\begin{equation}\label{eq:narrowly}
\hat\eta^j|_{\Gamma^0} \longrightarrow \hat\eta|_{\Gamma^0} \qquad \textrm{narrowly}.
\end{equation}
Fix an arbitrary point $x_0 \in \Omega'$ and let $(\hat{x},\hat{u})\in \spt(\hat{\eta}_{x_0}).$ 
Note that $\hat{x}(0)= x_0,$ and that $(\hat{x}(\cdot),\hat{u}(\cdot))\in \spt(\hat{\eta})$, hence $(\hat{x}(\cdot),\hat{u}(\cdot))\in \spt(\hat{\eta}|_{\Gamma^0})$. 
The convergence (\ref{eq:narrowly}) implies (see \cite[Prop.~5.1.8]{gradientflows}) that there exists a sequence $\{(\hat{x}^j,\hat{u}^j)\}_{j=1}^{\infty}\subset \Gamma^0,$ with $(\hat{x}^j,\hat{u}^j)\in\spt(\hat{\eta}^j|_{\Gamma^0})$ for all $j,$ 
such that $(\hat{x}^j,\hat{u}^j)\overset{d_{\Gamma}}{\to} (\hat{x},\hat{u}).$ 
Let $x_j = \hat{x}^j(0),$ $j=1,2,\ldots$ Clearly, $(\hat{x}^j,\hat{u}^j)\in\spt(\hat\eta^j)$ for each $j,$ so $(\hat{x}^j,\hat{u}^j)\in \spt(\hat\eta^j_{x_j}),$ 
because, by $x_j$ being in $\Omega',$ there holds that $\hat{\eta}^j_{x_j}(\Gamma^K\setminus(\pi^1\circ e_0)^{-1}(x_j)) = 0.$ 
Moreover, since all the $x_j\in \Omega',$ we have $\pi^2\circ \spt(\hat{\eta}^j_{x_j})\subset \Gamma^{\eta^j}[x_j].$ In summary, we have sequences $\eta^j\to\eta,$ $x_j\to x_0$ and $\{\hat{u}^j\}_{j=1}^{\infty}\subset \Gamma_2^K$ such that $\hat{u}^j\in \Gamma^{\eta^j}[x_j]$ for every $j$ and the controls $\hat{u}^j$ converge to $\hat{u}$ in $d_2$. 
We are exactly in the situation of Lemma \ref{lemma:Gammacetaisclosed}, so we conclude that $\hat{u}\in\Gamma^{\eta}[x_0].$ Thus, we have shown that for an arbitrary point $x_0\in \Omega',$ 
\begin{align*}
\pi^2\circ \spt(\hat{\eta}_{x_0}) \subset \Gamma^{\eta}[x_0],
\end{align*}
which means that $\hat{\eta}\in E(\eta),$ because $\Omega'$ has full measure. So we have proved that the set-valued mapping $E$ has a closed graph.
\end{proof}
Therefore, by the Kakutani-Fan-Glicksberg theorem, the set-valued mapping $E:\mathscr{P}_{m_0}(\Gamma^K)$ $\longsquiggly  \mathscr{P}_{m_0}(\Gamma^K)$ has a fixed point. By Remark \ref{remark:verifyequivalence}, this means that there exists a constrained equilibrium of mean field game of controls, i.e.~we have proved:
\begin{lemma}\label{lemma:existenceofcemfgcs}
With the notation of section \ref{subsection:notationandhypotheses} let the hypotheses (L-\ref{L-i}) through (L-\ref{L-iii}) and (T-\ref{T-i}), (T-\ref{T-ii}) hold, and suppose that the constant $c_1$ is sufficiently small and $K=(K_1,K_2)$ is as in Corollary \ref{cor:ifmonesmallenough}. Then there exists a \emph{cemfgcs} (constrained equilibrium of mean field game of controls) (see Definition \ref{defn:cemfgcs}) $\eta$ in $\mathscr{P}_{m_0}(\Gamma^K).$ 
\end{lemma}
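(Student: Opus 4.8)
The plan is to obtain $\eta$ as a fixed point of the set-valued map $E$ from \eqref{eq:defnofE} via the Kakutani--Fan--Glicksberg theorem, and then to invoke Remark \ref{remark:verifyequivalence} to conclude that any such fixed point is a cemfgcs. Thus the task reduces to verifying the hypotheses of that theorem for $E$ on $\mathscr{P}_{m_0}(\Gamma^K)$.

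First I would fix the ambient topological vector space: since $\Gamma^K$ is compact for the metric $d_\Gamma$, the space $\mathscr{P}(\Gamma^K)$ embeds into $C(\Gamma^K)'$ with its weak-$\star$ topology via $\mu \mapsto \big(f \mapsto \int_{\Gamma^K} f\, d\mu\big)$; this dual is locally convex and Hausdorff, and narrow convergence of sequences coincides with weak-$\star$ convergence and with convergence in $\mathbf{d}_{\Gamma^K}$. One then checks that $\mathscr{P}_{m_0}(\Gamma^K)$ is a nonempty convex subset of the compact set $\mathscr{P}(\Gamma^K)$ and that it is closed, hence compact: the constraint $(\pi^1\circ e_0)_{\#}\eta = m_0$ is linear in $\eta$ and is preserved under narrow limits, because $\gamma\mapsto \phi(\gamma(0))$ is continuous on $\Gamma^K$ for every $\phi\in C(\bar\Omega)$.

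Next I would assemble the properties of $E$ already established. Corollary \ref{cor:ifmonesmallenough} guarantees that, once $c_1$ is small and $K$ is chosen accordingly, every optimal control for $I^{x_0}[\,\cdot\,;\eta]$ with $\eta\in\mathscr{P}(\Gamma^K)$ lies in $\Gamma_2^{K,x_0}$, so the sets $\Gamma^\eta[x_0]$ are nonempty; the measurable-selection argument of Section \ref{subsection:nonemptiness}, which rests on Lemma \ref{lemma:Gammacetaisclosed} (closedness of $(\eta,x_0)\mapsto\Gamma^\eta[x_0]$) and hence on Proposition \ref{prop:approximatingpathsinside}, then yields $E(\eta)\neq\emptyset$; and Lemma \ref{lemma:closedandconvexvalues} provides convexity of the values of $E$ together with closedness of its graph. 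With all hypotheses in place, Kakutani--Fan--Glicksberg furnishes a fixed point $\eta\in E(\eta)\subset \mathscr{P}_{m_0}(\Gamma^K)$.

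Finally, Remark \ref{remark:verifyequivalence} applies directly: a fixed point of $E$ satisfies $\pi^2\circ\spt(\eta_{x_0})\subset\Gamma^\eta[x_0]$ for $m_0$-a.e.\ $x_0$, so disintegrating $\eta$ along $\pi^1\circ e_0$ shows that $\eta$-a.e.\ pair $(x(\cdot),u(\cdot))$ has $u(\cdot)$ optimal for $I^{x(0)}[\,\cdot\,;\eta]$, which is exactly Definition \ref{defn:cemfgcs}. The only real obstacle at this final stage is bookkeeping: confirming that the smallness of $c_1$ and the choice of $K$ from Corollary \ref{cor:ifmonesmallenough} remain compatible with everything downstream, and in particular that $E$ genuinely sends $\mathscr{P}_{m_0}(\Gamma^K)$ into its own power set rather than into the power set of a strictly larger class such as $\mathscr{P}(\Gamma^{K^{+1}})$ --- all of the genuine analytic difficulty having been absorbed into the earlier sections, above all into Proposition \ref{prop:approximatingpathsinside}.
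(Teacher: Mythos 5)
Your proposal is correct and follows essentially the same route as the paper: realize $\mathscr{P}_{m_0}(\Gamma^K)$ as a nonempty compact convex subset of $C(\Gamma^K)'$ with the weak-$\star$ topology, invoke the nonemptiness (via measurable selection and Lemma \ref{lemma:Gammacetaisclosed}), convexity of values, and closed graph of $E$ from Lemma \ref{lemma:closedandconvexvalues}, apply Kakutani--Fan--Glicksberg to obtain $\eta\in E(\eta)$, and conclude via Remark \ref{remark:verifyequivalence}. No substantive differences from the paper's argument.
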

In the same way as \cite{cannarsacapuani1}, we define the notion of mild solution for this problem. Recall that $X$ is an open set including $\bar{\Omega}$ and $M=\bar{\Omega}\times\R^d.$
\begin{defn}\label{defn:mildsolution}
Let $l:[0,T]\times X\times \R^d\times\mathscr{P}(M)\to\R,$ $l_T:X\times\mathscr{P}(M)\to \R$ be continuous functions, $m_0\in\mathscr{P}(\bar{\Omega}).$  A pair $(V,\sigma)\in C([0,T]\times\bar{\Omega})\times C([0,T];\mathscr{P}(\bar{\Omega}))$ is called a \emph{mild solution of the constrained MFGCs problem with initial state distribution $m_0$} if there exists $K=(K_1,K_2)$ and a cemfgcs $\eta\in \mathscr{P}_{m_0}(\Gamma^K)$ such that
\begin{enumerate}[(i)]
\item $\sigma_0 = m_0$,
\item $\sigma_t = (\pi^1\circ e_t)_{\#}\eta$ for all $t\in [0,T],$ 
\item \label{item:thmiii} $V(t,x) = \inf\limits_{\substack{u(\cdot) \in A\Gamma_2^x}}  \int_t^T l(s,x^u_x(s),u(s),(e_s)_{\#}\eta) ds + l_T(x(T),( e_T)_{\#}\eta)$ for every $x\in\bar{\Omega}$ and $t\in [0,T].$ 
\end{enumerate}
\end{defn}
Thus, the main theorem follows:
\begin{thm}\label{thm:mildsolution}
With the notation of section \ref{subsection:notationandhypotheses} let the hypotheses (L-\ref{L-i}) through (L-\ref{L-iii}) and (T-\ref{T-i}), (T-\ref{T-ii}) hold, and suppose that the constant $c_1$ is sufficiently small (see Corollary \ref{cor:ifmonesmallenough}). Let $m_0\in\mathscr{P}(\bar{\Omega}).$ Then there exists a mild solution $(V,\sigma)\in C([0,T]\times\bar{\Omega})\times C([0,T];\mathscr{P}(\bar{\Omega}))$ of the constrained MFGCs problem with initial state distribution $m_0$.
\end{thm}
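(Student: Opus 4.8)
The plan is to assemble the pieces already established. Since the constant $c_1$ is assumed sufficiently small, Corollary \ref{cor:ifmonesmallenough} furnishes a pair $K=(K_1,K_2)$ with the property that, for every $\eta\in\mathscr{P}(\Gamma^K)$ and every subinterval $[t,T]$, setting $\nu_s:=(e_s)_{\#}\eta$, any minimizer of the associated cost functional over absolutely continuous trajectories with a prescribed initial point lies, together with its derivative, in $\Gamma^K$ (the constants controlling the bounds of Lemma \ref{lemma:Ksub1} only decrease as the horizon shortens). Fixing this $K$, Lemma \ref{lemma:existenceofcemfgcs} then provides a cemfgcs $\eta\in\mathscr{P}_{m_0}(\Gamma^K)$. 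I would set $\sigma_t:=(\pi^1\circ e_t)_{\#}\eta$ for $t\in[0,T]$ and let $V(t,x)$ be the right-hand side of Definition \ref{defn:mildsolution}(\ref{item:thmiii}) with this same $\eta$. Then clause (i) holds because $(\pi^1\circ e_0)_{\#}\eta=m_0$ for $\eta\in\mathscr{P}_{m_0}(\Gamma^K)$; clause (ii) is the definition of $\sigma$; and clause (iii) is the definition of $V$. Thus the theorem reduces to the two regularity statements implicit in the ambient spaces of Definition \ref{defn:mildsolution}, namely $\sigma\in C([0,T];\mathscr{P}(\bar{\Omega}))$ and $V\in C([0,T]\times\bar{\Omega})$.

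The continuity of $\sigma$ is immediate: every $x(\cdot)$ appearing in a pair $\gamma\in\Gamma^K$ is $K_1$-Lipschitz, so, using $\gamma\mapsto(e_t(\gamma),e_s(\gamma))$ as a transport plan, $\mathbf{d}_{\bar{\Omega}}(\sigma_t,\sigma_s)\leq K_1|t-s|$; in fact $\sigma$ is Lipschitz. By the same token $\nu_{\cdot}=(e_{\cdot})_{\#}\eta$ is a $(K_1+K_2)$-Lipschitz path in $\mathscr{P}(M)$, so Theorem \ref{thm:cardcannarsa} is available for the control problem in (iii) on any interval $[t,T]$. For $V$: since $l(t,x,\cdot,\nu)$ is convex with at least quadratic growth in $v$ (the lower bound on $D^2_{vv}l$ in (L-\ref{L-ii})) and $l_T$ is bounded by (T-\ref{T-i}), the infimum in (iii) is finite and attained for every $(t,x)$, and by the analogue of Corollary \ref{cor:ifmonesmallenough} on $[t,T]$ every minimizer has trajectory--velocity pair in $\Gamma^K$; hence $V(t,x)$ equals the minimum of the (continuous) cost over the compact set of admissible controls on $[t,T]$ issuing from $x$. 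Lower semicontinuity of $x\mapsto V(t,x)$ then follows by extracting a uniformly convergent subsequence of such minimizers and passing to the limit in the cost, the path $\nu_{\cdot}$ being frozen and $l,l_T$ continuous --- exactly the argument in the proof of Lemma \ref{lemma:Gammacetaisclosed}. Upper semicontinuity follows by applying Proposition \ref{prop:approximatingpathsinside} (on $[t,T]$) to an optimal control for $(t,x)$: it produces admissible controls from neighbouring initial points whose velocities converge uniformly, hence whose costs converge, which bounds $\limsup$ of $V$ at nearby states by $V(t,x)$. Joint continuity is then obtained by combining this $x$-continuity with the elementary bound $|V(t,x)-V(s,x)|\leq C|t-s|$, which comes from the uniform bound on $|l|$ along $\Gamma^K$-trajectories (via (L-\ref{L-i}) and Lemma \ref{lemma:DvlandDxl}) together with another application of Proposition \ref{prop:approximatingpathsinside} to re-anchor an optimal trajectory when the horizon is shortened.

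The only step that is more than bookkeeping is the upper-semicontinuity of $V$ in the state variable (and the companion re-anchoring in time): the constraint $x(t)\in\bar{\Omega}$ forbids naively translating an optimal trajectory to a nearby starting point, and it is precisely Proposition \ref{prop:approximatingpathsinside} --- the self-contained geometric construction of Section \ref{section:approximationofpaths} --- that supplies admissible approximants with uniformly converging velocities, which is what this argument requires. Once that is granted, $(V,\sigma)$ satisfies every clause of Definition \ref{defn:mildsolution} with the cemfgcs $\eta$ as witness, which is the assertion of Theorem \ref{thm:mildsolution}.
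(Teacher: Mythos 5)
Your proof has the same skeleton as the paper's: invoke Corollary \ref{cor:ifmonesmallenough} and Lemma \ref{lemma:existenceofcemfgcs} to get $K$ and a cemfgcs $\eta$, set $\sigma_t=(\pi^1\circ e_t)_{\#}\eta$, define $V$ by clause (\ref{item:thmiii}), and note that (i)--(iii) hold by construction with the Lipschitz bound $\mathbf{d}_{\bar\Omega}(\sigma_t,\sigma_s)\leq K_1|t-s|$ giving continuity of $\sigma$. The genuine divergence is the continuity of $V$: the paper dispatches it in one line by citing the external result \cite[Proposition 4.1]{cannarsaconstcalc} on the constrained value function for a time-dependent Lagrangian (made applicable through Remark \ref{remark:Ltol} with the frozen path $\nu_s=(e_s)_{\#}\eta$), whereas you prove it internally: lower semicontinuity in $x$ by compactness of the $\Gamma^K$-class of minimizers, upper semicontinuity in $x$ by Proposition \ref{prop:approximatingpathsinside} (the same mechanism as in Lemma \ref{lemma:Gammacetaisclosed}), and equicontinuity in $t$ to pass to joint continuity. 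Your route is more self-contained and reuses machinery already built in the paper, at the cost of two points that need to be checked rather than asserted: (a) your parenthetical claim that the estimates of Section \ref{section:smalldata} are monotone in the horizon length, so that the conclusion of Corollary \ref{cor:ifmonesmallenough} persists for the problems on $[t,T]$ appearing in (\ref{item:thmiii}); this is true (replacing $T$ by $T-t$ only decreases $\theta$ and $C_1$, while $m$, $\delta$ are horizon-independent), but it is an extra verification the paper avoids by outsourcing the whole continuity statement; and (b) the time direction: Proposition \ref{prop:approximatingpathsinside} is stated qualitatively (convergent sequences), so the claimed bound $|V(t,x)-V(s,x)|\leq C|t-s|$ requires extracting the linear modulus $(3M)^{L}\varepsilon$ from its proof, or a softer argument; note also that one of the two inequalities is much cheaper than re-anchoring, since concatenating the zero control on $[s,t]$ with an optimal control for $(t,x)$ is admissible in $A\Gamma_2^x$ and yields $V(s,x)\leq V(t,x)+n_1|t-s|$ directly from (L-\ref{L-i}). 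With these two points filled in, your argument is a valid, more elementary substitute for the paper's citation.
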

\begin{proof}
We apply Lemma \ref{lemma:existenceofcemfgcs} to obtain $K=(K_1,K_2)$ for a sufficiently small constant $c_1$ and a cemfgcs $\eta\in\mathscr{P}_{m_0}(\Gamma^K).$ Let $\sigma_t = (\pi^1\circ e_t)_{\#}\eta$ for all $t\in[0,T].$ The continuity of $\sigma_{\cdot}$ follows immediately. Then, define $V(t,x)$ as in (\ref{item:thmiii}); $V(t,x)< + \infty$ for all $(t,x)\in [0,T]\times\bar{\Omega}$ because, with the hypotheses, minimizing controls always exist for the problem on any interval $[t,T]$ and initial state $x(t)=x.$ The continuity of $V$ follows from \cite[Proposition 4.1]{cannarsaconstcalc}, by taking into account Remark \ref{remark:Ltol}, with $\nu_s = (e_s)_{\#}\eta,$ $0\leq s\leq T.$
\end{proof}
\section*{Appendix}

\paragraph{Proof of Lemma \ref{lem:charts}} Before proceeding with the proof, we remark that it is only the third condition that needs any proof.
	Indeed, let $\xi \in \partial \Omega$.
	Then by definition of a $\s{C}^3$ domain, there exist open sets $U \ni \xi,V \subset \bb{R}^d$ and a diffeomorphism $\phi : U \to V \subset \bb{R}^d$ such that $\phi$ and $\phi^{-1}$ are both of class $\s{C}^3$ and
	\begin{equation*}
		\phi (U \cap \overline{\Omega}) = V \cap (\bb{R}^{d-1} \times \intco{0,\infty})
		\quad \text{and} \quad
		\phi (U \cap \partial{\Omega}) = V \cap (\bb{R}^{d-1} \times \cbr{0}).
	\end{equation*}
	We will modify $\phi$, $U$, and $V$ to obtain a diffeomorphism $\psi$ satisfying the conditions above, in particular the third ``isometry'' property.
	Without loss of generality, we suppose $\phi$ and $\phi^{-1}$ are $\s{C}^3$ on the closures $\bar{U}$ and $\bar{V}$, respectively, and that $\bar{U}$ and $\bar{V}$ are compact.
	
	\firststep For each $x \in U \cap \partial\Omega$, we can use $\phi$ to define an inward pointing unit normal vector.
	Indeed, let $\phi^1,\ldots,\phi^d$ denote the vector components of $\phi$ and notice that $\phi^d$ is a $\s{C}^3$ function with $\phi^d(U \cap \partial \Omega) = 0$ and $\phi^d(U \cap \Omega) \subset (0,\infty)$, hence $\nabla \phi^d(x)$ is a vector normal to $\partial \Omega$ pointing toward the interior of $\Omega$.
	We thus define
	\begin{equation*}
		n(x) = \frac{\nabla \phi^d(x)}{|\nabla \phi^d(x)|}.
	\end{equation*}
	Because $\phi^d$ is $\s{C}^3$ on a compact set, it follows that $n$ is of class $\s{C}^2$.
	
	\nextstep For each $\varepsilon > 0$, we define
	\begin{equation*}
		U_\varepsilon = \cbr{x \in U : d(x,\partial\Omega) < \varepsilon}.
	\end{equation*}
	If $x \in U \cap \partial \Omega$, then $x + tn(x) \in U_\varepsilon$ for all $t \in (-\varepsilon,\varepsilon)$ and in that case $d(x + tn(x),\partial\Omega) \leq |t|$.
	In this step we prove that for $\varepsilon > 0$ small enough, we have that for all $x \in U \cap \partial \Omega$, $d (x + tn(x),\partial\Omega) = |t|$ and if $y \in U \cap \partial \Omega$, $y \neq x$, then $|x + tn(x) - y| > | t |$.
	
	Fix  $x \in U \cap \partial \Omega$ and let $\tilde x = \phi(x) \in \bb{R}^{d-1} \times \{0\}$.
	It suffices to show that for some $\varepsilon > 0$, for all $t \in (-\varepsilon,\varepsilon)$ and all $\tilde y \in V \cap (\bb{R}^{d-1} \times \{0\})$, $\tilde y \neq \tilde x$ we have
	\begin{equation*}
		| \phi^{-1}(\tilde x) + tn(x) - \phi^{-1}(\tilde y)|  > \abs{t}.
	\end{equation*}
	We use the fact that $D\phi^{-1}(\tilde x)(\tilde y - \tilde x)$ is tangent to $\partial \Omega$ at $x$ to derive
	\begin{equation*}
		\begin{split}
			|\phi^{-1}(\tilde x) + tn(x) - \phi^{-1}(\tilde y)|^2
			&= |\phi^{-1}(\tilde x) - \phi^{-1}(\tilde y)|^2
			+ 2 (\phi^{-1}(\tilde x) - \phi^{-1}(\tilde y)) \cdot tn(x)
			+ |tn(x)|^2
			\\
			&
			= |\phi^{-1}(\tilde x) - \phi^{-1}(\tilde y)|^2
			\\
			& \quad  \ - 2 (\phi^{-1}(\tilde y) - \phi^{-1}(\tilde x) - D\phi^{-1}(\tilde x)(\tilde y - \tilde x)) \cdot tn(x)
			+ t^2.
		\end{split}
	\end{equation*}
	We combine the estimates
	\begin{equation*}
		\abs{\tilde x - \tilde y} \leq \enVert{D\phi}_\infty |\phi^{-1}(\tilde x) - \phi^{-1}(\tilde y)|
	\end{equation*}
	and
	\begin{equation*}
		|\phi^{-1}(\tilde y) - \phi^{-1}(\tilde x) - D\phi^{-1}(\tilde x)(\tilde y - \tilde x)|
		\leq 
		\|D^2 \phi^{-1}\|_\infty |\tilde x - \tilde y|^2
	\end{equation*}
	to deduce
	\begin{equation*}
		|\phi^{-1}(\tilde x) + tn(x) - \phi^{-1}(\tilde y)|^2
		\geq 
		(\enVert{D\phi}_\infty^{-2} -2 t\|D^2 \phi^{-1}\|_\infty) |\tilde x - \tilde y|^2
		+ t^2.
	\end{equation*}
	Hence if we pick $\varepsilon > 0$ small enough so that $\|D\phi\|_\infty^{-2} - 2\epsilon\|D^2 \phi^{-1}\|_\infty > 0$, our claim follows.
	
	\nextstep Let $\varepsilon > 0$ be as small as in the previous step.
	For $x \in U_\varepsilon$, define $d(x)$ to be the signed distance from $x$ to $\partial\Omega$, i.e.~$d(x) = d(x,\partial \Omega)$ if $x \in \overline{\Omega}$ and $d(x) = -d(x,\partial \Omega)$ if $x \notin \overline{\Omega}$.
	We will define $P:U_\varepsilon \to \partial\Omega$ to be a (nonlinear) projection onto $\partial\Omega$ in the following way.
	For $x \in U_\varepsilon$, there exists $x_0 \in \partial\Omega$ such that $\abs{x - x_0} = d(x,\partial\Omega)$.
	By standard geometric arguments, $x - x_0$ is normal to $\partial\Omega$ at $x_0$, hence $x - x_0 = d(x)n(x_0)$.
	By the previous step, it follows that $x_0$ is the unique element of $\partial\Omega$ satisfying $\abs{x - x_0} = d(x,\partial\Omega)$; we define $P(x) = x_0$.
	Notice that for $x \in U_\varepsilon$, $x_0 \in \partial\Omega \cap U$, we have $x = x_0 + tn(x_0)$ if and only if $x_0 = P(x)$ and $t = d(x)$.
	
	\nextstep
	For $\varepsilon > 0$ as small as in the previous steps, define $\psi:U_\varepsilon \to \bb{R}^d$ by
	\begin{equation*}
		\psi(x) = (\phi(P(x)),d(x)).
	\end{equation*}
	Notice that $\psi$ has an inverse on its range, namely
	\begin{equation*}
		\psi^{-1}(y) = \psi^{-1}(y^1,\ldots,y^d)
		= \phi^{-1}(y^1,\ldots,y^{d-1},0) + y^d n (\phi^{-1}(y^1,\ldots,y^{d-1},0)).
	\end{equation*}
	We identify $D\psi^{-1}(y)$ with its Jacobian matrix as follows.
	Let $A(y)$ be the $d \times d$ matrix whose first $d-1$ rows are given by $\partial_i \phi(y^1,\ldots,y^{d-1},0)$ for $i =1,\ldots,d-1$ and whose $d$th row is given by $n\del{\phi^{-1}(y^1,\ldots,y^{d-1},0)}$.
	Let $B(y)$ be the $d \times d$ matrix whose first $d-1$ rows are given by $y^d Dn\del{\phi^{-1}(y^1,\ldots,y^{d-1},0)}\partial_i \phi(y^1,\ldots,y^{d-1},0)$ for  $i =1,\ldots,d-1$ and whose $d$th row is zero.
	Then we have $D\psi^{-1}(y) = A(y) + B(y)$.
	Both $A(y)$ and $B(y)$ are continuous with respect to $y$.
	Notice that the first $d-1$ rows of $A(y)$ form the tangent space to $\partial \Omega$ at $\psi^{-1}(\phi^{-1}(y^1,\ldots,y^{d-1},0))$, and thus the $d$th row, $n(\phi^{-1}(y^1,\ldots,y^{d-1},0))$, is orthogonal to them.
	In particular, $A(y)$ is invertible, and its inverse must be continuous with respect to $y$.
	On the other hand, for $y \in U_\varepsilon$ we see that the matrix norm of $B(y)$ is bounded by $\|Dn\|_\infty\|D\phi\|_\infty\varepsilon$.
	By choosing $\varepsilon$ smaller if necessary, we deduce that $D\psi^{-1}(y)$ is invertible for all $y \in V$ such that $|y^d| < \varepsilon$.
	
	By the inverse function theorem, we deduce $\psi$ is also differentiable and $D\psi(x)$ $= [D\psi^{-1}(\psi(x))]^{-1}$.
	Since $\psi^{-1}$ is in fact of class $\s{C}^2$ (because so are $\phi^{-1}$ and $n$), it follows that $\psi$ is as well.
	Finally, we see that condition 3 of the lemma is satisfied by construction of $\psi$. \qedsymbol 

\printbibliography

\end{document}